\documentclass[12pt]{amsart}
\usepackage[utf8]{inputenc}
\usepackage[foot]{amsaddr}
\usepackage{amsmath,amsfonts,amsthm,amssymb,verbatim,etoolbox,color}
\usepackage{enumitem} 
\usepackage{epigraph}
\usepackage{appendix}
\usepackage{xfrac}
\usepackage{mathtools}
\usepackage{flexisym}
\usepackage{bbm}
\usepackage{float}
\usepackage{graphicx}

\usepackage[margin=1in]{geometry}
\usepackage[
    backend=biber,
    maxnames=4,
    maxalphanames=4,
    style=alphabetic,
    backref
  ]{biblatex}
\usepackage{scalerel}

\usepackage{hyperref}
\hypersetup{
    colorlinks=true,
    linkcolor=red,
    filecolor=magenta,      
    urlcolor=magenta,
    pdftitle={Multiplicative subgroups of prime fields are not sumsets},
    pdfauthor = {Misha Rudnev and Fred Tyrrell},
    pdfpagemode=FullScreen,
    linktocpage = true,
    citecolor = blue,
    }

\usepackage{theoremref}

\newtheorem{thm}{Theorem}
\newtheorem{lemma}[thm]{Lemma}

\newtheorem{corollary}[thm]{Corollary}

\newtheorem{prop}[thm]{Proposition}
\theoremstyle{definition}
\newtheorem{defn}[thm]{Definition}
\newtheorem{remark}[thm]{Remark}

\numberwithin{thm}{section}

\newcommand{\F}{\mathbb{F}}

\newcommand{\sub}{\subseteq}

\newcommand{\bra}[1]{\left(#1\right)}
\newcommand{\sqbra}[1]{\left[#1\right]}

\newcommand{\abs}[1]{\left\lvert {#1} \right \rvert}

\newcommand{\al}{\alpha}

\newcommand{\ep}{\varepsilon}
\newcommand{\be}{\beta}

\newcommand{\ga}{\gamma}

\newcommand{\Bb}{B_b^{-1}}

\newcommand{\Ab}{\frac{1}{A+b}}

\def\bpm{\begin{pmatrix}}
\def\epm{\end{pmatrix}}

\title{Multiplicative subgroups of prime fields are not sumsets}
\author{Misha Rudnev}
\author{Fred Tyrrell}
\email{m.rudnev@bristol.ac.uk}
\email{fred.tyrrell@bristol.ac.uk}
\address{Fry Building, School of Mathematics, University of Bristol}
\date{\today}

\begin{document}
\begin{abstract}
Let $H \leq \mathbb{F}_p^*$ be a proper multiplicative subgroup, and suppose that $H = A+B$ for some $A,B \subseteq \mathbb{F}_p$. We prove that either one of the summands is a singleton, or $|A|=|B|=2$ and $|H|=4$. In particular, no proper multiplicative subgroup of $\mathbb{F}_p^*$ can be written as $A+B$ with $|A|,|B|>2$. 
\smallbreak
Our proof builds on the Hanson-Petridis polynomial method and Kalmynin's subsequent resolution of S\'ark\"ozy's conjecture for quadratic residues. Using Kalmynin's $|A|=|B|$ theorem as a structural input, we develop uniform combinatorial and arithmetic arguments which apply to multiplicative subgroups of arbitrary index.
\end{abstract}
\maketitle

\section{Introduction}
For sets $A,B\sub\F_p$, their sumset is defined as 
\[A+B=\{a+b:a\in A, b\in B\}.\] 
Motivated by Ostmann's inverse Goldbach problem and related questions about additive decompositions of the squares, S\'ark\"ozy \cite{sarkozy2012additive} asked whether the set of quadratic residues
\[\mathcal R_p=\{x^2:x\in\F_p^*\}\]
can admit a non-trivial additive decomposition $\mathcal R_p=A+B$, with $\abs{A},\abs{B}\geq2$.
\smallbreak
A substantial body of work has been devoted to S\'ark\"ozy's original conjecture. Shkredov \cite{shkredov2014sumsets} settled the symmetric case $A=B$, classifying decompositions of the form $\mathcal R_p=A+A$, together with the corresponding restricted-sumset problem. Blackburn, Konyagin and Shparlinski \cite{blackburn2015counting} obtained a non-trivial upper bound on the number of possible decompositions of $\mathcal R_p$, while Chen and Xi \cite{chen2022conjecture} sharpened the known restrictions on the sizes of the summands and on the number of uniquely represented elements.
\smallbreak
Hanson and Petridis \cite{Hanson2021refined} made a breakthrough by developing a Stepanov-type auxiliary-polynomial method for sumsets contained in sets of roots of unity. In the extremal situation arising from an exact decomposition $\mathcal R_p=A+B$, their method produces a polynomial factorisation which imposes strong restrictions on the summands. In particular, they established S\'ark\"ozy's conjecture for almost all primes. Kalmynin \cite{kalmynin2025additive} further developed the Hanson-Petridis method and resolved S\'ark\"ozy's conjecture for every prime. He also proved an important structural result for additive decompositions of arbitrary multiplicative subgroups, which we state below as \thref{kalmyninsize}.
\smallbreak
Shparlinski \cite{shparlinski2013additive} initiated the systematic study of additive decompositions of arbitrary multiplicative subgroups $H\leq\F_p^*$, often referred to as the \emph{generalised S\'ark\"ozy conjecture}. Shparlinski obtained quantitative restrictions on the possible sizes of the summands in such a decomposition. Shkredov \cite{shkredov2020any} used state-of-the-art incidence theory to show that, for every $\ep>0$, a multiplicative subgroup satisfying 
\[1\ll_\ep\abs{H}\leq p^{2/3-\ep}\]
cannot be represented as $A+B$ with $\abs{A},\abs{B}>1$.
\smallbreak
Yip \cite{yip2024additive} used the Hanson-Petridis method to obtain further irreducibility results for several families of large subgroups over general finite fields, together with results concerning decompositions of the form $A+A$ and $A+B+C$.
\smallbreak
Several related variants and multiplicative analogues have also been investigated. Wu and She \cite{wu2023additive} studied additive decompositions of the set of nonzero cubes, while Yip \cite{yip2025restricted} established restricted-sumset analogues of S\'ark\"ozy's conjecture and more general results for restricted sumsets contained in multiplicative subgroups. In a complementary direction, Kim, Yip and Yoo \cite{kim2023diophantine,kim2026multiplicative} studied multiplicative decompositions of nonzero shifts of multiplicative subgroups. Among their results, they rule out representations of the nonzero part of such a shift as a ratio set $A/A$. They note that, although their arguments are motivated by Kalmynin's method, their resulting proofs are simpler. In \thref{rem:yip}, we indicate a connection between part of their argument and the first-order reciprocal relation appearing in Kalmynin's proof of \thref{kalmyninsize}.
\smallbreak
Since the first version of this paper appeared, several closely related preprints have been posted. Yip and Yoo \cite{yip2026multiplicative} obtained a complete classification of multiplicative subgroups which are not restricted sumsets, Yoo \cite{yoo2026multiplicative} studied multiplicative decompositions of shifted multiplicative subgroups, and Cochrane \cite{cochrane2026sumsets} studied when multiplicative subgroups of general finite fields are a generalised arithmetic progression. Yip and Yoo \cite{yip2026additive} subsequently gave another proof of \thref{main,kalmyninsize}, based on differential identities for the root polynomials associated with the two summands.

\smallbreak
Despite the resolution of S\'ark\"ozy's original conjecture and the substantial progress for numerous families of multiplicative subgroups, the binary additive decomposition problem remained open for an arbitrary proper multiplicative subgroup of a prime field. Our main result gives a complete classification of all such decompositions.

\begin{thm}\thlabel{main}
Let $p$ be a prime, and let $H \leq \F_p^*$ be a proper multiplicative subgroup. Suppose that
\[H=A+B\]
for some $A, B \sub \F_p$. Then one of the following holds:
\begin{enumerate}
    \item $\abs{A}=1$ or $\abs{B}=1$,
    \item $\abs{A}=\abs{B}=2$, $\abs{H}=4$ and $p \equiv 1 \pmod 4$.
\end{enumerate}
In particular, no proper multiplicative subgroup $H$ of $\F_p^*$ can be written as $H=A+B$ with $\abs{A},\abs{B}>2$.
\end{thm}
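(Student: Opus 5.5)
We use Kalmynin's result, \thref{kalmyninsize}, as the structural input. We take it to say the following: if $H=A+B$ with $\abs{A},\abs{B}\ge 2$, then $\abs{A}=\abs{B}=:k$. Moreover, every element of $H$ has a unique representation $a+b$, so that $\abs{H}=k^2$. This exact-factorisation regime is the one in which the Hanson--Petridis polynomial identity is tight. Write $d=(p-1)/\abs{H}\ge 2$ for the index and $n=\abs{H}=k^2$. The plan is to derive a contradiction whenever $k\ge 3$. In the case $k=2$ we will check directly that $p\equiv 1\pmod 4$.

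\textbf{Step 1 (polynomial identity).} Every $a+b$ lies in $H$, so $(x+b)^n=1$ for all $x\in A$ and $b\in B$. Let $f_A(x)=\prod_{a\in A}(x-a)$ and $f_B(x)=\prod_{b\in B}(x-b)$. Since $A+B=H$ with unique representation, we get the factorisation
\[ \prod_{b\in B}f_A(x-b)=x^n-1. \]
The left side is the polynomial whose roots are the elements $a+b$, each with multiplicity one. Exchanging the roles of $A$ and $B$ gives the same identity. We then take logarithmic derivatives and compare coefficients of $x^{n-j}$ for small $j$. Since $\deg(x^n-1)=n=k^2$ and the power sums of $H$ vanish below order $n$, the power sums satisfy
\[ \sum_{a,b}(a+b)^j=0\qquad (1\le j<n). \]
We expand each of these binomially into power sums $S_i(A)=\sum_{a} a^i$ and $S_i(B)$. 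For $j<p$ this gives $\sum_{i}\binom{j}{i}S_i(A)S_{j-i}(B)=0$.

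\textbf{Step 2 (normalisation and low-order moments).} We translate $A\mapsto A+t$, $B\mapsto B-t$ so that $S_1(A)=0$. From $j=1$ we get $kS_1(B)=0$, so $S_1(B)=0$ as well, since $k<p$. The case $j=2$ then gives $k(S_2(A)+S_2(B))=0$. Continuing, we obtain for each $j<n$ a triangular system on the power sums. It yields $S_j(A)=-S_j(B)$ for $j$ up to the point where the cross terms appear; $j=4$ brings in $6S_2(A)S_2(B)$. We aim to show that $S_j(A)=S_j(B)=0$ for all $1\le j\le 2k-1$, or some similar range. The sets have size $k$, so Newton's identities then force $f_A(x)=x^k-\alpha$ and $f_B(x)=x^k-\beta$. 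That is, $A$ and $B$ are cosets of the group $\mu_k$ of $k$th roots of unity, which requires $k\mid p-1$. We expect a cleaner route via Kalmynin's reciprocal relation: differentiating the identity and evaluating at $x=a$ gives
\[ \sum_{b}\frac{1}{a'-a+b}\ \text{-type relations}, \]
and these pin down the elementary symmetric functions.

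\textbf{Step 3 (arithmetic contradiction).} With $A=\alpha'\mu_k$ and $B=\beta'\mu_k$, the sumset $A+B$ is a union of scaled copies of $\mu_k$-orbits. It has size $k^2$ and must be a multiplicative group of order $k^2$. We plug $x^k-\alpha$ into the identity $\prod_{b}f_A(x-b)=x^{k^2}-1$ and compare the coefficient of $x^{k^2-k}$ (and of $x^{k^2-2k}$). Only $b^k$ terms survive in these coefficients. This gives $k\alpha$ plus binomial contributions $\sum_b\binom{k}{k}(-b)^k=-k\beta$, which must vanish. At the next order we get binomial terms like $\binom{k}{2}$ times power sums of $B$ that do not vanish when $k\ge3$. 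This yields $\binom{k}{i}$-type obstructions mod $p$, and these are impossible since $k<p$; the index condition $d\ge 2$ gives $k^2<p$. For $k=2$ the identity becomes solvable, and it forces $H=\mu_4$. Requiring $\mu_4\subseteq\F_p^*$ gives $p\equiv1\pmod4$.

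\textbf{Main obstacle.} The main obstacle is Step 2, showing that the vanishing moments force $A$ and $B$ to be cosets of $\mu_k$. The moment equations couple $A$ and $B$ nonlinearly from $j=4$ onward. The plan is to use the second factorisation (with roles swapped) and the reciprocal identity from Kalmynin's proof to decouple them, working only with moments of order below $p$, which is available since $k^2<p$.
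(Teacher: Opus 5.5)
\textbf{Your proposal has a genuine gap: Step 2 contains no argument, and it is the whole content of the theorem.}

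After you normalise $S_1(A)=0$, the moment relations from Step 1 only tie the two families together. Let $j_0$ be the first index with $S_{j_0}(A)\neq 0$. The moments give $S_j(B)=-S_j(A)$ for $j<2j_0$, and from $j=2j_0$ onwards they couple $A$ and $B$ nonlinearly. Nothing in your plan forces any $S_j(A)$ to vanish or excludes $j_0=2$; excluding every possible $j_0$ is exactly what must be proved.

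The stated target is also inconsistent. If $S_j(A)=0$ for all $1\le j\le 2|A|-1$, Newton's identities give $f_A(x)=x^{|A|}$, which is an immediate contradiction, not $x^{|A|}-\alpha$. The coherent target would be $j_0=|A|$, i.e.\ $A$ and $B$ are cosets of $\mu_{|A|}$. The paper never proves this and does not need it.

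The paper gets the missing information from the Hanson--Petridis identity applied to the reciprocal sets $\frac{1}{A+b}$ with $T=-B_b^{-1}$. This is legitimate because $\frac{1}{a+b}+\frac{1}{b'-b}=\frac{1}{b'-b}\cdot\frac{a+b'}{a+b}\in\frac{1}{b'-b}H$. Comparing the $x^{|H|-2}$ and $x^{|H|-3}$ coefficients gives first- and second-order transfer identities. From these the paper shows that $j_0$ is even and that $P(|A|,j_0)\equiv 0\pmod p$ for an explicit polynomial $P$ that is quadratic in both variables. A root-uniqueness argument for $P$ then yields only $j_0\mid |A|$. The final contradiction is arithmetic: $P(|A|,j_0)$ cannot be divisible by $p=M|A|^2+1$ with $M\ge 2$. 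Your appeal to ``Kalmynin's reciprocal relation'' points toward the first of these steps but supplies none of the identities, and the zeroth-order moments cannot replace them.

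\textbf{Step 3 is not valid as written either, even granting the coset structure.} Suppose $A=c\mu_s$ and $B=c'\mu_s$ with $s=|A|\ge 3$. The $j=s$ moment gives $c'^s=-c^s$. The $j=2s$ moment gives $s^2c^{2s}\bigl(2-\binom{2s}{s}\bigr)=0$, i.e.\ $p\mid \binom{2s}{s}-2$. This is not ruled out by $s^2<p$, since $\binom{2s}{s}$ is exponentially larger than $p$. So ``$\binom{k}{i}$-type obstructions are impossible since $k<p$'' does not follow.

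For odd $s$ there is an easy repair: $(-c'/c)^s=1$, so $-c'\in A$ and hence $0\in A+B$, which is impossible since $0\notin H$. For even $s$ a genuine argument would still be required.
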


Our starting point is the following theorem of Kalmynin. Building on the work of Hanson and Petridis \cite{Hanson2021refined}, who showed that if $H$ admits an additive decomposition, then each element of $H$ is represented uniquely as a sum in $A+B$, Kalmynin proved that the two summands must have the same size.

\begin{thm}[\cite{kalmynin2025additive}, Theorem 2]\thlabel{kalmyninsize}
Let $p$ be a prime, let $H \leq \F_p^*$ be a proper multiplicative subgroup, and suppose that
\[H=A+B\]
for some $A,B\sub \F_p$ and $\abs{A},\abs{B}\geq 2$. Then
\[\abs{A}=\abs{B}=\sqrt{\abs{H}}.\]
\end{thm}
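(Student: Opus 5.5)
We aim to prove the statement via the Hanson--Petridis auxiliary-polynomial method, in the form Kalmynin developed. Write $|H|=d$, so $p-1=dk$ with $k\geq 2$, and $H$ is the zero set of $x^d-1$. Put $|A|=m$ and $|B|=n$, both at least $2$. The first step is the Hanson--Petridis input: from $H=A+B$ and a Stepanov-type polynomial count one gets $mn\leq d+$(lower order). Combined with $|A+B|=d\le mn$, this forces $mn=d$, so every $h\in H$ is represented exactly once as $a+b$. Hence the polynomial identity
\[
\prod_{a\in A}\prod_{b\in B}(x-a-b)=x^{d}-1
\]
holds after shifting: for each fixed $b$, the translate $A+b$ lies in $H$. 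Equivalently, $(a+b)^d=1$ for all $a\in A$, $b\in B$.

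The second step is to extract symmetric relations between $m$ and $n$. Consider $f_A(x)=\prod_{a\in A}(x-a)$ and $f_B(x)=\prod_{b\in B}(x-b)$. Unique representation gives
\[
\prod_{b\in B} f_A(x-b)=x^d-1 .
\]
Compare the coefficients of $x^{d-1},x^{d-2},\dots$ on both sides. The right side has vanishing coefficients down to degree $1$. Using power sums $S_j(A)=\sum_a a^j$ and $S_j(B)$, the coefficient of $x^{d-j}$ is a polynomial in the $S_i(A)$, $S_i(B)$, and it must vanish for $1\le j<d$. Expanding $\sum_{a,b}(a+b)^j=\sum_i\binom ji S_i(A)S_{j-i}(B)$, this yields $\sum_{a,b}(a+b)^j=0$ for $1\le j<d$ with $p\nmid j$ where relevant. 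By symmetry we may translate so that $S_1(A)=S_1(B)=0$. Newton's identities then link the power sums of $A$, of $B$, and the binomial convolutions with the counts $m,n$, which enter through $S_0(A)=m$ and $S_0(B)=n$.

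The third step, following Kalmynin, is to apply a derivative or reciprocal trick. Apply the identity with $x$ replaced by $x+a_0$ for $a_0\in A$, and use $H$-invariance under the substitution $x\mapsto hx$: for $h\in H$, $hA+hB=H$ as well. Then compare the lowest-order terms of $f_A$ and $f_B$ around a point. The aim is the relation
\[
m\,S_j(B)=n\,S_j(A)\quad\text{(up to sign)},
\]
taken at the first index $j$ where the power sums are nonzero. Substituting this into the vanishing convolution at index $2j$ gives $m=n$, and then $m^2=d$.

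The main obstacle is the third step. Showing that the first nonvanishing power sums are not both zero at all orders up to $d$ needs an argument that $|A|<p$ prevents all $S_j(A)$ with $j<m$ from vanishing simultaneously. This likely comes from Newton's identities, since $m<p$. Turning that into the clean symmetric relation is where I expect the real work to lie. I would first try the first-order reciprocal relation: compare $\sum_b 1/(x-b)$ against $f_A'/f_A$ via the logarithmic derivative of $\prod_b f_A(x-b)=x^d-1$.
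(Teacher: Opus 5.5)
Your Steps 1 and 2 are sound, provided you use the Hanson--Petridis bound in its exact form $\abs{A}\abs{B}\le d$ for $A+B\subseteq\mu_d$; a ``lower order'' error term would not give $mn=d$. The opposite translation $A-t$, $B+t$ then normalises $S_1(A)=S_1(B)=0$. The gap is Step 3, and the route you sketch there cannot close.

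\begin{itemize}
\item Let $k$ be the least index with $S_k(A)\neq0$. The convolution identities of Step 2 in degrees $j\le k$ already give $nS_j(A)+mS_j(B)=0$. So the relation you aim for, $mS_k(B)=-nS_k(A)$, is just the zeroth-order relation, and it holds whatever $m$ and $n$ are.
\item Substituting it into the degree-$2k$ convolution (for $2k<d$) gives
\[nS_{2k}(A)+mS_{2k}(B)+\binom{2k}{k}S_k(A)S_k(B)=0.\]
This introduces the new unknowns $S_{2k}(A)$ and $S_{2k}(B)$ and says nothing about $m-n$.
\item Your other devices are restatements of the same identities. The logarithmic derivative of $\prod_b f_A(x-b)=x^d-1$ is just $\sum_{a,b}(x-a-b)^{-1}=\sum_{h\in\mu_d}(x-h)^{-1}$. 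The maps $x\mapsto x+a_0$ and $(A,B)\mapsto(hA,hB)$ merely translate or rescale the power sums.
\end{itemize}

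What is missing is a relation for each \emph{fixed} $b$ that uses the exact factorisation of the Hanson--Petridis polynomial, not only the count $mn\le d$ extracted from it. Since $\frac1{A+b}\subseteq\mu_d$ and $\frac1{a+b}+\frac1{b'-b}=\frac1{b'-b}\cdot\frac{a+b'}{a+b}$, \thref{modifiedHP} applies with $S=\frac1{A+b}$ (size $m$), $T=-B_b^{-1}$ (size $n-1$) and $d=mn$. Comparing $x^{d-2}$ coefficients gives
\[\sum_{a\in A}\frac1{a+b}=\frac{m(m+1)}{d-1}\sum_{b'\neq b}\frac1{b-b'},\]
and symmetrically with $A$ and $B$ interchanged.

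Now multiply by $b^{k+1}$ (respectively $a^{k+1}$) and sum. Use
\[\sum_{t,t'\in T,\ t\neq t'}\frac{t^{k+1}}{t-t'}=\bra{\abs{T}-\tfrac{k+1}{2}}S_k(T)\]
for $T\in\{A,B\}$ (\thref{firstordersymmetrisation}), divide $a^{k+1}\pm b^{k+1}$ by $a+b$ exactly as in the proof of \thref{keven}, and use $mS_k(B)=-nS_k(A)\neq0$. This gives:
\begin{itemize}
\item $(m+n+2)(k-1)\equiv0\pmod p$ if $k$ is odd, which is impossible since $2\le k\le\min(m,n)$ and $m+n+2\le d+2<p$;
\item $(m-n)(k+3)\equiv0\pmod p$ if $k$ is even, whence $m=n$ and $m^2=d$.
\end{itemize}

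This first-order reciprocal relation is what the paper attributes to Kalmynin's proof; the paper cites rather than reproves \thref{kalmyninsize}. Your proposal gestures towards such a relation at the end, but the logarithmic derivative cannot produce it, and without it the argument does not reach $\abs{A}=\abs{B}$.
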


\begin{remark}\thlabel{trivial}
The trivial decomposition $H = \{x\} + (H-x)$, where one of $A,B$ is a translate of a subgroup and the other is a singleton, is always possible, which is case $(1)$ of \thref{main}. 
\smallbreak
In case $(2)$, if $\abs{A}=\abs{B}=2$, then by \thref{kalmyninsize} we must have $\abs{H}=4$, and so $4 \mid p-1$ by Lagrange's theorem, which is equivalent to $p \equiv 1 \pmod 4$. Since $p \equiv 1 \pmod 4$, there is $i \in \F_p$ such that $i^2=-1$. Thus setting $A = \{0,-1-i\}$ and $B = \{1,i\}$, we have
\[A+B = \{1,-1,i,-i\},\]
which is a multiplicative subgroup of $\F_p^*$, the fourth roots of unity mod $p$. 
\end{remark}

\subsection{Sketch of the proof}
We give a sketch of the proof here, and explain how the paper is organised.
\smallbreak
Suppose that $H = A+B$ with $\abs{A},\abs{B} > 2$. Write $H = \mu_d$, where $\mu_d$ denotes the $d$-th roots of unity mod $p$. By \thref{kalmyninsize} we have
\[\abs{A}=\abs{B}=\al, \qquad d = \al^2.\]
The subsequent argument is self-contained, although some parts are inspired by Kalmynin's proof of S\'ark\"ozy's conjecture for quadratic residues. The essential new difficulty is that, in Kalmynin's setting, one has the special relation $p=2\alpha^2+1$, whereas for an arbitrary multiplicative subgroup we must treat $p=M\alpha^2+1$ for every $M \geq 2$. Our main task is therefore to derive a polynomial congruence which is valid uniformly in $M$ and then to analyse its solutions without using the special assumption $M=2$.
\smallbreak
We begin in Section \ref{section3} with the reciprocal identities which underlie the argument. These correspond to the two relations obtained by Kalmynin in his proof of S\'ark\"ozy's conjecture. We give a somewhat different derivation, starting from a modified form of the Hanson-Petridis polynomial identity. For fixed $b \in B$, this is applied to the reciprocal sets 
\[\frac{1}{A+b}=\left\{\frac{1}{a+b}:a\in A\right\}, \qquad B_b^{-1}=\left\{\frac{1}{b-b'}:b'\in B\setminus\{b\}\right\}\]
to obtain a polynomial identity in which the left-hand side is a sum over $A$, and the right-hand side is a product over $B$, allowing us to transfer symmetric information between these two sets. Comparing the first two non-trivial coefficients gives what we refer to as the \emph{first-} and \emph{second-order} reciprocal transfer identities.
\smallbreak
We next study the least nonzero power sum. After translating the summands in opposite directions, we may assume that $p_1\bra{A}=0$. Let $k$ be the least positive integer such that
\[p_k\bra{A}=\sum_{a\in A}a^k\neq0.\]
Kalmynin introduced the same parameter in his proof of S\'ark\"ozy's conjecture. He showed that $k$ is also the least positive index for which $p_k\bra{B}\neq0$, that 
\[p_k\bra{A}=-p_k\bra{B},\]
and that $k$ is even. In Section \ref{section4}, we give elementary proofs of these facts in our setting. The first two assertions follow directly from the uniqueness of representations in $A+B=\mu_d$, while the parity of $k$ follows from the first-order reciprocal transfer identity and a symmetrisation argument.

\smallbreak
The main algebraic calculation is carried out in Section \ref{section5}. We introduce several sums involving two reciprocal factors and evaluate them using partial fractions, complete homogeneous symmetric polynomials, and symmetric and antisymmetric combinations. Combining these identities with the second-order reciprocal transfer identity gives the polynomial congruence
\[P\bra{\alpha,k}\equiv0\pmod p,\] 
where 
\[P\bra{\alpha,x} =3\bra{x^2-3x-2}\alpha^2 +4\bra{x^2+2}\alpha -\bra{x^2-3x+2}.\] 
The derivation is entirely algebraic and combinatorial and is valid for multiplicative subgroups of arbitrary index. In the special case of quadratic residues, where $p=2\alpha^2+1$, this congruence reduces to the polynomial relation obtained by Kalmynin in his proof of S{\'a}rk{\"o}zy's conjecture. His derivation uses differential forms and residue calculations, whereas our symmetric and antisymmetric framework produces the uniform relation directly.
\smallbreak
In Section \ref{section6}, we prove that $k\mid\alpha$. The first and last steps of this argument also appear in Kalmynin's treatment of quadratic residues. Assume for a contradiction that there is some $m\leq\alpha$ such that $k\nmid m$ and $p_m\bra{A}\neq0$, and choose the least such $m$. Its minimality implies that every product of lower power sums of total degree $m$ vanishes, and an argument analogous to the proof for $k$ shows that $m$ is even. We may therefore repeat the calculation of Section \ref{section5} with $m$ in place of $k$, obtaining 
\[P\bra{\alpha,m}\equiv0\pmod p.\] 
The main new ingredient in this section is a root-uniqueness result for the uniform polynomial $P$. We prove that, whenever $p=M\alpha^2+1$ is prime, the congruence 
\[P\bra{\alpha,x}\equiv0\pmod p\] 
has at most one solution with $1\leq x\leq\alpha$. Since $k$ is already such a solution, the integer $m$ cannot exist. It follows that 
\[p_i\bra{A}=p_i\bra{B}=0\] 
for every $1\leq i\leq\alpha$ with $k\nmid i$. Finally, choosing $S\in\{A,B\}$ such that $0\notin S$ and applying Newton's identities, we obtain $e_i\bra{S}=0$ whenever $k\nmid i$. Since 
\[e_\alpha\bra{S}=\prod_{s\in S}s\neq0,\] 
we conclude that $k\mid\alpha$.
\smallbreak
The proof is completed in Section \ref{section7} by ruling out the congruence 
\[P\bra{\alpha,k}\equiv0\pmod p\] 
under the conditions already established, namely that $k$ is even and $k\mid\alpha$. Writing 
\[k=2K,\qquad \alpha=2Kn,\qquad p=M\alpha^2+1,\] 
we combine the assumption $p\mid P\bra{\alpha,k}$ with the identity $p=M\al^2+1$. After reducing the resulting expression modulo $\alpha^2$, we obtain an auxiliary positive integer $\ell$ and show that one of the following alternatives must hold: 
\[\ell n=4K, \qquad \text{or} \qquad n=1,\qquad \ell=4K-1.\] 
Elementary divisibility arguments and short finite checks reduce both alternatives to a finite collection of possibilities, all of which are excluded. Consequently, 
\[P\bra{\alpha,k}\not\equiv0\pmod p,\] contradicting the congruence obtained in Section \ref{section5} and completing the proof of \thref{main}.

\section*{Acknowledgements}
We thank Ben Hobson for interesting discussions during the early stages of this project while a Martingale scholar at Bristol. We also thank Bogdan Nica for informing us that \thref{homo} is a folklore identity of Sylvester.
\smallbreak
ChatGPT, accessed via a ChatGPT Plus subscription and using the GPT-5.4, GPT-5.5 and GPT-5.6 models, was used in the preparation of this manuscript for algebraic and numerical checks, generation of code, assistance with presentation, and proofreading. All mathematical content is the sole responsibility of the authors.
\smallbreak
The second author is funded by the Heilbronn Institute for Mathematical Research.

\section{Preliminaries}\label{section2}
In this section, we introduce the preliminary definitions and facts necessary to discuss the \emph{Hanson-Petridis} polynomial introduced in \cite{Hanson2021refined}, which forms the basis of the subsequent work of Kalmynin \cite{kalmynin2025additive}. We also record other necessary preliminary results, and fix notation. 
\smallbreak
Throughout the paper, we let $A, B \sub \F_p$ with $\abs{A}=\al$ and $\abs{B}=\be$. We denote the $d$-th roots of unity mod $p$ by $\mu_d=\{x \in \F_p:x^d=1\}$. We write $\F_p^*$ to denote the multiplicative group of $\F_p$, which is a cyclic group of order $p-1$. If $H$ is a subgroup of $\F_p^*$ then in fact $H = \mu_d$, the $d$-th roots of unity mod $p$, for some $d \mid p-1$.
\smallbreak
We also define the homogeneous and elementary polynomials, power sums, and record some important identities.
\begin{defn}\thlabel{homoelem}
    For a set $S$ of size $\al$ and $m \geq 0$, we define the degree $m$ complete homogeneous polynomial over $S = \{s_1,\ldots,s_{\al}\}$ as
        \[h_m\bra{S} := \sum\limits_{1 \leq i_1 \leq \cdots \leq i_m \leq \al}s_{i_1}\cdots s_{i_m}.\]
        Similarly, we define the degree $m$ elementary symmetric polynomial over $S$ as
            \[e_m\bra{S} := \sum\limits_{1 \leq i_1 < \cdots < i_m \leq \al}s_{i_1}\cdots s_{i_m}.\]
            Finally, we define the $m$-th power sum of $S$ as
            \[p_m\bra{S}:=\sum\limits_{s \in S}s^m.\]
\end{defn}
We will use the following standard identities about symmetric and elementary polynomials in relation to the power sums, known as Newton's identities.
\begin{lemma}\thlabel{newton}
For a finite set $S$,
\begin{align*}
h_1(S) &= p_1(S),\\
h_2\bra{S} &=\frac{1}{2}(p_1(S)^2 + p_2(S)),\\
h_3(S) &= \frac{1}{6} (p_1(S)^3 + 3p_1(S)p_2(S) + 2p_3(S)), \\
e_1(S) &= p_1(S),\\
e_2(S) &= \frac{1}{2}(p_1(S)^2 - p_2(S)),\\
e_3(S) &= \frac{1}{6}(p_1(S)^3 - 3p_1(S)p_2(S) + 2p_3(S)).  
\end{align*}
More generally, for any $1 \leq j \leq \abs{S}$,
\[j e_j\bra{S}=\sum_{i=1}^j (-1)^{i-1}e_{j-i}\bra{S}p_i(S)\]
\end{lemma}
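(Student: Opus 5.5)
The statement to prove is Newton's identities (\thref{newton}), so I will work with generating functions over $\F_p$, or over any commutative ring in which the needed small integers are invertible. For $S=\{s_1,\ldots,s_\al\}$ set
\[E(t)=\prod_{i=1}^{\al}(1+s_i t)=\sum_{j\ge0}e_j(S)t^j,\qquad H(t)=\prod_{i=1}^{\al}\frac{1}{1-s_i t}=\sum_{m\ge0}h_m(S)t^m .\]
Both are identities of formal power series. The first comes from expanding the product and choosing distinct indices. The second comes from multiplying geometric series, since a monomial $s_{i_1}\cdots s_{i_m}$ with $i_1\le\cdots\le i_m$ arises exactly once.

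The general recursion comes from logarithmic differentiation of $E$. We have
\[t\,\frac{E'(t)}{E(t)}=\sum_{i=1}^{\al}\frac{s_i t}{1+s_i t}=\sum_{i}\sum_{r\ge1}(-1)^{r-1}s_i^r t^r=\sum_{r\ge1}(-1)^{r-1}p_r(S)t^r.\]
This gives
\[\sum_j j e_j t^j=\Bigl(\sum_j e_j t^j\Bigr)\Bigl(\sum_{r\ge1}(-1)^{r-1}p_r t^r\Bigr).\]
Comparing the coefficients of $t^j$ for $1\le j\le\al$ yields exactly
\[je_j(S)=\sum_{i=1}^{j}(-1)^{i-1}e_{j-i}(S)p_i(S).\]
To stay purely formal and avoid dividing by $E$, I will instead differentiate the product directly: $tE'(t)=\sum_i s_it\prod_{l\ne i}(1+s_lt)$. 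Multiplying the series identity through by $E(t)$ then makes sense over any commutative ring.

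For $h$, the same computation with $tH'/H=\sum_i s_it/(1-s_it)=\sum_{r\ge1}p_rt^r$ gives
\[mh_m=\sum_{i=1}^{m}h_{m-i}p_i.\]
The explicit low-degree formulas follow by iterating the two recursions. For $e$:
\[e_1=p_1,\qquad 2e_2=e_1p_1-p_2=p_1^2-p_2,\]
\[3e_3=e_2p_1-e_1p_2+p_3=\tfrac12(p_1^2-p_2)p_1-p_1p_2+p_3,\]
which gives $e_3=\tfrac16(p_1^3-3p_1p_2+2p_3)$. For $h$, the recursion with all plus signs gives
\[h_2=\tfrac12(p_1^2+p_2),\qquad h_3=\tfrac16(p_1^3+3p_1p_2+2p_3).\]

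The only real subtlety is characteristic. The displayed closed forms divide by $2$ and $6$, so they are valid in $\F_p$ only for $p>3$; for $p\le3$ the recursions remain true but the formulas are meaningless. This is harmless here: a proper subgroup $H$ with $|A|,|B|\ge2$ forces $|H|\ge4$, hence $p\ge5$. When $|S|<3$, the degree $m$ polynomials are still defined by the same sums, with $e_j=0$ for $j>|S|$, and the generating-function identities cover this automatically. I do not expect any step to be a genuine obstacle; the main care is justifying the coefficient comparison formally rather than analytically.
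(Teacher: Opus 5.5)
Your proof is correct. The paper itself gives no proof of \thref{newton}: it quotes these as the standard Newton identities and moves on. Your generating-function derivation is the usual self-contained route. You compare coefficients in $tE'(t)=E(t)\sum_{r\ge1}(-1)^{r-1}p_r(S)t^r$ and $tH'(t)=H(t)\sum_{r\ge1}p_r(S)t^r$ in $\F_p[[t]]$. This works over any commutative ring and gives the recursion for every $j\ge1$, with $e_j(S)=0$ for $j>\abs{S}$. Your iterated computations of $e_2,e_3,h_2,h_3$ are right. You also derive the companion recursion $mh_m=\sum_{i=1}^m h_{m-i}p_i$, which the paper never states but implicitly needs for its $h_2,h_3$ formulas.

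What your version adds over the paper's bare citation is the characteristic bookkeeping. The recursion $je_j=\sum_i(-1)^{i-1}e_{j-i}p_i$ holds in every characteristic. The closed forms need $2$ and $3$ to be invertible, which is harmless here because the standing assumptions give $p=M\al^2+1\ge 19$. When the paper divides by $j$ (for example in the proof that $k\mid\al$), it uses $j\le\al<p$. One small simplification: you do not need to avoid dividing by $E(t)$. It has constant term $1$, so it is a unit in $\F_p[[t]]$ and the logarithmic derivative is already purely formal.
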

\begin{remark}
For a set $S \sub \F_p$, knowing the values of $e_1\bra{S},\ldots,e_{\abs{S}}\bra{S}$ completely determines $S$, as they are precisely the coefficients of the characteristic polynomial for $S$,
\[\prod_{s \in S}\bra{X-s}.\]
If $\abs{S} < p$, then by \thref{newton} the values of $p_1\bra{S},\ldots,p_{\abs{S}}\bra{S}$ also determine $S$.
\end{remark}

\section{Reciprocal transfer identities}\label{section3}

In this section, we prove a modified version of the Hanson-Petridis identity in \cite{Hanson2021refined}, and then use this identity to study reciprocal translates of the form $\frac{1}{A+b}$. First, we need to introduce the Hanson-Petridis coefficients.

\begin{defn}\thlabel{coeffs}
For $S \sub \F_p$ and $s \in S$, we define the coefficient $c_S(s)$ by
\[\sum\limits_{s \in S}s^jc_S(s) = 
\begin{cases}0, & 0 \leq j \leq \abs{S}-2,\\
1, & j=\abs{S}-1.
\end{cases}\]
\end{defn}
We can write \thref{coeffs} using a Vandermonde matrix as follows,
\[\begin{pmatrix}
1 & 1 & \cdots & 1\\
s_1 & s_2 & \cdots & s_{\abs{S}} \\
\vdots & \vdots & \ddots & \vdots\\
s_1^{\abs{S}-1} & s_2^{\abs{S}-1} & \cdots & s_{\abs{S}}^{\abs{S}-1}
\end{pmatrix}
\begin{pmatrix}
    c_S\bra{s_1}\\
    c_S\bra{s_2}\\
    \vdots \\
    c_S\bra{s_{\abs{S}}}
\end{pmatrix}
= \begin{pmatrix}
    0 \\
    \vdots\\
    0\\
    1
\end{pmatrix}.
\]
We have the following explicit formula for the Hanson-Petridis coefficients in \thref{coeffs}. 
\begin{lemma}\thlabel{coeffs2}
Let $S\sub\F_p$ be nonempty, and let $s\in S$. Then
 \[c_S(s) = \frac{1}{\prod\limits_{s' \neq s}\bra{s-s'}}.\]   
\end{lemma}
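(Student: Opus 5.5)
The plan is to prove the formula by uniqueness of solutions to the Vandermonde system. Write $S=\{s_1,\ldots,s_n\}$ with $n=\abs{S}$. The elements $s_i$ are distinct, so the Vandermonde matrix in the display above has nonzero determinant $\prod_{i<j}(s_j-s_i)$ in $\F_p$. Hence the system defining the coefficients in \thref{coeffs} has exactly one solution. It therefore suffices to check that the proposed values
\[c_S(s)=\prod_{s'\neq s}(s-s')^{-1}\]
satisfy all $n$ equations.

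To check this, I will use Lagrange interpolation. For any polynomial $f$ of degree at most $n-1$ over $\F_p$,
\[f(X)=\sum_{s\in S} f(s)\prod_{s'\neq s}\frac{X-s'}{s-s'}.\]
This holds because both sides have degree at most $n-1$ and agree at the $n$ distinct points of $S$. I will compare the coefficients of $X^{n-1}$ on the two sides. On the right this coefficient is $\sum_{s\in S} f(s)\,c_S(s)$ with the proposed values of $c_S(s)$. On the left it is the $X^{n-1}$ coefficient of $f$.

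Now take $f(X)=X^j$. For $0\le j\le n-2$ the coefficient of $X^{n-1}$ in $f$ is $0$. For $j=n-1$ it is $1$. This gives exactly the system in \thref{coeffs}. The case $n=1$ is consistent: the empty product gives $c_S(s)=1$, and the only equation is $\sum_{s\in S} s^0c_S(s)=1$.

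An alternative route is the partial fraction identity
\[\frac{1}{\prod_{s\in S}(X-s)}=\sum_{s\in S}\frac{c_S(s)}{X-s},\]
followed by expanding both sides in powers of $1/X$. This gives the same equations. There is no real obstacle here; the only point needing care is the distinctness of the elements of $S$, which guarantees both the invertibility of the Vandermonde matrix and that the denominators $s-s'$ are nonzero.
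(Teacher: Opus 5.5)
Your proof is correct. It shares the final step with the paper: uniqueness of the solution via the invertible Vandermonde matrix. It differs in how you verify that the proposed values actually solve the system.

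The paper does not use Lagrange interpolation. It specialises its general identity \thref{generalsums} for the sums $S_{k,t}$ to $t=\abs{S}-1$ with $B=S$; that identity is proved by induction on $t$ using partial fractions. Cancelling a factorial then gives \thref{interpolationcoefficients}, which says the $j$-th moment of the proposed coefficients is $0$ for $j\le\abs{S}-2$ and $h_{j-\abs{S}+1}(S)$ for $j\ge\abs{S}-1$. This route suits the paper because \thref{generalsums} is needed anyway in Sections 4 and 5, and the same specialisation yields \thref{homo} at no extra cost.

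Your interpolation argument is more elementary and self-contained. Since it never divides by $\abs{S}!$, it works for every nonempty $S\subseteq\F_p$, including $S=\F_p$. The paper's appendix lemma assumes $S\subsetneq\F_p$, which is harmless there because in the applications $\abs{S}=\alpha<p$.

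The cost of your route is that comparing coefficients of $X^{n-1}$ only controls the first $n$ moments, so it does not give \thref{homo}. Your alternative, expanding $1/\prod_{s\in S}(X-s)$ in powers of $1/X$, recovers both lemmas at once and also needs no factorial cancellation. That is essentially Kalmynin's generating-function proof, which the paper cites.
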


We also note the following fact about the coefficients $c_S\bra{s}$.
\begin{lemma}\thlabel{homo}
    For $m \geq 0$, and any $S \subsetneq \F_p$,
    \[\sum\limits_{s \in S}c_S(s)s^{m+\abs{S}-1}=h_m(S).\]

\end{lemma}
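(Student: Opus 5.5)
Write $S=\{s_1,\ldots,s_\alpha\}$ with $\alpha=\abs{S}$. The plan is to prove the identity by generating functions: encode all the sums $\sum_{s}c_S(s)s^{N}$ for $N\geq 0$ in a single formal power series in $t$, evaluate that series via partial fractions, and then compare coefficients. Since $S\subsetneq\F_p$, the elements of $S$ are distinct and the coefficients $c_S(s)$ are well defined by \thref{coeffs2}. We work in the formal power series ring $\F_p[[t]]$, where $1-st$ is invertible for every $s$.

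The key step is the partial fraction decomposition
\[\frac{1}{\prod_{s\in S}(1-st)}=\sum_{s\in S}\frac{c_S(s)\,s^{\alpha-1}}{1-st}.\]
This is the standard expansion of $1/\prod_{s}(x-s)$, rewritten in the variable $t$. To verify it, substitute $t=1/x$ into
\[\frac{1}{\prod_{s}(x-s)}=\sum_s\frac{c_S(s)}{x-s},\]
which holds by \thref{coeffs2} and the residue/cover-up computation. The left-hand side becomes $x^{-\alpha}\big/\prod_s(1-st)=t^{\alpha}\big/\prod_s(1-st)$. The right-hand side becomes $\sum_s c_S(s)\,t/(1-st)$. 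Dividing by $t^{\alpha}$ gives
\[\frac{1}{\prod_s(1-st)}=\sum_s\frac{c_S(s)}{t^{\alpha-1}(1-st)}.\]
This contains negative powers of $t$, so we handle it as follows. Expand each term $c_S(s)\,t^{1-\alpha}\sum_{N\geq0}s^Nt^N$. By \thref{coeffs}, the coefficient of $t^{N+1-\alpha}$ in the sum is $\sum_s c_S(s)s^N$, which vanishes for $N\leq \alpha-2$. So all negative powers of $t$ cancel, and the coefficient of $t^m$ for $m\geq0$ equals $\sum_s c_S(s)s^{m+\alpha-1}$. Equivalently, this gives the displayed decomposition directly.

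For the final step, the left-hand side is $\prod_s\sum_{j\geq0}s^jt^j=\sum_{m\geq0}h_m(S)t^m$, by the definition of the complete homogeneous polynomials in \thref{homoelem}. Comparing coefficients of $t^m$ gives the claim.

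The only delicate point is justifying the partial fraction identity rigorously in characteristic $p$. To avoid working with Laurent series, I would instead multiply through by $\prod_s(1-st)$ and check a polynomial identity of degree at most $\alpha-1$ in $t$:
\[\sum_s c_S(s)s^{\alpha-1}\prod_{s'\neq s}(1-s't)=1.\]
Evaluating at $t=1/s_0$ for each $s_0\in S\setminus\{0\}$ gives
\[c_S(s_0)s_0^{\alpha-1}\prod_{s'\neq s_0}\bigl(1-s'/s_0\bigr)=1\]
by \thref{coeffs2}. The constant term equals $\sum_s c_S(s)s^{\alpha-1}=1$ by \thref{coeffs}, which handles the possible element $0\in S$. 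Together these give $\alpha$ conditions, at $\alpha-1$ or $\alpha$ distinct points plus the constant term, which determine a polynomial of degree at most $\alpha-1$. Hence the identity holds.
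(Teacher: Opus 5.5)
Your proof is correct, but it takes a different route from the paper's. You first establish the partial-fraction identity $\prod_{s\in S}(1-st)^{-1}=\sum_{s\in S}c_S(s)s^{\alpha-1}(1-st)^{-1}$ in $\F_p[[t]]$. To do this you clear denominators and check the resulting polynomial identity, of degree at most $\alpha-1$, at the points $t=1/s_0$ using \thref{coeffs2} and at $t=0$ using \thref{coeffs}. You then compare the coefficient of $t^m$ with that of $\sum_{m\geq 0}h_m(S)t^m$.

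The paper instead proves the general identity \thref{generalsums} for the sums $S_{k,t}(B)$, by induction on $t$ via a partial-fraction recursion. It specialises this to $B=S$ and $t=\abs{S}-1$ in \thref{interpolationcoefficients}, and reads off \thref{coeffs2} and \thref{homo} at the same time. That step cancels $r!$, which is where the hypothesis $S\neq\F_p$ enters.

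Your argument is essentially the generating-function proof the paper attributes to Kalmynin. It is shorter and never needs $\abs{S}<p$, but it takes the explicit formula of \thref{coeffs2} as an input. The paper's route obtains both lemmas from a single identity that it needs anyway for evaluating $S_{k,1}$ and $S_{k,2}$ in Sections \ref{section4} and \ref{section5}.

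Two minor points. The Laurent-series detour in your second paragraph is unnecessary once you have the polynomial check. Also, it is not the hypothesis $S\subsetneq\F_p$ that makes the elements of $S$ distinct or the $c_S(s)$ well defined; that holds for any set $S\subseteq\F_p$.
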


Kalmynin gives a proof of \thref{coeffs2,homo} via generating functions as Lemmas 1 and 2 in \cite{kalmynin2025additive}. We give a variant of the proof of both lemmas in the appendix via combinatorial identities using partial fractions.
\smallbreak
We can now prove the modified Hanson-Petridis identity.
\begin{lemma}\thlabel{modifiedHP}
    Let $S \sub \mu_d$ be a set of size $\al$. For $s \in S$, define
    \[k_S\bra{s}=\frac{c_S\bra{s}}{s}, \qquad C_S=\sum_{s \in S}k_S\bra{s}.\]
    Let $T \sub \F_p^*$ be a set of size $\ga$, such that
    \[S+t \sub t \mu_d\]
    for every $t \in T$. If $d = \al\bra{\ga+1}$, then
    \[\sum_{s \in S}k_S\bra{s}\bra{x+s}^{\al+d-1} - C_S x^{\al + d-1}=\binom{\al+d-1}{\al}x^{\al-1}\prod_{t \in T}\bra{x-t}^{\al}.\]
\end{lemma}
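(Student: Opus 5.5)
The plan is to compare both sides as polynomials of degree at most $d-1$. Write $N=\al+d-1$ and let $F(x)$ denote the left-hand side. I will show three things: $F$ has degree at most $d-1$ with $x^{d-1}$-coefficient $\binom{N}{\al}$; $x^{\al-1}$ divides $F$; and each $t\in T$ is a root of $F$ of multiplicity at least $\al$. Since $0\notin T$, the factors $x^{\al-1}$ and $(x-t)^\al$ for distinct $t$ are pairwise coprime. Their product has degree $\al-1+\al\ga=\al(\ga+1)-1=d-1$, using the hypothesis $d=\al(\ga+1)$. Hence $F$ is a scalar multiple of the right-hand product, and the scalar is read off from the leading coefficient.

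\textbf{Degree and leading term.} Expanding binomially, the coefficient of $x^{N-j}$ in $F$ is $\binom{N}{j}\sum_{s}k_S(s)s^j-[j=0]\,C_S$. The $j=0$ term cancels by the definition of $C_S$. For $1\le j\le \al$ we have $\sum_s k_S(s)s^j=\sum_s c_S(s)s^{j-1}$. By \thref{coeffs} this vanishes for $j-1\le \al-2$ and equals $1$ for $j=\al$. So $\deg F\le N-\al=d-1$, with top coefficient $\binom{N}{\al}$.

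\textbf{Divisibility by $x^{\al-1}$.} For $0\le i\le \al-2$, the coefficient of $x^i$ is $\binom{N}{i}\sum_s c_S(s)s^{N-i-1}$. Since $S\sub\mu_d$, we have $s^{N-i-1}=s^{d}\,s^{\al-2-i}=s^{\al-2-i}$. The exponent $\al-2-i$ lies in $[0,\al-2]$, so \thref{coeffs} makes the sum vanish.

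\textbf{Multiplicity at $t\in T$.} This is the main step. Every exponent involved satisfies $\al\le d<p$ and $N<p$: indeed $d\le (p-1)/2$ as $H$ is proper, and $\al\le d$. So ordinary derivatives of order $r\le\al-1<p$ detect multiplicity. Up to the nonzero factor $N(N-1)\cdots(N-r+1)$, we have
\[F^{(r)}(t)\propto\sum_s k_S(s)(t+s)^{N-r}-C_S t^{N-r}.\]
The hypothesis $S+t\sub t\mu_d$ gives $((t+s)/t)^d=1$. Hence
\[(t+s)^{N-r}=t^{N-r}\bra{1+s/t}^{\al-1-r}.\]
Therefore
\[F^{(r)}(t)\propto t^{N-r}\sum_{j=1}^{\al-1-r}\binom{\al-1-r}{j}t^{-j}\sum_s c_S(s)s^{j-1},\]
where the $j=0$ term has cancelled against $C_S$. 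Every inner sum has $j-1\le\al-2-r\le\al-2$, so it vanishes by \thref{coeffs}. Thus $F^{(r)}(t)=0$ for all $0\le r\le\al-1$.

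Combining the three facts gives $F(x)=\binom{\al+d-1}{\al}x^{\al-1}\prod_{t\in T}(x-t)^\al$, which is the claimed identity.
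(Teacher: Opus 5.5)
Your proposal is correct and follows essentially the same route as the paper: you bound the degree using the top $\al$ coefficients, get $x^{\al-1}\mid F$ from $s^d=1$, get multiplicity $\al$ at each $t\in T$ from $(s+t)^d=t^d$, and read off the scalar from the $x^{d-1}$ coefficient. The only cosmetic difference is that you test multiplicity with ordinary derivatives, which is valid because $r\le\al-1<p$, whereas the paper uses the Taylor coefficients of $F(t+y)$; your appeal to $d\le (p-1)/2$ is not needed for this.
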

\begin{proof}
    First, we evaluate
    \[C_S = \sum_{s \in S}k_S\bra{s}=\sum_{s \in S}\frac{c_S\bra{s}}{s}.\]
    By the previous remark, we have
    \[c_S(s) = \frac{1}{\prod\limits_{s' \neq s}\bra{s-s'}},\]
    and hence
    \[C_S = \sum_{s \in S}\frac{1}{s}\frac{1}{\prod\limits_{s' \neq s}\bra{s-s'}}.\]
    We can rewrite this as
    \[ \sum_{s \in S}\frac{1}{s}\frac{1}{\prod\limits_{s' \neq s}\sqbra{\bra{\frac{1}{s'}-\frac{1}{s}}ss'}}\]
    \[=\sum_{s \in S}\frac{1}{s^{\al}\prod\limits_{s' \neq s}\sqbra{s'\bra{\frac{1}{s'}-\frac{1}{s}}}}\]
    \[=\bra{\prod\limits_{s \in S}\frac{1}{s}}\sum_{s \in S}\frac{1}{s^{\al-1}\prod\limits_{s' \neq s}\bra{\frac{1}{s'}-\frac{1}{s}}}.\]
But writing $\frac{1}{S}=\{\frac{1}{s}:s \in S\}$, we have
\[c_{\frac{1}{S}}\bra{\frac{1}{s}}=\frac{1}{\prod\limits_{\frac{1}{s'}\neq \frac{1}{s}}\bra{\frac{1}{s}-\frac{1}{s'}}} = \bra{-1}^{\al-1}\frac{1}{\prod\limits_{s' \neq s}\bra{\frac{1}{s'}-\frac{1}{s}}},\]
    and hence
    \[C_S= \bra{\prod\limits_{s \in S}\frac{1}{s}} \bra{-1}^{\al-1}\sum_{s \in S} c_{\frac{1}{S}}\bra{\frac{1}{s}}\bra{\frac{1}{s}}^{\al-1} =\bra{-1}^{\al-1} \bra{\prod\limits_{s \in S}\frac{1}{s}} ,\]
    since
    \[\sum_{s \in S} c_{\frac{1}{S}}\bra{\frac{1}{s}}\bra{\frac{1}{s}}^{\al-1} = 1\]
    by definition of the coefficients $c_{\frac{1}{S}}$.
    \bigbreak
Define
    \[F\bra{x}=\sum_{s \in S}k_S\bra{s}\bra{x+s}^{d+\al-1}-C_Sx^{d+\al-1}.\]
Expanding the $\bra{x+s}^{d+\al-1}$ term, the coefficient of $x^{d+\al-1}$ in
\[\sum_{s \in S}k_S\bra{s}\bra{x+s}^{d+\al-1}\]
is $C_S$, which cancels identically with the $C_Sx^{d+\al-1}$ term. So, $F(x)$ has no $x^{d+\al-1}$ term.
\smallbreak
For $1 \leq i \leq \al + d -1$, the $x^{\al+d-1-i}$ coefficient in $F(x)$ is
\[\binom{d+\al-1}{i}\sum_{s \in S}k_S\bra{s}s^{i}=\binom{d+\al-1}{i}\sum_{s \in S}c_S\bra{s}s^{i-1},\]
which is $0$ for $0 \leq i-1 \leq \al -2$, i.e. $1 \leq i \leq \al-1$. So, the first $\al$ coefficients of $F\bra{x}$ are $0$, and hence $F\bra{x}$ has degree at most $d-1$.
\smallbreak
We now show that $0$ is a root of $F$, of multiplicity at least $\al-1$. For $0 \leq j \leq \al-2$, the coefficient of $x^j$ in $F(x)$ is
\[\binom{d+\al-1}{j}\sum_{s \in S}k_S\bra{s}s^{d+\al-1-j}=\binom{d+\al-1}{j}\sum_{s \in S}c_S\bra{s}s^{\al-2-j}=0\]
by definition of the coefficients $c_S$, where we are using the fact that $s^d=1$. Thus every term in $F(x)$ has degree at least $\al-1$, so $x^{\al-1}$ divides $F(x)$, and hence $0$ is a root of $F$ of multiplicity at least $\al-1$.
\smallbreak
We now show that every $t \in T$ is also a root of $F$ of multiplicity at least $\al$. Similar to the previous part, we do this by showing that the first $\al$ coefficients of $y$ in the expansion of $F\bra{t+y}$ vanish. For $0 \leq j \leq \al-1$, the coefficient of $y^j$ in $F\bra{t+y}$ is
\[\binom{d+\al-1}{j}\bra{\sum_{s \in S}k_S\bra{s}\bra{s+t}^{d+\al-1-j}-C_S t^{d+\al-1-j}}.\]
Since $S+t \sub t\mu_d$, we have $\bra{s+t}^d=t^d$ for all $s \in S$, and hence the above is
\[t^d\binom{d+\al-1}{j}\bra{\sum_{s \in S}k_S\bra{s}\bra{s+t}^{\al-1-j}-C_S t^{\al-1-j}}.\]
Expanding $\bra{s+t}^{\al-1-j}$, we see that
\[\sum_{s \in S}k_S\bra{s}\bra{s+t}^{\al-1-j}-C_S t^{\al-1-j} = 0,\]
since all terms with a non-zero power of $s$ in the expansion of $\bra{s+t}^{\al-1-j}$ vanish by definition of the coefficients, and the $t^{\al-1-j}$ term cancels identically with the $C_S t^{\al-1-j}$ term. Thus every $t \in T$ is a root of $F$ with multiplicity at least $\al$.
\bigbreak
It follows that
\[F(x)= C_F x^{\al-1}\prod_{t \in T}\bra{x-t}^{\al},\]
since $x^{\al-1}\prod_{t \in T}\bra{x-t}^{\al}$ is certainly a factor of $F(x)$ by the above, but this has degree $\al-1+\ga\al = d-1$, and $F$ has degree at most $d-1$, so $F$ is in fact a scalar multiple of this factor.
\smallbreak
To find $C_F$, we compare coefficients of $x^{d-1}$. The coefficient of $x^{d-1}$ in the definition of $F$ is given by
\[\binom{d+\al-1}{\al}\sum_{s \in S}k_S(s)s^{\al}=\binom{d+\al-1}{\al}\sum_{s \in S}c_S(s)s^{\al-1}=\binom{d+\al-1}{\al}.\]
On the other hand, the coefficient of $x^{d-1}$ in $x^{\al-1}\prod_{t \in T}\bra{x-t}^{\al}$ is $1$, and hence
\[C_F=\binom{\al+d-1}{\al},\]
which proves the result.
\end{proof}

We now apply this identity to reciprocal translates of $A$. We fix $b \in B$, and from now on we will write
\[\frac{1}{A+b}=\left\{\frac{1}{a+b}:a \in A\right\}, \qquad B_b^{-1}= \left\{\frac{1}{b-b'}:b' \in B\setminus \{b\}\right\}.\]
For the remainder of the paper, suppose that $A+B=\mu_d$ is a proper multiplicative subgroup and that $\abs{A},\abs{B}>2$. By \thref{kalmyninsize}, we have $\abs{A}=\abs{B}=\al$ and $\al^2=d$.

\begin{lemma}\thlabel{poly2}
We have the following polynomial identity
\[\sum\limits_{a \in A} c_{\frac{1}{A+b}}\bra{\frac{1}{a+b}}\bra{a+b}\bra{x+\frac{1}{a+b}}^{d+\al-1}=C_{A+b}x^{d+\al-1}+\binom{d+\al-1}{\al}x^{\al-1}\prod\limits_{b' \neq b}\bra{x+\frac{1}{b-b'}}^{\al},\]
where
\[C_{A+b}=\bra{-1}^{\al-1}\prod_{a \in A}\bra{a+b}.\]
\end{lemma}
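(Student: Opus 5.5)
We apply \thref{modifiedHP} with $S=\frac{1}{A+b}$ and $T=B_b^{-1}$. First we check the hypotheses. Since $A+B=\mu_d$ and $0\notin\mu_d$, every $a+b$ is a nonzero element of $\mu_d$. Hence $S\sub\mu_d$, because $\mu_d$ is closed under inversion. Moreover $\abs{S}=\al$, since $a\mapsto \frac{1}{a+b}$ is injective.

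The set $T$ consists of the elements $\frac{1}{b-b'}$ for $b'\neq b$. These are nonzero and distinct, so $T\sub\F_p^*$ and $\ga=\abs{T}=\be-1=\al-1$. This gives $\al(\ga+1)=\al^2=d$, using \thref{kalmyninsize}.

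The key hypothesis is $S+t\sub t\mu_d$ for every $t\in T$. For $t=\frac{1}{b-b'}$ we compute
\[\frac{1}{a+b}+\frac{1}{b-b'}=\frac{(b-b')+(a+b)}{(a+b)(b-b')}.\]
The numerator $a+2b-b'$ does not obviously lie in $\mu_d$, so I reconsider the sign convention. It seems more natural to use $\frac{1}{a+b}-\frac{1}{a+b'}$ type expressions. Let me recompute with $t=\frac{1}{b-b'}$ and test $S-t$ instead:
\[\frac{1}{a+b}-\frac{1}{b-b'}=\frac{-(a+b')}{(a+b)(b-b')}.\]
Dividing by $t$ gives $-\frac{a+b'}{a+b}$. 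Here both $a+b'$ and $a+b$ lie in $\mu_d$, so the quotient is $-1$ times an element of $\mu_d$.

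This does not give exactly the form $S+t\sub t\mu_d$. The remedy is to apply the lemma with $T$ replaced by its negative. Set $T'=-B_b^{-1}=\{\frac{1}{b'-b}\}$ and $t'=\frac{1}{b'-b}$. Then
\[s+t'=\frac{1}{a+b}+\frac{1}{b'-b}=\frac{a+b'}{(a+b)(b'-b)},\]
and so $(s+t')/t'=\frac{a+b'}{a+b}\in\mu_d$. Thus \thref{modifiedHP} applies to the pair $(S,T')$. The factors $\prod_{t'\in T'}(x-t')^\al$ then equal $\prod_{b'\neq b}\bra{x+\frac{1}{b-b'}}^{\al}$, which matches the right-hand side of the claim.

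Next we translate the left-hand side. By definition $k_S(s)=c_S(s)/s$, and for $s=\frac{1}{a+b}$ this is $c_{\frac{1}{A+b}}\bra{\frac{1}{a+b}}(a+b)$, as required. The constant $C_S$ was computed in the proof of \thref{modifiedHP} as $C_S=(-1)^{\al-1}\prod_{s\in S}s^{-1}$. With $S=\frac{1}{A+b}$ this becomes $(-1)^{\al-1}\prod_{a}(a+b)=C_{A+b}$.

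Moving $C_Sx^{d+\al-1}$ to the right-hand side gives exactly the stated identity. The main point needing care is the sign bookkeeping in the hypothesis $S+t\sub t\mu_d$, which is handled by the computation above.
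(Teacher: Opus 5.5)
Your proposal is correct and follows the paper's proof: apply \thref{modifiedHP} with $S=\frac{1}{A+b}$ and $T=-B_b^{-1}$, check $S+t\sub t\mu_d$ via $\frac{1}{a+b}+\frac{1}{b'-b}=\frac{1}{b'-b}\cdot\frac{a+b'}{a+b}$, and then identify $k_S$, $C_S$ and the product factor $\prod_{b'\neq b}\bra{x+\frac{1}{b-b'}}^{\al}$. In a final write-up, drop the false start with $T=B_b^{-1}$ and go straight to $-B_b^{-1}$.
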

 \begin{remark} \thlabel{rem:yip} The above lemma is Kalmynin's \cite[Lemma 9]{kalmynin2025additive}. Kalmynin proves it by applying a universal change of variables $x\mapsto \frac1x+b$ to the original Hanson-Petridis polynomial. The same change of variables can also be used to prove the key claim by Kim, Yip and Yoo \cite[Claim 4.1]{kim2026multiplicative}. We instead derive it via \thref{modifiedHP} to emphasise the role of the relation \eqref{eq:alg} below. \end{remark}

\begin{proof}
Since $A+B=\mu_d$, we have $\frac{1}{A+b}\subseteq\mu_d$. Moreover, if $b'\neq b$, then
\begin{equation}\label{eq:alg}\frac{1}{a+b}+\frac{1}{b'-b}=\frac{1}{b'-b}\cdot\frac{a+b'}{a+b}\in\frac{1}{b'-b}\mu_d,\end{equation}
because both $a+b'$ and $a+b$ lie in $\mu_d$. Thus the hypotheses of \thref{modifiedHP} hold with $S=\frac{1}{A+b}$ and $T=-B_{b}^{-1}$, since $\abs{B_{b}^{-1}}=\alpha-1$ and $d=\alpha^2=\alpha\bra{\abs{B_{b}^{-1}}+1}$.
\smallbreak
Applying \thref{modifiedHP} with $S=\frac{1}{A+b}$ and $T=-B_b^{-1}$, we obtain
\[\sum\limits_{a\in A}k_{\frac{1}{A+b}}\bra{\frac{1}{a+b}}\bra{x+\frac{1}{a+b}}^{d+\alpha-1}-C_{\frac{1}{A+b}}x^{d+\alpha-1}=\binom{d+\alpha-1}{\alpha}x^{\alpha-1}\prod\limits_{b'\ne b}\bra{x-\frac{1}{b'-b}}^\alpha.\]
By definition of $k_{\frac{1}{A+b}}$, we have
\[k_{\frac{1}{A+b}}\bra{\frac{1}{a+b}}=c_{\frac{1}{A+b}}\bra{\frac{1}{a+b}}\bra{a+b}.\]
Also, by the formula for $C_S$ from \thref{modifiedHP},
\[C_{\frac{1}{A+b}}=(-1)^{\alpha-1}\prod\limits_{s\in \frac{1}{A+b}}\frac{1}{s}=(-1)^{\alpha-1}\prod\limits_{a\in A}\bra{a+b}=C_{A+b}.\]
Finally,
\[x-\frac{1}{b'-b}=x+\frac{1}{b-b'}.\]
Substituting these identities into the previous displayed equation and moving the term $C_{\frac{1}{A+b}}x^{d+\alpha-1}$ to the right-hand side gives the claimed identity.
\end{proof}

\begin{remark}
 Note that $\binom{\al+d-1}{\al}x^{\al-1}\prod\limits_{b' \neq b}\bra{x+\frac{1}{b-b'}}^\al$
is a polynomial of degree 
\[\al-1 + \al\bra{\be-1} = \al-1+\al\be-\al = d-1.\]
So, after $x^{\al+d-1}$, the next highest powers of $x$ in \thref{poly2} are $x^{d-1}, x^{d-2}, x^{d-3}$. We equated the $x^{d-1}$ coefficients in the proof of \thref{modifiedHP}, and will study the $x^{d-2}$ and $x^{d-3}$ coefficients in \thref{lineartransfer,quadtransfer}. 
\end{remark}

\begin{lemma}\thlabel{lineartransfer}
For any $b \in B$,
    \[p_1\bra{\Ab}=C_1 p_1\bra{\Bb},\]
    where 
    \[C_1 = \frac{\al}{\al-1}.\]
    \end{lemma}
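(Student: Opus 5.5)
We compare the coefficients of $x^{d-2}$ on both sides of the identity in \thref{poly2}. Write $S=\frac{1}{A+b}$, so the left-hand side is $\sum_{s\in S}k_S(s)(x+s)^{d+\al-1}$, with $k_S(s)=c_S(s)/s$ as in \thref{modifiedHP}. The term $x^{d-2}$ corresponds to index $i=\al+1$ in the binomial expansion. The left-hand coefficient is therefore
\[\binom{d+\al-1}{\al+1}\sum_{s\in S}k_S(s)s^{\al+1}=\binom{d+\al-1}{\al+1}\sum_{s\in S}c_S(s)s^{\al}.\]
By \thref{homo} with $m=1$, this equals $\binom{d+\al-1}{\al+1}h_1(S)=\binom{d+\al-1}{\al+1}p_1\bra{\Ab}$, using \thref{newton}. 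The term $C_{A+b}x^{d+\al-1}$ does not contribute, since $d+\al-1>d-2$.

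On the right-hand side, $x^{\al-1}\prod_{b'\neq b}\bra{x+\frac{1}{b-b'}}^{\al}$ is monic of degree $d-1$. Its $x^{d-2}$ coefficient is $\al\sum_{b'\neq b}\frac{1}{b-b'}=\al\,p_1\bra{\Bb}$, because each of the $\al-1$ linear factors appears with multiplicity $\al$. The right-hand coefficient is thus $\binom{d+\al-1}{\al}\al\,p_1\bra{\Bb}$.

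Equating the two coefficients gives
\[\binom{d+\al-1}{\al+1}p_1\bra{\Ab}=\al\binom{d+\al-1}{\al}p_1\bra{\Bb}.\]
Since $\binom{d+\al-1}{\al+1}=\binom{d+\al-1}{\al}\frac{d-1}{\al+1}$ and $d=\al^2$, the ratio is
\[\frac{\al(\al+1)}{d-1}=\frac{\al(\al+1)}{(\al-1)(\al+1)}=\frac{\al}{\al-1}=C_1.\]

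The only real obstacle is justifying the divisions in $\F_p$, namely that $\binom{d+\al-1}{\al}$, $\al+1$ and $\al-1$ are nonzero mod $p$. Since $d\mid p-1$ and $d$ is proper, we have $d\le (p-1)/2$, so $d+\al-1<p$. Hence all the binomial coefficients involved, which have entries below $p$, are nonzero mod $p$, and so are $\al\pm1$. It is enough to divide by $\binom{d+\al-1}{\al}$ and by $(d-1)/(\al+1)=\al-1\neq0$.
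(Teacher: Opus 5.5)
Your proof is correct and follows the paper's argument. Like the paper, you compare the $x^{d-2}$ coefficients in \thref{poly2}, evaluate the left side via \thref{homo} with $m=1$ and the right side as $\al\sum_{b'\neq b}\frac{1}{b-b'}$, and simplify the binomial ratio using $d=\al^2$. Your check that $d+\al-1<p$, so that the binomial coefficients and $\al\pm1$ are invertible in $\F_p$, makes explicit a point the paper leaves implicit.
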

\begin{proof}

    We equate the coefficients of $x^{d-2}$ in \thref{poly2}. The coefficient of $x^{d-2}$ on the left-hand side of \thref{poly2} is
    \[\binom{\al+d-1}{\al+1}\sum\limits_{a \in A}c_{\frac{1}{A+b}}\bra{\frac{1}{a+b}}\bra{\frac{1}{a+b}}^{\al}\]
    \[=\binom{\al+d-1}{\al+1}h_1\bra{\frac{1}{A+b}}\]
    by \thref{homo}.
    The coefficient of $x^{d-2}$ on the right-hand side of \thref{poly2} is
    \[\binom{\al+d-1}{\al} \al\sum\limits_{b' \neq b}\frac{1}{b-b'},\]
    and hence we have
    \[\binom{\al+d-1}{\al}\al\sum\limits_{b' \neq b}\frac{1}{b-b'} = \binom{\al+d-1}{\al+1}h_1\bra{\frac{1}{A+b}},\]
    which we can write as
    \[\sum\limits_{a \in A}\frac{1}{a+b}= \frac{\binom{\al+d-1}{\al}}{\binom{\al+d-1}{\al+1}}\al\sum\limits_{b' \neq b}\frac{1}{b-b'}\]
    \[=\frac{\al\bra{\al+1}}{d-1}\sum\limits_{b' \neq b}\frac{1}{b-b'}.\]
    Using the fact that $d = \al^2$ to simplify the quotient gives the result.
\end{proof}

\begin{lemma}\thlabel{quadtransfer}
For any $b \in B$,
\[p_1\bra{\Ab}^2 + p_2\bra{\Ab} = \frac{\al\bra{\al+2}}{\bra{\al-1}\bra{\al^2-2}}\bra{\al p_1\bra{\Bb}^2 - p_2\bra{\Bb}}.\]
\end{lemma}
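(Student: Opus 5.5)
The plan is to compare the coefficients of $x^{d-3}$ on the two sides of \thref{poly2}, exactly as \thref{lineartransfer} compared the coefficients of $x^{d-2}$. The term $C_{A+b}x^{d+\al-1}$ does not contribute to this coefficient, so only the sum on the left and the product on the right need to be expanded.

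\emph{Left-hand side.} Write $S=\frac{1}{A+b}$ and $s=\frac{1}{a+b}$. The summand is then $c_S(s)\,s^{-1}(x+s)^{d+\al-1}$. Its $x^{d-3}$ coefficient is $\binom{d+\al-1}{\al+2}c_S(s)s^{\al+1}$. Summing over $s\in S$ and applying \thref{homo} with $m=2$ and $\abs{S}=\al$ gives
\[\binom{d+\al-1}{\al+2}h_2\bra{\Ab}=\frac12\binom{d+\al-1}{\al+2}\bra{p_1\bra{\Ab}^2+p_2\bra{\Ab}},\]
where the equality is the expression for $h_2$ in \thref{newton}.

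\emph{Right-hand side.} Set $y_{b'}=\frac{1}{b-b'}$. The polynomial $x^{\al-1}\prod_{b'\neq b}(x+y_{b'})^{\al}$ is monic of degree $d-1$. Its $x^{d-3}$ coefficient is therefore the second elementary symmetric function of the multiset in which each $y_{b'}$ is repeated $\al$ times. That multiset has power sums $\al p_1\bra{\Bb}$ and $\al p_2\bra{\Bb}$. Newton's identity for $e_2$ then gives
\[\tfrac12\bra{\al^2p_1\bra{\Bb}^2-\al p_2\bra{\Bb}}=\tfrac{\al}{2}\bra{\al p_1\bra{\Bb}^2-p_2\bra{\Bb}}.\]
The right-hand side coefficient is this quantity multiplied by $\binom{d+\al-1}{\al}$.

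\emph{Combining.} Equate the two coefficients and multiply by $2$. It remains to simplify the ratio of binomial coefficients using $d=\al^2$:
\[\frac{\binom{d+\al-1}{\al}}{\binom{d+\al-1}{\al+2}}=\frac{(\al+1)(\al+2)}{(d-1)(d-2)}=\frac{(\al+1)(\al+2)}{(\al-1)(\al+1)(\al^2-2)}=\frac{\al+2}{(\al-1)(\al^2-2)}.\]
Multiplying by the factor $\al$ from the right-hand side gives exactly the stated constant.

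The only point needing care is that all these manipulations are legitimate in $\F_p$. We must divide by $2$, by $\binom{d+\al-1}{\al+2}$, and by the factors $\al-1$, $\al+1$ and $\al^2-2$. Since $d\mid p-1$ and $d<p-1$ (as $H$ is proper), we have $p\geq 2d+1=2\al^2+1$. Hence $d+\al-1<p$, so every binomial coefficient and every factor listed above is a nonzero integer smaller than $p$, and is invertible modulo $p$. The use of \thref{homo} is also justified, because $S\subsetneq\F_p$.
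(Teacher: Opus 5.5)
Your proposal is correct and matches the paper's proof: both equate the $x^{d-3}$ coefficients in \thref{poly2}, evaluate the left side as $\binom{d+\al-1}{\al+2}h_2\bra{\Ab}$ via \thref{homo} and Newton's identity, and evaluate the right side as $\binom{d+\al-1}{\al}$ times the second elementary symmetric function of the multiset (equivalently the paper's $\al^2e_2\bra{\Bb}+\binom{\al}{2}p_2\bra{\Bb}$), then simplify the binomial ratio using $d=\al^2$. Your check that every divisor is nonzero in $\F_p$, via $p\geq 2\al^2+1$, makes explicit a point the paper leaves implicit.
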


\begin{proof}
We equate the coefficients of $x^{d-3}$ in the polynomial identity from \thref{poly2}.

By binomial expansion, the term containing $x^{d-3}$ on the left-hand side of \thref{poly2} is derived from the $x^{\alpha+d-1- (\alpha+2)} = x^{d-3}$ term inside the sum
\[ \sum_{a\in A}c_{\frac{1}{A+b}}\left(\frac{1}{a+b}\right)(a+b)\binom{d+\alpha-1}{\alpha+2}x^{d-3}\left(\frac{1}{a+b}\right)^{\alpha+2}.\]
The coefficient is therefore
\[ \binom{\alpha+d-1}{\alpha+2}\sum_{a\in A}c_{\frac{1}{A+b}}\left(\frac{1}{a+b}\right)(a+b)\left(\frac{1}{a+b}\right)^{\alpha+2} = \binom{\alpha+d-1}{\alpha+2}\sum_{a\in A}c_{\frac{1}{A+b}}\left(\frac{1}{a+b}\right)\left(\frac{1}{a+b}\right)^{\alpha+1}.\]
By \thref{homo}, this simplifies to
\[ \binom{\alpha+d-1}{\alpha+2}h_{2}\left(\frac{1}{A+b}\right).\]
Using the identity for $h_2$ from \thref{newton}, we get
\begin{equation}\label{3.4LHS}
\binom{\alpha+d-1}{\alpha+2}\frac{1}{2}\left(\left(\sum_{a\in A}\frac{1}{a+b}\right)^{2}+\sum_{a\in A}\frac{1}{(a+b)^{2}}\right) = \binom{\alpha+d-1}{\alpha+2}\frac{1}{2}\left(p_{1}\left(\frac{1}{A+b}\right)^{2}+p_{2}\left(\frac{1}{A+b}\right)\right).
\end{equation}

Next, we determine the coefficient on the right-hand side of \thref{poly2} - the relevant term is 
\[\binom{\al+d-1}{\al}x^{\alpha-1}\prod_{b^{\prime}\ne b}(x+\frac{1}{b-b^{\prime}})^{\alpha}.\]  
The expansion of the product yields 
\[\binom{\al+d-1}{\al} \left(\alpha^2 e_2(B^{-1}_b)+\frac{\alpha(\alpha-1)}{2}p_2(B^{-1}_b)\right) = \frac{\binom{\al+d-1}{\al}}{2}
\left(\alpha^2 p_1(B^{-1}_b)^2 -\alpha p_2(B^{-1}_b)\right).\]

Equating with the previous formula we have
\[ \binom{\alpha+d-1}{\alpha+2}\frac{1}{2}\left(p_{1}\left(\frac{1}{A+b}\right)^{2}+p_{2}\left(\frac{1}{A+b}\right)\right) = \frac{\binom{\alpha+d-1}{\alpha}\al}{2} \left(\alpha p_{1}(B_{b}^{-1})^{2}-p_{2}(B_{b}^{-1}) \right).\]
Simplifying the binomial coefficients gives the required result
\[ p_{1}\left(\frac{1}{A+b}\right)^{2}+p_{2}\left(\frac{1}{A+b}\right)=\frac{\al\bra{\alpha+2}}{\bra{\al-1}\bra{\al^2-2}}\left(\alpha p_{1}(B_{b}^{-1})^{2}-p_{2}(B_{b}^{-1})\right).\]
\end{proof}

\begin{corollary}\thlabel{identitiesII}
For any $b\in B$, we have
\[e_2\bra{B_b^{-1}}=C_{2,1}p_1\bra{\frac{1}{A+b}}^2+C_{2,2}p_2\bra{\frac{1}{A+b}},\]
where
\[C_{2,1}=\frac{\bra{\al-1}\bra{\al-2}}{2\al^2\bra{\al+2}},\qquad C_{2,2}=\frac{\bra{\al-1}\bra{\al^2-2}}{2\al\bra{\al+2}}.\]
Equivalently,
\[p_2\bra{B_b^{-1}}=C'_{2,1}p_1\bra{\frac{1}{A+b}}^2-C'_{2,2}p_2\bra{\frac{1}{A+b}},\]
where
\[C'_{2,1}=\frac{\al-1}{\al+2},\qquad C'_{2,2}=\frac{\bra{\al-1}\bra{\al^2-2}}{\al\bra{\al+2}}.\]
\end{corollary}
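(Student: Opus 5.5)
The corollary is a direct algebraic consequence of \thref{lineartransfer,quadtransfer}. The plan is to eliminate $p_1\bra{\Bb}$ between them, which expresses $p_2\bra{\Bb}$ in terms of the power sums of $\Ab$. The formula for $e_2\bra{\Bb}$ then follows from Newton's identity $e_2 = \frac12\bra{p_1^2 - p_2}$ in \thref{newton}.

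To fix notation, write $P_1 = p_1\bra{\Ab}$, $P_2 = p_2\bra{\Ab}$, $Q_1 = p_1\bra{\Bb}$ and $Q_2 = p_2\bra{\Bb}$. By \thref{lineartransfer}, $Q_1 = \frac{\al-1}{\al}P_1$. Rearranging \thref{quadtransfer} gives
\[Q_2 = \al Q_1^2 - \frac{\bra{\al-1}\bra{\al^2-2}}{\al\bra{\al+2}}\bra{P_1^2+P_2}.\]
Substituting $Q_1$, the coefficient of $P_1^2$ becomes
\[\frac{\bra{\al-1}^2}{\al} - \frac{\bra{\al-1}\bra{\al^2-2}}{\al\bra{\al+2}} = \frac{\al-1}{\al}\cdot\frac{\bra{\al-1}\bra{\al+2}-\bra{\al^2-2}}{\al+2} = \frac{\al-1}{\al+2}.\]
This is $C'_{2,1}$. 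The coefficient of $P_2$ is $-\frac{\bra{\al-1}\bra{\al^2-2}}{\al\bra{\al+2}} = -C'_{2,2}$. This gives the second form of the statement.

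For the first form, $e_2\bra{\Bb} = \frac12\bra{Q_1^2 - Q_2}$. The coefficient of $P_1^2$ is
\[\frac12\left(\frac{\bra{\al-1}^2}{\al^2} - \frac{\al-1}{\al+2}\right) = \frac{\bra{\al-1}\bigl(\bra{\al-1}\bra{\al+2}-\al^2\bigr)}{2\al^2\bra{\al+2}} = \frac{\bra{\al-1}\bra{\al-2}}{2\al^2\bra{\al+2}}.\]
This equals $C_{2,1}$. The coefficient of $P_2$ is $\frac12 C'_{2,2} = C_{2,2}$.

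The only real care needed is that all divisions are legitimate in $\F_p$, namely that $\al$, $\al-1$, $\al+2$ and $\al^2-2$ are nonzero mod $p$. This holds because $p-1 = M\al^2$ with $M \geq 2$, so $p > \al^2+2$. Moreover $\al > 2$, so $\al^2-2 \geq 7$. Hence each of these quantities is a positive integer smaller than $p$. The same facts were already implicitly used in \thref{lineartransfer,quadtransfer}. There is no genuine obstacle beyond this bookkeeping.
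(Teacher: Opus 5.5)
Your proof is correct and follows the paper's argument exactly: you substitute \thref{lineartransfer} into the rearranged form of \thref{quadtransfer} to obtain the $p_2\bra{B_b^{-1}}$ identity, and then apply $e_2=\frac12\bra{p_1^2-p_2}$; your coefficient computations all check out. Your extra remark that $\al$, $\al-1$, $\al+2$ and $\al^2-2$ are nonzero modulo $p$ is valid, because $H$ proper forces $p=M\al^2+1$ with $M\geq2$, and the paper leaves this point implicit.
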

\begin{proof}
By \thref{lineartransfer}, we have
\[p_1\bra{B_b^{-1}}=\frac{\al-1}{\al}p_1\bra{\frac{1}{A+b}}.\]
Rearranging \thref{quadtransfer} gives
\[p_2\bra{B_b^{-1}}=\al p_1\bra{B_b^{-1}}^2-\frac{\bra{\al-1}\bra{\al^2-2}}{\al\bra{\al+2}}\bra{p_1\bra{\frac{1}{A+b}}^2+p_2\bra{\frac{1}{A+b}}}.\]
Substituting the formula for $p_1\bra{B_b^{-1}}$, we obtain
\[p_2\bra{B_b^{-1}}=\frac{\al-1}{\al+2}p_1\bra{\frac{1}{A+b}}^2-\frac{\bra{\al-1}\bra{\al^2-2}}{\al\bra{\al+2}}p_2\bra{\frac{1}{A+b}},\]
which proves the second identity.
Finally, since
\[e_2\bra{B_b^{-1}}=\frac12\bra{p_1\bra{B_b^{-1}}^2-p_2\bra{B_b^{-1}}},\]
we again use \thref{lineartransfer} and the formula just proved for $p_2\bra{B_b^{-1}}$. This gives
\[e_2\bra{B_b^{-1}}=\frac{\bra{\al-1}\bra{\al-2}}{2\al^2\bra{\al+2}}p_1\bra{\frac{1}{A+b}}^2+\frac{\bra{\al-1}\bra{\al^2-2}}{2\al\bra{\al+2}}p_2\bra{\frac{1}{A+b}},\]
as required.
\end{proof}

\section{The least nonzero power sum}\label{section4}

In this section, we introduce the least index at which a power sum of $A$ is nonzero and prove that this index is even. We organise the identities according to the number of reciprocal factors which they involve. The relations obtained directly by expanding powers of $a+b$ will be called \emph{zeroth-order} relations, while the argument using the first reciprocal transfer identity from \thref{lineartransfer} will be called \emph{first-order}.
\smallbreak
The parameter $k$ and the corresponding conclusions also appear in Kalmynin's treatment of quadratic residues. We include elementary proofs adapted to our notation and formulate the symmetrisation identity in the form which will be used again later.

\begin{lemma}\thlabel{klessthanalpha}
There exists an integer $1 \leq k \leq \al$ such that $p_k\bra{A}\neq0$.
\end{lemma}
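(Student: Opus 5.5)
We argue by contradiction: suppose $p_k\bra{A}=0$ for every $1\leq k\leq\al$. The plan is to show that this forces $A$ to be a very degenerate set, and then contradict $\abs{A}=\al>2$.

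First, we use Newton's identities from \thref{newton}. Since $\al<p$ (indeed $\al^2=d<p$), the factors $j\leq\al$ are invertible in $\F_p$. The recursion
\[j e_j\bra{A}=\sum_{i=1}^j(-1)^{i-1}e_{j-i}\bra{A}p_i\bra{A}\]
then gives $e_j\bra{A}=0$ for all $1\leq j\leq\al$, by induction on $j$. Hence the characteristic polynomial of $A$ is
\[\prod_{a\in A}\bra{X-a}=X^{\al}.\]
So every element of $A$ is a root of $X^\al$, that is, every element of $A$ equals $0$. This forces $\abs{A}\leq1$, contradicting $\abs{A}=\al>2$.

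There is one subtlety. The paper normalises $p_1\bra{A}=0$ by translating the summands in opposite directions, replacing $A$ by $A-t$ and $B$ by $B+t$, which leaves $A+B$ unchanged. The argument above does not depend on this normalisation, because it only uses $\abs{A}=\al$ with $2<\al<p$. I do not expect a genuine obstacle: the only point needing care is the invertibility of $j\leq\al$ modulo $p$, which follows from $\al^2=d\leq p-1$.

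For robustness, an alternative is to take $p_0$ into account via Vandermonde invertibility. The vectors $\bra{a^j}_{a\in A}$ for $0\leq j\leq\al-1$ are linearly independent, so the power sums cannot all vanish. The Newton identity route is cleaner, and it is the one I would write.
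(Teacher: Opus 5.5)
Your proof is correct and matches the paper's argument: Newton's identities, with each $j\le\al<p$ invertible in $\F_p$, turn $p_1\bra{A}=\cdots=p_\al\bra{A}=0$ into $e_1\bra{A}=\cdots=e_\al\bra{A}=0$, so $\prod_{a\in A}\bra{X-a}=X^\al$, which is impossible for $\abs{A}>1$. Your Vandermonde aside does not work as written, because $p_0=\al\neq0$ says nothing about $p_1,\ldots,p_\al$; a correct version applies the invertible Vandermonde system to the vector $\bra{a}_{a\in A}$, using $p_{j+1}\bra{A}=\sum_{a}a^j\cdot a$ for $0\le j\le\al-1$, but you rightly do not rely on the aside.
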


\begin{proof}
Suppose for a contradiction that $p_i\bra{A}=0$ for every $1\leq i\leq\al$. By Newton's identities, this implies $e_i\bra{A}=0$ for every $1\leq i\leq\al$. Hence for an indeterminate $X$,
\[\prod_{a\in A}\bra{X-a}=X^\al.\]
This is impossible since $A$ is a set of size $\al>1$. Therefore there is some $1 \leq k\leq\al$ such that $p_k\bra{A} \neq 0$, as required.
\end{proof}
If $p_1\bra{A} \neq 0$, we can replace $A$ and $B$ by $A-t$ and $B+t$, where $t = \frac{p_1\bra{A}}{\al}$. This does not change $A+B$, so we can assume without loss of generality from now on that $p_1\bra{A}=0$.
For the rest of the paper, $k$ will denote the least positive integer such that $p_k\bra{A} \neq 0$, and by \thref{klessthanalpha} we have
\[2 \leq k \leq \al.\]

\subsection{The zeroth-order relation} 
The uniqueness of the representations in $A+B=\mu_d$ immediately relates the power sums of the two summands. 

\begin{lemma}\thlabel{zerothorder} 
For every $1\leq j\leq k$, 
\[p_j\bra{B}=-p_j\bra{A}.\] 
In particular, $k$ is also the least positive integer such that $p_k\bra{B}\neq0$, and \[p_k\bra{B}=-p_k\bra{A}\neq0.\] \end{lemma}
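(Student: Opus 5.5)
Since every element of $\mu_d$ is represented exactly once as $a+b$ (Hanson–Petridis uniqueness, which also follows from $\abs{A}\abs{B}=\al^2=d=\abs{\mu_d}$), I would compare power sums over $\mu_d$ with power sums over the pairs $(a,b)$. For every integer $1\leq j\leq d-1$ we have $\sum_{x\in\mu_d}x^j=0$, because $\mu_d$ is a cyclic group of order $d<p$ and $x\mapsto x^j$ is a nontrivial character on it. Hence for $1\leq j\leq k\leq\al<d$,
\[0=\sum_{a\in A}\sum_{b\in B}\bra{a+b}^j=\sum_{i=0}^{j}\binom{j}{i}p_i\bra{A}p_{j-i}\bra{B},\]
with the convention $p_0\bra{A}=p_0\bra{B}=\al$.

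I would then argue by induction on $j$ that $p_j\bra{B}=-p_j\bra{A}$ and, for $j<k$, that $p_j\bra{B}=0$. For $j=1$ the identity reads $\al p_1\bra{B}+\al p_1\bra{A}=0$. Since $\al<p$, this gives $p_1\bra{B}=-p_1\bra{A}=0$. Suppose the claim holds for all indices below $j\leq k$. Every mixed term with $1\leq i\leq j-1$ contains the factor $p_i\bra{A}$, and $p_i\bra{A}=0$ for $1\leq i<k$ because $i<j\leq k$. So only the two extreme terms survive:
\[\al p_j\bra{B}+\al p_j\bra{A}=0,\]
and dividing by $\al$ (nonzero mod $p$) gives $p_j\bra{B}=-p_j\bra{A}$. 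For $j<k$ this is $0$, and for $j=k$ it is $-p_k\bra{A}\neq0$. Consequently $k$ is also the least positive index with $p_k\bra{B}\neq0$.

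The only point needing care is the character-sum vanishing. It requires $j\not\equiv0\pmod d$, which holds since $j\leq\al<\al^2=d$ when $\al>1$. It also requires the binomial coefficients and $\al$ to behave correctly mod $p$; this is fine since $\al<p$, and only the coefficient $\binom{j}{0}$ of the surviving terms matters. Beyond this, the argument is a routine induction, and I expect no real obstacle.
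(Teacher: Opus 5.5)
Your proposal is correct and matches the paper's proof: expand $\sum_{a\in A,\,b\in B}(a+b)^j=0$ for $1\leq j\leq k<d$, discard the mixed terms because $p_i(A)=0$ for $1\leq i<k$, and cancel $\alpha$. The induction framing does no harm but is not needed. You never use the inductive hypothesis, since the mixed terms vanish directly from the definition of $k$.
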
 
\begin{proof} 
Fix $1\leq j\leq k$. Since $j\leq\alpha<d$, we have 
\[\sum_{w\in\mu_d}w^j=0.\] 
Each element of $\mu_d$ has a unique representation as $a+b$, and therefore 
\[0 =\sum_{a\in A,b\in B}\bra{a+b}^j\]
\[=\sum_{i=0}^j\binom{j}{i}p_i\bra{A}p_{j-i}\bra{B}. \] 
For $1\leq i\leq j-1$, we have $i<k$, and hence $p_i\bra{A}=0$. Thus only the two endpoint terms remain, giving 
\[0=\alpha p_j\bra{A}+\alpha p_j\bra{B}.\] 
Since $\alpha\neq0$ in $\F_p$, the result follows. 
\end{proof}

\subsection{First-order relations} \label{sec:fo}

We now introduce the reciprocal-difference sums used throughout the rest of the proof. For a finite set $B\sub\F_p$, an integer $t\geq0$ with $\abs{B}\geq t+1$, and an integer $k\geq-t$, define
\[S_{k,t}\bra{B}:=\sum^*_{B^{t+1}}
\frac{b_0^{k+t}}{\prod_{i=1}^t\bra{b_0-b_i}},\]
where $\sum^*_{B^{t+1}}$ denotes summation over all ordered
$\bra{t+1}$-tuples $\bra{b_0,\ldots,b_t}\in B^{t+1}$ {\em with pairwise distinct entries.} We use the convention that an empty
product is equal to $1$, and also set, for a set $C$, $h_k\bra{C}:=0$ when $k<0$.
\smallbreak

We have the following general formula for $S_{k,t}$, which we prove in the appendix.

\begin{lemma}\thlabel{generalsums}
Let $B\sub\F_p$ be a finite set, let $t\geq0$ with
$\abs{B}\geq t+1$, and let $k\geq-t$. Then
\[(t+1)S_{k,t}\bra{B}=\sum_{\substack{i_0+\cdots+i_t=k\\i_0,\ldots,i_t\geq0}}\sum^*_{B^{t+1}}b_0^{i_0}\cdots b_t^{i_t}.\]
Equivalently,
\[(t+1)S_{k,t}\bra{B}=(t+1)!\sum_{\substack{C\sub B\\\abs{C}=t+1}}h_k\bra{C}.\]
\end{lemma}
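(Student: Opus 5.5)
The plan is to reduce everything to the explicit formula \thref{homo} for the Hanson--Petridis coefficients, by summing first over the ordered choice of $(b_1,\ldots,b_t)$ for a fixed unordered set $C=\{b_0,\ldots,b_t\}$. Group the ordered tuples in $S_{k,t}(B)$ according to their underlying set $C\sub B$ with $\abs{C}=t+1$. For a fixed $C$ and a fixed first entry $b_0\in C$, the remaining entries $b_1,\ldots,b_t$ run over all $t!$ orderings of $C\setminus\{b_0\}$, and the summand does not depend on the order. Hence
\[S_{k,t}(B)=t!\sum_{\substack{C\sub B\\ \abs{C}=t+1}}\ \sum_{b_0\in C}\frac{b_0^{k+t}}{\prod_{c\neq b_0}(b_0-c)}=t!\sum_{\substack{C\sub B\\ \abs{C}=t+1}}\ \sum_{c\in C}c_C(c)\,c^{k+\abs{C}-1}.\]
Here we use \thref{coeffs2}.

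When $k\geq0$, \thref{homo} (applicable since $C\subsetneq\F_p$) gives that the inner sum equals $h_k(C)$. Multiplying by $t+1$ then yields the second form of the claim. When $-t\leq k<0$, the exponent $k+t$ lies in $\{0,\ldots,t-1\}=\{0,\ldots,\abs{C}-2\}$, so the inner sum vanishes by \thref{coeffs}. This is consistent with the convention $h_k=0$ for $k<0$.

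For the first form, expand $h_k(C)$ as the sum over multi-indices. Write $C=\{c_0,\ldots,c_t\}$, so that $h_k(C)=\sum_{i_0+\cdots+i_t=k}c_0^{i_0}\cdots c_t^{i_t}$. Summing this over all $(t+1)!$ orderings of $C$, that is over all injective tuples $(b_0,\ldots,b_t)$ with image $C$, gives
\[\sum_{(b_0,\ldots,b_t)}\ \sum_{i_0+\cdots+i_t=k}b_0^{i_0}\cdots b_t^{i_t}=(t+1)!\,h_k(C).\]
This holds because for each ordering the inner sum is just $h_k(C)$ rewritten with relabelled variables, and $h_k$ is symmetric. Summing over $C$ and exchanging the order of summation gives the first displayed identity. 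For $k<0$ both sides are empty or zero.

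The only delicate points are the appeal to \thref{homo}, which requires $m=k\geq0$ and $S\neq\F_p$, and the negative range of $k$, which is handled directly by the defining vanishing property of the $c_S$. I do not expect a genuine obstacle: the whole argument is a regrouping of terms.
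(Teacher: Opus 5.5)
Your argument is correct, but it runs in the opposite direction to the paper's. The paper proves the first identity from scratch by induction on $t$. The partial-fraction expansion of $1/\prod_{i=1}^t(b_0-b_i)$ reduces $S_{k,t}(B)$ to sums with a single factor $b_0-b_1$ in the denominator. Symmetrising $b_0\leftrightarrow b_1$ then produces the factor $t+1$ together with the divided difference $(b_0^q-b_1^q)/(b_0-b_1)$. Expanding this gives the sums $S_{k-r,t-1}(B\setminus\{b_0\})$, to which the inductive hypothesis applies. The second form follows by exactly the regrouping by underlying sets that you use in your last step. The appendix then obtains \thref{coeffs2} and \thref{homo} as the special case $B=S$, $t=\abs{S}-1$ (\thref{interpolationcoefficients}).

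You instead observe that grouping the ordered tuples by their underlying $(t+1)$-set reduces the general statement to the case $\abs{B}=t+1$. That case is precisely Sylvester's identity \thref{homo}, together with the defining vanishing of $c_C$ when $-t\le k<0$.

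Your route is shorter and gives the slightly stronger undivided identity $S_{k,t}(B)=t!\sum_{C}h_k(C)$. The paper's route is self-contained and elementary, and it is what allows the appendix to derive \thref{coeffs2} and \thref{homo}. Consequently your proof is non-circular only if you take \thref{coeffs2} and \thref{homo} from an independent source. Examples are Kalmynin's generating-function proofs (Lemmas 1 and 2 of \cite{kalmynin2025additive}) or any classical proof of Sylvester's identity. It could not replace the paper's appendix argument, where the dependence goes the other way.

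One small slip: the parenthetical ``applicable since $C\subsetneq\F_p$'' fails when $t+1=p$ and $B=\F_p$. In that case both sides of the statement vanish, because $t+1\equiv0$ and $(t+1)!\equiv0$ in $\F_p$, so nothing is lost.
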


The following symmetrisation identity evaluates the first-order sums
in terms of power sums.

\begin{lemma}\thlabel{firstordersymmetrisation}
Let $T\sub\F_p$ be finite and let $m\geq0$. Then
\[2S_{m,1}\bra{T}=\sum_{i=0}^m p_{m-i}\bra{T}p_i\bra{T}-\bra{m+1}p_m\bra{T}.\]
\end{lemma}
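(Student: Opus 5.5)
We prove the identity by symmetrising directly. By definition,
\[S_{m,1}\bra{T}=\sum_{\substack{t_0,t_1\in T\\ t_0\neq t_1}}\frac{t_0^{m+1}}{t_0-t_1}.\]
Swapping the names of $t_0$ and $t_1$ gives the same sum, so we average the two expressions:
\[2S_{m,1}\bra{T}=\sum_{t_0\neq t_1}\frac{t_0^{m+1}-t_1^{m+1}}{t_0-t_1}.\]
Next we use the geometric-sum factorisation
\[\frac{x^{m+1}-y^{m+1}}{x-y}=\sum_{i=0}^m x^{m-i}y^i,\]
which is a polynomial identity and so holds in $\F_p$ whenever $x\neq y$. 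This yields
\[2S_{m,1}\bra{T}=\sum_{i=0}^m\sum_{t_0\neq t_1}t_0^{m-i}t_1^{i}.\]
This is also the $t=1$ case of \thref{generalsums}, so we could cite that lemma instead, but the direct argument is self-contained.

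It remains to remove the restriction $t_0\neq t_1$. For each fixed $i$, the sum over all ordered pairs $(t_0,t_1)\in T^2$ factorises as $p_{m-i}\bra{T}p_i\bra{T}$. The diagonal pairs $t_0=t_1$ contribute $\sum_{t\in T}t^{m}=p_m\bra{T}$. Hence
\[\sum_{t_0\neq t_1}t_0^{m-i}t_1^{i}=p_{m-i}\bra{T}p_i\bra{T}-p_m\bra{T}.\]
Summing over $0\le i\le m$ subtracts the diagonal term $m+1$ times in total. This gives
\[2S_{m,1}\bra{T}=\sum_{i=0}^m p_{m-i}\bra{T}p_i\bra{T}-\bra{m+1}p_m\bra{T}.\]

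One convention needs checking. Here $p_0\bra{T}=\abs{T}$, because every element contributes $t^0=1$, including $t=0$ if $0\in T$. With this convention the endpoint terms $i=0$ and $i=m$ are handled correctly, and the case $m=0$ also works: the left side is $2\sum_{t_0\neq t_1}\frac{t_0}{t_0-t_1}=\abs{T}(\abs{T}-1)$ by the same averaging, and the right side is $\abs{T}^2-\abs{T}$.

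The argument has no real obstacle. The only care needed is the diagonal bookkeeping: the term $p_m\bra{T}$ is removed once for each of the $m+1$ values of $i$.
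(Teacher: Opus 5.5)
Your proof is correct and is essentially the paper's argument. The paper obtains $2S_{m,1}(T)=\sum_{i=0}^m\sum^*_{x,y\in T}x^{m-i}y^i$ by citing \thref{generalsums} with $t=1$; that lemma's appendix proof, at $t=1$, is exactly your swap-and-average step followed by the geometric-sum factorisation. The paper then removes the diagonal contribution $p_m(T)$ once for each $i$, just as you do.
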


\begin{proof}
Applying \thref{generalsums} with $t=1$ gives
\[2S_{m,1}\bra{T}=\sum_{i=0}^m\sum_{x,y\in T}^{*}x^{m-i}y^i.\]
For each $0\leq i\leq m$, we have
\[\sum_{x,y\in T}^{*}x^{m-i}y^i=p_{m-i}\bra{T}p_i\bra{T}-p_m\bra{T},\]
since the unrestricted sum is $p_{m-i}\bra{T}p_i\bra{T}$ and the diagonal contribution is
\[\sum_{x\in T}x^{m-i}x^i=p_m\bra{T}.\]
Summing over $i$ therefore gives
\[2S_{m,1}\bra{T}=\sum_{i=0}^m p_{m-i}\bra{T}p_i\bra{T}-\bra{m+1}p_m\bra{T},\]
as required.
\end{proof}

\begin{corollary}\thlabel{Sk1firstindex}
For $T=A$ and $T=B$, we have
\[S_{k,1}\bra{T}=\bra{\alpha-\frac{k+1}{2}}p_k\bra{T}.\]
\end{corollary}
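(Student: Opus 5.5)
Apply \thref{firstordersymmetrisation} with $m=k$ and $T\in\{A,B\}$, then use the vanishing of the low power sums to reduce the convolution sum to its two endpoint terms.

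First, fix $T=A$. By definition of $k$ (after the normalisation $p_1(A)=0$), we have $p_i(A)=0$ for all $1\le i\le k-1$, while $p_0(A)=\abs{A}=\al$. For $T=B$, \thref{zerothorder} gives $p_j(B)=-p_j(A)$ for $1\le j\le k$. Hence $p_i(B)=0$ for $1\le i\le k-1$ as well, and $p_0(B)=\abs{B}=\al$ by \thref{kalmyninsize}. So in both cases the hypotheses are identical: $p_0(T)=\al$, the power sums $p_1(T),\dots,p_{k-1}(T)$ vanish, and $p_k(T)\ne0$ (the last fact is not needed for the identity itself).

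Next, expand the sum $\sum_{i=0}^k p_{k-i}(T)p_i(T)$. For $1\le i\le k-1$, both indices $i$ and $k-i$ lie in $[1,k-1]$, so every such term vanishes. Only $i=0$ and $i=k$ survive, each contributing $p_0(T)p_k(T)=\al\,p_k(T)$. Therefore
\[2S_{k,1}(T)=2\al\,p_k(T)-(k+1)p_k(T).\]
Dividing by $2$, which is legitimate since $p$ is odd (as $\mu_d$ is proper with $d=\al^2\geq 9$, we have $p>2$), gives the claimed formula $S_{k,1}(T)=\bra{\al-\frac{k+1}{2}}p_k(T)$.

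There is no real obstacle here. The only points needing care are that $\abs{B}=\al$, so that $p_0(B)=\al$, which is supplied by \thref{kalmyninsize}, and that $2$ is invertible in $\F_p$.
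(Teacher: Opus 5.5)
Your proof is correct and is essentially the paper's argument: apply \thref{firstordersymmetrisation} with $m=k$, use the definition of $k$ and \thref{zerothorder} to kill the terms $p_{k-i}(T)p_i(T)$ for $1\le i\le k-1$, and keep the two endpoint terms $\alpha\, p_k(T)$. Your remarks that $p_0(B)=\alpha$ comes from \thref{kalmyninsize} and that $2$ is invertible since $p>2$ are small details the paper leaves implicit.
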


\begin{proof}
By \thref{zerothorder}, $k$ is the least positive index for which $p_k\bra{T}\neq0$. Hence every term
\[p_{k-i}\bra{T}p_i\bra{T},\qquad 1\leq i\leq k-1,\]
vanishes. The two endpoint terms in \thref{firstordersymmetrisation} each equal $\alpha p_k\bra{T}$, and the result follows.
\end{proof}

We refer to the following as a \emph{first-order} relation, due to the use of \thref{lineartransfer}.

\begin{lemma}\thlabel{keven}
The integer $k$ is even.
\end{lemma}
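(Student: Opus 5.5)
The plan is to sum the first-order transfer identity of \thref{lineartransfer}, weighted by $b^{k+1}$, over $b\in B$. Both sides will turn into the sums $S_{k,1}$, which \thref{Sk1firstindex} evaluates, and the resulting scalar identity should force $k$ to be even.

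I will use \thref{lineartransfer} in both directions. As stated, it says that for each $b\in B$,
\[\sum_{a\in A}\frac{1}{a+b}=C_1\sum_{b'\neq b}\frac{1}{b-b'},\qquad C_1=\frac{\al}{\al-1}.\]
The hypotheses are symmetric in $A$ and $B$ because $\abs{A}=\abs{B}=\al$. So the same identity holds with the roles swapped: for each $a\in A$, $\sum_{b}\frac{1}{a+b}=C_1\sum_{a'\neq a}\frac{1}{a-a'}$.

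Set $m=k+1$.

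\emph{Right-hand side.} Multiplying the identity by $b^{m}$ and summing over $b\in B$, the right-hand side becomes
\[C_1\sum^*_{b,b'}\frac{b^{k+1}}{b-b'}=C_1S_{k,1}(B).\]

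\emph{Left-hand side.} Here I use polynomial division,
\[\frac{b^m}{a+b}=\sum_{j=0}^{m-1}(-a)^j b^{m-1-j}+\frac{(-a)^m}{a+b},\]
and sum over $a\in A$ and $b\in B$. The polynomial part gives $\sum_{j=0}^{k}(-1)^j p_j(A)p_{k-j}(B)$. Every term with $1\le j\le k-1$ vanishes, because $p_j(A)=0$ there by the definition of $k$. Only the endpoints survive, giving
\[\al p_k(B)+(-1)^k\al p_k(A).\]
For the remainder term $\sum_a(-a)^m\sum_b\frac{1}{a+b}$, I apply the swapped transfer identity, which turns it into
\[(-1)^{k+1}C_1S_{k,1}(A).\]

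\emph{The scalar identity.} Write $P=p_k(A)\neq0$ and $c=\al-\frac{k+1}{2}$. By \thref{zerothorder}, $p_k(B)=-P$, and by \thref{Sk1firstindex}, $S_{k,1}(A)=cP$ and $S_{k,1}(B)=-cP$. Substituting these, the equality of the two sides becomes
\[-C_1cP=-\al P+(-1)^k\al P+(-1)^{k+1}C_1cP.\]

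\emph{Parity.} If $k$ is even, this identity reduces to a tautology. If $k$ is odd, dividing by $P$ gives $2C_1c=2\al$, that is $c=\al-1$, and hence $\frac{k+1}{2}=1$ in $\F_p$. Since $2\le k\le\al<p$, this is impossible, so $k$ must be even.

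The only delicate points are bookkeeping:
\begin{itemize}
\item checking that the swapped version of \thref{lineartransfer} really holds, which it does because its derivation used only $\abs{A}=\abs{B}=\al$ and $A+B=\mu_d$;
\item tracking the signs $(-1)^j$ and $(-1)^m$ carefully;
\item noting that all the integers we divide by ($2$, $\al$, $\al-1$) are nonzero modulo $p$, since $p=M\al^2+1$ is large compared with them.
\end{itemize}
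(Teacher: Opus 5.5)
Your argument is correct and is essentially the paper's proof. The paper also weights the transfer identity and its $A\leftrightarrow B$ swap by $b^{k+1}$ and $a^{k+1}$; for odd $k$ it subtracts the two and expands $\frac{a^{k+1}-b^{k+1}}{a+b}$, which is exactly your polynomial-division step with $(-1)^{k+1}=1$, and it reaches the same contradiction $k\equiv 1 \pmod p$ — your version simply keeps the parity general and observes that the identity becomes a tautology when $k$ is even.
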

\begin{proof}
Assume for a contradiction that $k$ is odd. By \thref{lineartransfer}, for any $b\in B$ we have
\[\sum_{a\in A}\frac{1}{a+b}=\frac{\al}{\al-1}\sum_{b'\in B\setminus\{b\}}\frac{1}{b-b'},\]
and hence
\begin{equation}\label{bkplusone}
\sum_{a\in A,b\in B}\frac{b^{k+1}}{a+b}=\frac{\al}{\al-1}S_{k,1}\bra{B}.
\end{equation}
Similarly,
\begin{equation}\label{akplusone}
\sum_{a\in A,b\in B}\frac{a^{k+1}}{a+b}=\frac{\al}{\al-1}S_{k,1}\bra{A}.
\end{equation}
Subtracting \eqref{bkplusone} from \eqref{akplusone}, we get
\[\sum_{a\in A,b\in B}\frac{a^{k+1}-b^{k+1}}{a+b}=\frac{\al}{\al-1}\bra{S_{k,1}\bra{A}-S_{k,1}\bra{B}}.\]
Since $k$ is odd, we have
\[\frac{a^{k+1}-b^{k+1}}{a+b}=a^k-a^{k-1}b+\cdots-b^k.\]
As $k$ is the first index for both $A$ and $B$, all mixed terms vanish after summing over $a\in A$ and $b\in B$. Therefore the left-hand side is
\[\al p_k\bra{A}-\al p_k\bra{B}=2\al p_k\bra{A},\]
using $p_k\bra{B}=-p_k\bra{A}$. On the other hand, by \thref{Sk1firstindex} and its analogue with $A$ and $B$ swapped,
\[S_{k,1}\bra{A}-S_{k,1}\bra{B}=\bra{\al-\frac{k+1}{2}}\bra{p_k\bra{A}-p_k\bra{B}}=2\bra{\al-\frac{k+1}{2}}p_k\bra{A}.\]
Hence
\[2\al p_k\bra{A}=\frac{2\al}{\al-1}\bra{\al-\frac{k+1}{2}}p_k\bra{A}.\]
Since $2\alpha p_k\bra{A}\neq0$ in $\F_p$, we may cancel it and obtain
\[1=\frac{\alpha-\frac{k+1}{2}}{\alpha-1},\]
and hence
\[k\equiv1\pmod p.\]
But $1\leq k\leq\alpha<p$, and hence $k=1$, which is a contradiction. Therefore $k$ is even.
\end{proof}

\section{Second-order relations}\label{section5}
Throughout this section, we retain the standing assumptions and notation from the previous sections. In particular, $\abs{A}=\abs{B}=\al$, and $k$ denotes the least positive integer such that $p_k\bra{A}\neq0$. We have $2\leq k\leq\al$, and by \thref{keven}, the integer $k$ is even. Our goal in this section is to prove the following polynomial congruence.

\begin{prop}\thlabel{firstpoly}
We have
\[P\bra{\al,k}\equiv0\pmod p,\]
where
\[P\bra{\al,x}=3\bra{x^2-3x-2}\al^2+4\bra{x^2+2}\al-\bra{x^2-3x+2}.\]
\end{prop}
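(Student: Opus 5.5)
The plan is to mimic the proof of \thref{keven}, but at second order. I would multiply the identity of \thref{identitiesII} by a suitable power of $b$ and sum over $b\in B$. I would also use the analogous identity with the roles of $A$ and $B$ swapped; this is legitimate because the whole of Section \ref{section3} is symmetric in $A,B$. Concretely, I would multiply the second form of \thref{identitiesII},
\[p_2\bra{B_b^{-1}}=C'_{2,1}p_1\bra{\tfrac{1}{A+b}}^2-C'_{2,2}p_2\bra{\tfrac{1}{A+b}},\]
by $b^{k+2}$ and sum over $b\in B$. Then I would do the same with $A$ and $B$ interchanged, multiplying by $a^{k+2}$ and summing over $a\in A$. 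Finally I would take the combination (sum or difference) which, given that $k$ is even and $p_k\bra{B}=-p_k\bra{A}$, makes the leading terms survive rather than cancel.

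The terms to evaluate are of three types. First, $\sum_b b^{k+2}p_2\bra{B_b^{-1}}=\sum^*_{b,b'}\frac{b^{k+2}}{(b-b')^2}$. I would express this through $S_{k,1}$ and a two-reciprocal sum: writing $\frac{b^{k+2}}{(b-b')^2}$ via the identity $\frac{1}{(b-b')^2}$ and symmetrising in $(b,b')$, it becomes a combination of $S_{k,1}\bra{B}$ and the sum $\sum^*\frac{b^{k+2}-\ldots}{\ldots}$. Expanding via partial fractions, only the terms $p_k\bra{B}$ times explicit polynomial coefficients in $\al,k$ survive, by minimality of $k$ and \thref{zerothorder}. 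Second, the diagonal-type term $\sum_{a,b}\frac{b^{k+2}}{(a+b)^2}$. Third, the genuinely second-order term $\sum_b b^{k+2}\bigl(\sum_a\frac{1}{a+b}\bigr)^2=\sum_{a,a',b}\frac{b^{k+2}}{(a+b)(a'+b)}$. For the latter I would split into $a=a'$ (giving the second type) and $a\neq a'$. In the $a\ne a'$ part I use $\frac{1}{(a+b)(a'+b)}=\frac{1}{a'-a}\bigl(\frac{1}{a+b}-\frac{1}{a'+b}\bigr)$, and then reduce via \thref{lineartransfer} applied in the other variable: $\sum_b\frac{1}{a+b}=\frac{\al}{\al-1}p_1\bigl(A_a^{-1}\bigr)$. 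This converts mixed sums into the sums $S_{k,t}$ with $t\le 2$, evaluated by \thref{generalsums}.

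After taking the symmetric or antisymmetric combination of the $A$- and $B$-versions, the odd-degree polynomial division $\frac{a^{n}\pm b^{n}}{a+b}$ (as in \thref{keven}) kills all mixed products of power sums below degree $k$. Each side then becomes an explicit rational function of $\al,k$ times $p_k\bra{A}$. For $S_{k,2}$, \thref{generalsums} together with the vanishing of $p_1,\dots,p_{k-1}$ gives $\sum_{|C|=3}h_k\bra{C}$ as a combination of $\binom{\al}{2}p_k$ and similar terms. Cancelling the nonzero factor $p_k\bra{A}$, and clearing the denominators $\al$, $\al-1$, $\al+2$, $\al^2-2$ (all nonzero mod $p$ since $p=M\al^2+1$ is large), I expect to get $P\bra{\al,k}\equiv 0\pmod p$. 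Throughout I would use $d=\al^2$ and $\al^2\not\equiv 0$.

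The main obstacle is the second-order term $\sum_{a\neq a'}\sum_b \frac{b^{k+2}}{(a+b)(a'+b)}$. Here one must also choose the exponent $k+2$ correctly, so that after division the degree lands exactly at $k$ and higher-order leftovers vanish. I would first check that the bookkeeping with $\sum^*$ over triples and the $(t+1)!$ factors in \thref{generalsums} produces consistent coefficients. As a sanity check, the final relation should specialise, when $p=2\al^2+1$, to Kalmynin's relation.
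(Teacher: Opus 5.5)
There is a genuine gap: the quantity you plan to evaluate directly cannot be evaluated that way. With the $p_2$ form of \thref{identitiesII}, summing against $b^{k+2}$ puts $Z(B):=\sum^*_{b\neq b'}\frac{b^{k+2}}{(b-b')^2}$ on the left. This is not reducible to a multiple of $p_k(B)$ by partial fractions and the vanishing of $p_1,\dots,p_{k-1}$. Symmetrising gives the numerator $b^{k+2}+b'^{k+2}$, which is not divisible by $b-b'$, so $Z(B)$ is a genuinely rational function of $B$ with a double pole on the diagonal. Already over $\mathbb{Q}$ with $k=2$ and $\al=3$, the sets $\{1,-1,0\}$ and $\{1,2,-3\}$ both have $p_1=0$ but give $Z(B)/p_2(B)=5/4$ and $5201/2800$ respectively.

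The same defect recurs in your treatment of the $a\neq a'$ part. After partial fractions and division by $a+b$, the leftover is $\sum_a a^{k+2}p_1(A_a^{-1})\sum_b\frac{1}{a+b}$, where $A_a^{-1}=\{\frac{1}{a-a'}:a'\neq a\}$. Replacing $\sum_b\frac{1}{a+b}$ by $\frac{\al}{\al-1}p_1(A_a^{-1})$ gives $\frac{\al}{\al-1}\bigl(S_{k,2}(A)+Z(A)\bigr)$, not a combination of sums $S_{k,t}$ alone. If you keep $Z(A)$ and $Z(B)$ as unknowns, the relations your plan produces (the difference of the $p_2$-form identities, the difference of the cross relations, and an explicit $Y$-difference) can be solved for every $(\al,k)$, so no congruence comes out. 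The relation you have lost is the squared first-order transfer $\sum_b b^{k+2}p_1(B_b^{-1})^2=\bigl(\frac{\al-1}{\al}\bigr)^2\sum_b b^{k+2}p_1\bigl(\frac{1}{A+b}\bigr)^2$.

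The fix has two parts.
First, use the $e_2$ form of \thref{identitiesII}, or equivalently split $p_2=p_1^2-2e_2$ and apply \thref{lineartransfer} to the $p_1^2$ part. The internal side then becomes $2\sum_b b^{k+2}e_2(B_b^{-1})=S_{k,2}(B)$, which \thref{generalsums} evaluates as $\bigl(\al^2-(k+2)\al+\frac{(k+1)(k+2)}{3}\bigr)p_k(B)$; this gives \thref{secondorderequation}.
Second, in the $X$-reduction convert in the other direction, $p_1(A_a^{-1})=\frac{\al-1}{\al}\sum_b\frac{1}{a+b}$. This yields $-X_k(B)=(\al^2-k\al)p_k(B)+\frac{2(\al-1)}{\al}\bigl(X_k(A)+Y_k(A)\bigr)$.

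Even then, $X_k(B)$ and $Y_k(B)$ cannot be evaluated term by term. The paper first proves $X_k^+=Y_k^+=0$ from two homogeneous relations with nonzero determinant, and then computes the antisymmetric parts. Your instinct to take the difference is in fact enough, and it skips that symmetric step:
\begin{itemize}
\item dividing twice by $a+b$ and using \thref{lineartransfer} gives $Y_k(B)-Y_k(A)=\frac{\al(4\al-k^2-k-2)}{2(\al-1)}p_k(B)$;
\item subtracting the two cross relations gives $\frac{\al-2}{\al}\bigl(X_k(B)-X_k(A)\bigr)=2(\al^2-k\al)p_k(B)-\frac{2(\al-1)}{\al}\bigl(Y_k(B)-Y_k(A)\bigr)$;
\item inserting both into the difference of the $A$- and $B$-versions of \thref{secondorderequation}, whose left side is $2S_{k,2}(B)$, yields $P(\al,k)\equiv0\pmod p$.
\end{itemize}
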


We define
\begin{align*}
X_m\bra{B}&=\sum_{\substack{b\in B\\ a,a'\in A\\ a\neq a'}}\frac{b^{m+2}}{\bra{b+a}\bra{b+a'}},\\
Y_m\bra{B}&=\sum_{a\in A,b\in B}\frac{b^{m+2}}{\bra{b+a}^2}.
\end{align*}
We define $X_m\bra{A}$ and $Y_m\bra{A}$ analogously by swapping $A$ and $B$.
\smallbreak
We refer to the relations in this section as \emph{second-order} relations, since we use \thref{quadtransfer}, in addition to \thref{lineartransfer}.

\begin{lemma}\thlabel{secondorderequation}
For every $m\geq 1$, we have
\[S_{m,2}\bra{B}=\frac{\bra{\al-1}\bra{\al-2}}{\al^2\bra{\al+2}}X_m\bra{B}+\frac{\bra{\al-1}\bra{\al^3-\al-2}}{\al^2\bra{\al+2}}Y_m\bra{B}.\]
\end{lemma}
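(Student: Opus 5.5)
We sum the identity of \thref{identitiesII} over $b\in B$, weighted by $b^{m+2}$. The left-hand side should become $S_{m,2}(B)$, and the right-hand side should become a combination of $X_m(B)$ and $Y_m(B)$.

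Start with the left-hand side. By definition,
\[S_{m,2}\bra{B}=\sum_{b\in B}b^{m+2}\sum_{\substack{b_1,b_2\in B\setminus\{b\}\\ b_1\neq b_2}}\frac{1}{\bra{b-b_1}\bra{b-b_2}}.\]
The inner sum runs over ordered pairs of distinct elements of $B_b^{-1}$, so it equals $2e_2\bra{B_b^{-1}}$. Hence
\[S_{m,2}\bra{B}=2\sum_{b\in B}b^{m+2}e_2\bra{B_b^{-1}}.\]

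Now expand the right-hand side. For fixed $b$, splitting the square of $p_1\bra{\frac{1}{A+b}}$ into diagonal and off-diagonal parts gives
\[p_1\bra{\frac{1}{A+b}}^2=\sum_{a\neq a'}\frac{1}{(a+b)(a'+b)}+p_2\bra{\frac{1}{A+b}}.\]
Multiplying by $b^{m+2}$ and summing over $b\in B$, we get
\[\sum_b b^{m+2}p_1\bra{\frac{1}{A+b}}^2=X_m(B)+Y_m(B),\qquad \sum_b b^{m+2}p_2\bra{\frac{1}{A+b}}=Y_m(B).\]
Combining with \thref{identitiesII}, this yields
\[S_{m,2}(B)=2C_{2,1}X_m(B)+2\bra{C_{2,1}+C_{2,2}}Y_m(B).\]

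It remains to match the coefficients. First, $2C_{2,1}=\frac{(\al-1)(\al-2)}{\al^2(\al+2)}$, which is the stated coefficient of $X_m(B)$. Second,
\[2\bra{C_{2,1}+C_{2,2}}=\frac{(\al-1)\sqbra{(\al-2)+\al(\al^2-2)}}{\al^2(\al+2)}=\frac{(\al-1)(\al^3-\al-2)}{\al^2(\al+2)},\]
which is the stated coefficient of $Y_m(B)$.

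The main point to watch is the factor of $2$: because $S_{m,2}$ sums over ordered tuples, it produces $2e_2$ rather than $e_2$. The hypothesis $m\ge1$ only ensures that the exponent $m+2$ is admissible in the definition of $S_{m,2}$, and it is not otherwise used. Denominators such as $a+b$ never vanish, since $a+b\in\mu_d$.
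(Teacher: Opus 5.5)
Your proposal is correct and is essentially the paper's own proof: you write $S_{m,2}(B)=2\sum_{b\in B}b^{m+2}e_2(B_b^{-1})$, substitute the corollary expressing $e_2(B_b^{-1})$ via $C_{2,1},C_{2,2}$, split $p_1(\frac{1}{A+b})^2$ into off-diagonal and diagonal parts to get $X_m(B)+Y_m(B)$ and $Y_m(B)$, and then simplify $2C_{2,1}$ and $2(C_{2,1}+C_{2,2})$. One minor correction to your closing remark: the definition of $S_{m,t}$ only requires $m\geq -t=-2$, so the hypothesis $m\geq 1$ is not actually needed for admissibility of the exponent.
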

\begin{proof}
By definition,
\[S_{m,2}\bra{B}=2\sum_{b\in B}b^{m+2}e_2\bra{B_b^{-1}}.\]
Using \thref{identitiesII}, we have
\[e_2\bra{B_b^{-1}}=C_{2,1}p_1\bra{\frac{1}{A+b}}^2+C_{2,2}p_2\bra{\frac{1}{A+b}}.\]
Therefore
\[S_{m,2}\bra{B}=2C_{2,1}\sum_{b\in B}b^{m+2}p_1\bra{\frac{1}{A+b}}^2+2C_{2,2}\sum_{b\in B}b^{m+2}p_2\bra{\frac{1}{A+b}}.\]
Now
\[\sum_{b\in B}b^{m+2}p_1\bra{\frac{1}{A+b}}^2=X_m\bra{B}+Y_m\bra{B},\]
and
\[\sum_{b\in B}b^{m+2}p_2\bra{\frac{1}{A+b}}=Y_m\bra{B}.\]
Hence
\[S_{m,2}\bra{B}=2C_{2,1}X_m\bra{B}+2\bra{C_{2,1}+C_{2,2}}Y_m\bra{B}.\]
Substituting
\[C_{2,1}=\frac{\bra{\al-1}\bra{\al-2}}{2\al^2\bra{\al+2}},\qquad C_{2,2}=\frac{\bra{\al-1}\bra{\al^2-2}}{2\al\bra{\al+2}},\]
we obtain
\[2C_{2,1}=\frac{\bra{\al-1}\bra{\al-2}}{\al^2\bra{\al+2}},\qquad 2\bra{C_{2,1}+C_{2,2}}=\frac{\bra{\al-1}\bra{\al^3-\al-2}}{\al^2\bra{\al+2}},\]
which gives the claimed identity.
\end{proof}

We will use the combinatorial identity \thref{generalsums}, proved in the appendix, in the proof of the following.
\begin{lemma}\thlabel{Sk2firstindex}
We have
\[S_{k,2}\bra{B}=\bra{\al^2-\bra{k+2}\al+\frac{\bra{k+1}\bra{k+2}}{3}}p_k\bra{B}.\]
\end{lemma}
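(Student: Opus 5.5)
Apply \thref{generalsums} with $t=2$ and $T=B$. This gives
\[3S_{k,2}\bra{B}=\sum_{\substack{i_0+i_1+i_2=k\\ i_0,i_1,i_2\geq0}}\ \sum^*_{B^3}b_0^{i_0}b_1^{i_1}b_2^{i_2}.\]
The plan is to evaluate each restricted triple sum by inclusion--exclusion over coincidences of indices. For fixed exponents the formula is
\[\sum^*_{B^3}x^{i_0}y^{i_1}z^{i_2}=p_{i_0}p_{i_1}p_{i_2}-p_{i_0+i_1}p_{i_2}-p_{i_0+i_2}p_{i_1}-p_{i_1+i_2}p_{i_0}+2p_{k},\]
where all power sums are of $B$. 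We then sum over all compositions $(i_0,i_1,i_2)$ of $k$.

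By \thref{zerothorder}, $k$ is the least positive index with $p_j\bra{B}\neq0$, and $p_0\bra{B}=\al$. So any product of power sums whose indices are positive, less than $k$, and total $k$ vanishes. This means only terms in which every nonzero index equals $k$ survive.

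The four kinds of terms are handled as follows.
\begin{itemize}
\item The triple product $p_{i_0}p_{i_1}p_{i_2}$ survives only for the $3$ compositions with one index equal to $k$. It contributes $3\al^2p_k$.
\item The term $p_{i_0+i_1}p_{i_2}$ survives when $i_2=0$ or $i_2=k$.
\begin{itemize}
\item If $i_2=0$, there are $k+1$ choices of $(i_0,i_1)$, each giving $\al p_k$.
\item If $i_2=k$, then $i_0=i_1=0$, giving $p_0p_k=\al p_k$.
\end{itemize}
So each of the three pair terms contributes $(k+2)\al p_k$, for $3(k+2)\al p_k$ in total.
\item The diagonal term $2p_k$ is counted once per composition. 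There are $\binom{k+2}{2}$ compositions, giving $(k+1)(k+2)p_k$.
\end{itemize}

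Combining these,
\[3S_{k,2}\bra{B}=\bra{3\al^2-3(k+2)\al+(k+1)(k+2)}p_k\bra{B}.\]
Dividing by $3$ gives the claim. This is valid in $\F_p$ since $p>3$: the prime $p=M\al^2+1$ with $\al\ge3$ is large.

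The main point needing care is the inclusion--exclusion coefficient $+2$ for the full diagonal and the exact count of surviving pair terms, in particular not missing the $i_2=k$ case with $i_0=i_1=0$. The same check can be done against \thref{Sk1firstindex}, where the $t=1$ analogue gives $2\al p_k-(k+1)p_k$.
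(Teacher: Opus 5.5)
Your proposal is correct and follows essentially the same route as the paper's proof. Both apply \thref{generalsums} with $t=2$, use the inclusion--exclusion formula with the $+2p_k$ full-diagonal term, and make the same three counts $3\al^2p_k$, $3(k+2)\al p_k$ and $(k+1)(k+2)p_k$; your remark that division by $3$ is legitimate because $p=M\al^2+1\geq 19$ is a small justification the paper leaves implicit.
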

\begin{proof} Applying \thref{generalsums} with $t=2$ gives 
\[3S_{k,2}\bra{B}=\sum_{\substack{i+j+\ell=k\\i,j,\ell\geq0}}\sum_{x,y,z\in B}^{*}x^iy^jz^\ell.\] 
For fixed $i,j,\ell$, inclusion-exclusion over the equalities $x=y$, $x=z$ and $y=z$ gives 
\[\sum_{x,y,z\in B}^{*}x^iy^jz^\ell=p_i\bra{B}p_j\bra{B}p_\ell\bra{B}-p_{i+j}\bra{B}p_\ell\bra{B}-p_{i+\ell}\bra{B}p_j\bra{B}-p_i\bra{B}p_{j+\ell}\bra{B}+2p_k\bra{B}.\] 
Since $k$ is the least positive integer for which $p_k\bra{B}\neq0$, the only nonzero terms in 
\[\sum_{\substack{i+j+\ell=k\\i,j,\ell\geq0}}p_i\bra{B}p_j\bra{B}p_\ell\bra{B}\] 
are the three permutations of $\bra{k,0,0}$. As $p_0\bra{B}=\alpha$, their total contribution is 
\[3\alpha^2p_k\bra{B}.\] Moreover, 
\[\sum_{\substack{i+j+\ell=k\\i,j,\ell\geq0}}p_{i+j}\bra{B}p_\ell\bra{B}=\sum_{u=0}^k\bra{u+1}p_u\bra{B}p_{k-u}\bra{B}=\bra{k+2}\alpha p_k\bra{B}.\] 
Indeed, the terms with $1\leq u\leq k-1$ vanish, while the terms $u=0$ and $u=k$ contribute $\alpha p_k\bra{B}$ and $\bra{k+1}\alpha p_k\bra{B}$, respectively. The other two pairwise-equality sums have the same value. Finally, there are $\binom{k+2}{2}$ triples of nonnegative integers satisfying $i+j+\ell=k$, and hence 
\[\sum_{\substack{i+j+\ell=k\\i,j,\ell\geq0}}2p_k\bra{B}=\bra{k+1}\bra{k+2}p_k\bra{B}.\] 
Combining these evaluations gives 
\[3S_{k,2}\bra{B}=\left(3\alpha^2-3\bra{k+2}\alpha+\bra{k+1}\bra{k+2}\right)p_k\bra{B}.\] 
Dividing by $3$ proves the result. 
\end{proof}

\subsection{Symmetric identities}
For brevity, we write
\[X_k^+:=X_k\bra{A}+X_k\bra{B},\qquad Y_k^+:=Y_k\bra{A}+Y_k\bra{B}.\]
Our goal in this section is to show the following. 
\begin{lemma}\thlabel{secondordersymvanish}
We have
\[X_k^+=Y_k^+=0.\]
\end{lemma}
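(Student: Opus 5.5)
The plan is to exploit the fact that every $a+b$ lies in $\mu_d$. This lets us replace reciprocals by polynomials, $\frac{1}{a+b}=\bra{a+b}^{d-1}$ and $\frac{1}{\bra{a+b}^2}=\bra{a+b}^{d-2}$. We then symmetrise in $A$ and $B$ so that, as in \thref{zerothorder}, only "endpoint" products of power sums survive and these cancel in pairs. I would treat $Y_k^+$ first, since it involves a single reciprocal factor squared, and then $X_k^+$ via the combination $X_k\bra{B}+Y_k\bra{B}=\sum_{b}b^{k+2}p_1\bra{\frac{1}{A+b}}^2$.

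\textbf{Step 1: $Y_k^+$.} Since $k+2$ is even by \thref{keven}, I would write
\[Y_k^+=\sum_{a\in A,b\in B}\frac{a^{k+2}+b^{k+2}}{\bra{a+b}^2}.\]
Put $s=a+b$ and expand $a^{k+2}=\bra{s-b}^{k+2}$. This gives the decomposition
\[a^{k+2}+b^{k+2}=s^2Q(a,b)+2b^{k+2}-(k+2)s\,b^{k+1},\]
where $Q$ is a homogeneous polynomial of degree $k$. By the same expansion with the roles of $a$ and $b$ swapped, there is a homogeneous $\tilde Q$ of degree $k$ with
\[a^{k+2}+b^{k+2}=s^2\tilde Q(a,b)+2a^{k+2}-(k+2)s\,a^{k+1}.\]
Averaging the two expansions gives
\[Y_k^+=\tfrac12\sum_{a,b}\bra{Q+\tilde Q}+Y_k^+-\tfrac{k+2}{2}\sum_{a,b}\frac{a^{k+1}+b^{k+1}}{a+b}.\]
The polynomial sums are evaluated by minimality of $k$: only the $a^k$ and $b^k$ monomials survive, and they combine with $p_k\bra{B}=-p_k\bra{A}$ from \thref{zerothorder}. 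The last sum is $\frac{\al}{\al-1}\bra{S_{k,1}\bra{A}+S_{k,1}\bra{B}}$ by \thref{lineartransfer}. By \thref{Sk1firstindex} and $p_k\bra{A}+p_k\bra{B}=0$, this sum vanishes. This identity is tautological in $Y_k^+$, so by itself it only gives consistency. To actually get $Y_k^+=0$, I would instead use $\frac{1}{s^2}=s^{d-2}$ and write
\[Y_k^+=\sum_{w\in\mu_d}w^{-2}\bra{a_w^{k+2}+b_w^{k+2}},\]
where $w=a_w+b_w$ is the unique representation. Substituting $a_w^{k+2}+b_w^{k+2}=w^{k+2}-\sum_{i=1}^{k+1}\binom{k+2}{i}a_w^ib_w^{k+2-i}$, the term $\sum_w w^{k}$ vanishes because $k<d$. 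The remaining mixed terms are then reduced with the pair of identities $\sum_a\frac{1}{a+b}=\frac{\al}{\al-1}p_1\bra{B_b^{-1}}$ and its $A\leftrightarrow B$ mirror, namely $\sum_b\frac{1}{a+b}=\frac{\al}{\al-1}p_1\bra{A_a^{-1}}$. The latter holds since the hypotheses are symmetric in $A$ and $B$.

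\textbf{Step 2: $X_k^+$.} I would write
\[X_k^++Y_k^+=\sum_{b}b^{k+2}p_1\bra{\tfrac{1}{A+b}}^2+\sum_{a}a^{k+2}p_1\bra{\tfrac{1}{B+a}}^2.\]
By \thref{lineartransfer} and its mirror, this equals $\bra{\frac{\al}{\al-1}}^2\bra{\sum_b b^{k+2}p_1\bra{B_b^{-1}}^2+\sum_a a^{k+2}p_1\bra{A_a^{-1}}^2}$. Expanding $p_1^2=2e_2+p_2$ gives $S_{k,2}$-type sums, which are controlled by \thref{Sk2firstindex} and cancel between $A$ and $B$ because $p_k\bra{A}=-p_k\bra{B}$. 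The diagonal sums $\sum^*\frac{b^{k+2}}{\bra{b-b'}^2}$ are antisymmetrised under $b\leftrightarrow b'$, using that $k+2$ is even, and become polynomial. They then reduce, again by minimality of $k$, to multiples of $p_k\bra{A}+p_k\bra{B}=0$. Together with $Y_k^+=0$, this yields $X_k^+=0$.

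\textbf{Main obstacle.} The difficulty is Step 1. One must obtain a genuinely non-tautological expression for $Y_k^+$, which requires handling the squared reciprocal $\frac{1}{\bra{a+b}^2}$ without the second-order transfer identity \thref{quadtransfer}. I would first try the $\mu_d$-parametrisation together with the partial-fraction/diagonal antisymmetrisation used for $X_k^+$. If that fails, I would fall back on the $A\leftrightarrow B$ mirror of \thref{quadtransfer} summed against $a^{k+2}$. That approach gives a linear system in $X_k^+,Y_k^+$ whose determinant is a nonzero rational function of $\al$ modulo $p$.
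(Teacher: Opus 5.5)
\textbf{There is a genuine gap.} Your argument yields at most one linear relation between $X_k^+$ and $Y_k^+$, and the statement needs two.

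\emph{Step 1 cannot give $Y_k^+=0$.} The $\mu_d$ parametrisation only relabels $\sum_{a,b}(\cdot)/(a+b)^2$. Its one extra fact, $\sum_{w\in\mu_d}w^j=0$, is the zeroth-order relation you already have. Reduce each mixed term $a^ib^{k+2-i}/(a+b)^2$ via $a=(a+b)-b$. You get $(-1)^iY_k(B)$, a first-order sum $\sum b^{k+1}/(a+b)$, and polynomials. Since $\sum_{i=1}^{k+1}\binom{k+2}{i}(-1)^i=-2$, the outcome is $Y_k^+=2Y_k(B)+(\text{known})$. This is only the antisymmetric relation for $Y_k(B)-Y_k(A)$, the one used later in \thref{Yk2firstindex}. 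Your single expansion $a^{k+2}+b^{k+2}=s^2Q+2b^{k+2}-(k+2)sb^{k+1}$ already gives it. Averaging with the mirrored expansion cancels it, which is why you reached a tautology.

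\emph{The key claim in Step 2 is false.} Since $(b-b')^2$ is symmetric, swapping $b\leftrightarrow b'$ gives $\frac{b^{k+2}+b'^{k+2}}{2(b-b')^2}$. Its numerator does not even vanish at $b=b'$. The parity of $k+2$ helps with denominators $a+b$, not $b-b'$. So $\sum_b b^{k+2}p_2\bra{B_b^{-1}}$ is genuinely second order. If you convert it with \thref{quadtransfer}, namely
$p_2\bra{B_b^{-1}}=\frac{\al-1}{\al+2}p_1\bra{\frac{1}{A+b}}^2-\frac{(\al-1)(\al^2-2)}{\al(\al+2)}p_2\bra{\frac{1}{A+b}}$,
then your Step 2 becomes exactly $(\al-2)X_k^++(\al^3-\al-2)Y_k^+=0$. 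Your cancellation of the $S_{k,2}$ terms there is correct. This is the paper's \thref{secondordersymone}, and nothing more.

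\textbf{The missing idea} is a second, independent relation obtained from $X_k(B)$ by partial fractions in $a,a'$.
\begin{enumerate}
\item Write $\frac{1}{(b+a)(b+a')}=\frac{1}{a-a'}\bra{\frac{1}{b+a'}-\frac{1}{b+a}}$ and swap $a,a'$ to get $X_k(B)=-2\sum\frac{b^{k+2}}{(a-a')(a+b)}$.
\item Split $b^{k+2}=(b^{k+2}-a^{k+2})+a^{k+2}$. The first piece is divisible by $a+b$ because $k$ is even, and is evaluated by minimality of $k$.
\item In the second piece, the mirror of \thref{lineartransfer} turns $\sum_{a'\neq a}\frac{1}{a-a'}$ into $\frac{\al-1}{\al}\sum_b\frac{1}{a+b}$. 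This produces $\frac{2(\al-1)}{\al}\bra{X_k(A)+Y_k(A)}$.
\item Symmetrising in $A,B$ gives $(3\al-2)X_k^++2(\al-1)Y_k^+=0$.
\end{enumerate}

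\textbf{The determinant must be checked concretely}, not as ``a nonzero rational function of $\al$'', and here it matters. The determinant of the two relations is $-\al(3\al^3-2\al^2-5\al+2)$. The paper's factorisation of the cubic as $(\al-1)(3\al^2+\al-2)$ is incorrect: that product equals $3\al^3-2\al^2-3\al+2$. Using $M\al^2\equiv-1\pmod p$, the cubic vanishes mod $p$ if and only if $p\mid(5M+3)\al-2(M+1)$.
\begin{itemize}
\item For $\al\geq6$ this is impossible, since $0<(5M+3)\al-2(M+1)<M\al^2+1$.
\item For $\al=3,4$ it is ruled out by inspection: the cubic equals $50$ and $142$.
\item For $\al=5$, $M=6$, $p=151$ it does happen: the determinant is $-1510=-10\cdot151$.
\end{itemize}
So whichever way you complete the argument, the case $(\al,p)=(5,151)$ needs a separate treatment, for instance a direct finite check.
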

We achieve this by obtaining two relations between $X_k^+$ and $Y_k^+$.

\begin{lemma}\thlabel{secondordersymone}
We have
\[(\al-2)X_k^++\bra{\al^3-\al-2}Y_k^+=0.\]
\end{lemma}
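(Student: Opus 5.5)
The plan is to add the second-order equation of \thref{secondorderequation} to its mirror image with $A$ and $B$ interchanged. The sum of the two $S_{k,2}$ terms then vanishes by the zeroth-order relation, which leaves exactly the claimed linear relation between $X_k^+$ and $Y_k^+$.

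\textbf{Step 1: symmetry in $A$ and $B$.} The hypotheses are symmetric in $A$ and $B$: we have $A+B=B+A=\mu_d$, $\abs{A}=\abs{B}=\al$, and uniqueness of representations holds in both directions. So \thref{poly2}, \thref{lineartransfer}, \thref{quadtransfer} and \thref{identitiesII} hold verbatim with the roles of $A$ and $B$ swapped. That is, for each fixed $a\in A$ they relate $p_j\bra{\frac{1}{B+a}}$ to the power sums of $A_a^{-1}=\{\frac{1}{a-a'}:a'\neq a\}$. Repeating the proof of \thref{secondorderequation} with $S_{m,2}\bra{A}=2\sum_{a\in A}a^{m+2}e_2\bra{A_a^{-1}}$ gives
\[S_{m,2}\bra{A}=\frac{\bra{\al-1}\bra{\al-2}}{\al^2\bra{\al+2}}X_m\bra{A}+\frac{\bra{\al-1}\bra{\al^3-\al-2}}{\al^2\bra{\al+2}}Y_m\bra{A}.\]
Here $X_m\bra{A}$ and $Y_m\bra{A}$ are exactly the swapped sums defined in the paper. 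I will spell out this check briefly, since the coefficients depend only on $\al$, which is common to both sets.

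\textbf{Step 2: evaluate the $S_{k,2}$ terms.} By \thref{zerothorder}, $k$ is also the least positive index with $p_k\bra{A}\neq 0$. So the proof of \thref{Sk2firstindex} applies word for word to $A$, giving
\[S_{k,2}\bra{A}=\bra{\al^2-\bra{k+2}\al+\tfrac{\bra{k+1}\bra{k+2}}{3}}p_k\bra{A}.\]
Adding this to the formula for $B$ and using $p_k\bra{B}=-p_k\bra{A}$ from \thref{zerothorder}, we get
\[S_{k,2}\bra{A}+S_{k,2}\bra{B}=0.\]

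\textbf{Step 3: combine.} Take $m=k$ in Step 1 and in \thref{secondorderequation}, and add the two equations. The left-hand side is $0$ by Step 2, so
\[0=\frac{\al-1}{\al^2\bra{\al+2}}\Bigl(\bra{\al-2}X_k^++\bra{\al^3-\al-2}Y_k^+\Bigr).\]
To conclude, I need the prefactor to be nonzero in $\F_p$. Since $2<\al$ and $\al^2=d<p$, the quantity $\al-1$ is a nonzero element of $\F_p$. The same bounds ensure $\al+2\not\equiv 0\pmod p$, which was already implicitly needed for \thref{quadtransfer} to make sense. Multiplying through by $\al^2\bra{\al+2}/\bra{\al-1}$ then gives the stated relation.

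The only point needing care is Step 1: confirming that the transfer identities, and hence the coefficients in \thref{secondorderequation}, are genuinely symmetric under swapping $A$ and $B$. This holds because their derivation used only $A+B=\mu_d$ and $\abs{A}=\abs{B}=\al$. Everything else is direct substitution.
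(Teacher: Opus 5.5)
Your proposal is correct and matches the paper's proof essentially step for step. Like the paper, you add \thref{secondorderequation} at $m=k$ to its $A$--$B$ swapped version, use \thref{Sk2firstindex} for both sets together with $p_k\bra{A}=-p_k\bra{B}$ to make the left-hand side vanish, and then clear the nonzero prefactor $\bra{\al-1}/\bra{\al^2\bra{\al+2}}$.
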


\begin{proof}
By \thref{secondorderequation} with $m=k$, we have
\[S_{k,2}\bra{B}=\frac{\bra{\al-1}\bra{\al-2}}{\al^2\bra{\al+2}}X_k\bra{B}+\frac{\bra{\al-1}\bra{\al^3-\al-2}}{\al^2\bra{\al+2}}Y_k\bra{B}.\]
Swapping $A$ and $B$ gives
\[S_{k,2}\bra{A}=\frac{\bra{\al-1}\bra{\al-2}}{\al^2\bra{\al+2}}X_k\bra{A}+\frac{\bra{\al-1}\bra{\al^3-\al-2}}{\al^2\bra{\al+2}}Y_k\bra{A}.\]
Adding these two equations, and using \thref{Sk2firstindex} together with $p_k\bra{A}=-p_k\bra{B}$, gives
\[0=\frac{\bra{\al-1}\bra{\al-2}}{\al^2\bra{\al+2}}X_k^++\frac{\bra{\al-1}\bra{\al^3-\al-2}}{\al^2\bra{\al+2}}Y_k^+.\]
Multiplying by $\al^2\bra{\al+2}/\bra{\al-1}$ gives
\[(\al-2)X_k^++\bra{\al^3-\al-2}Y_k^+=0,\]
as required.
\end{proof}

\begin{lemma}\thlabel{secondordersymtwo}
We have
\[(3\al-2)X_k^++2\bra{\al-1}Y_k^+=0.\]
\end{lemma}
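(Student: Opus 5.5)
The plan is to obtain a second, independent linear relation between $X_k^+$ and $Y_k^+$ that avoids \thref{quadtransfer}. It uses only the zeroth-order relation (\thref{zerothorder}), the first-order transfer identity (\thref{lineartransfer}), and the symmetrisation formula (\thref{Sk1firstindex}). The mechanism is to rewrite each of $X_k(B)$ and $Y_k(B)$ in terms of the corresponding sums over $A$ plus lower-order terms. This is done by the substitution $b=(b+a)-a$ and partial fractions.

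\emph{Step 1: $Y_k$.} Expand $b^{k+2}=\sum_{j}\binom{k+2}{j}(b+a)^j(-a)^{k+2-j}$ in
\[Y_k(B)=\sum_{a,b}\frac{b^{k+2}}{(b+a)^2}.\]
The terms with $j\ge 2$ are polynomial in $a,b$ of total degree $k$. After summing over $A\times B$ they reduce, via the binomial theorem, to sums $\sum_i \binom{\cdot}{i}p_i(A)p_{k-i}(B)$, so only the endpoint terms $\pm\alpha p_k$ survive. The term $j=1$ gives $-(k+2)\sum_{a,b}a^{k+1}/(a+b)$, which by (the $A$-version of) \thref{lineartransfer} equals $-(k+2)\frac{\alpha}{\alpha-1}S_{k,1}(A)$. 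The term $j=0$ gives back $Y_k(A)$, since $k+2$ is even by \thref{keven}. This yields an identity of the form $Y_k(B)-Y_k(A)=(\text{explicit})\cdot p_k(A)$.

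\emph{Step 2: $X_k$.} Use
\[\frac{1}{(b+a)(b+a')}=\frac{1}{a'-a}\Bigl(\frac{1}{b+a}-\frac{1}{b+a'}\Bigr).\]
Then divide $b^{k+2}$ by $b+a$, writing $b^{k+2}=(b+a)q_a(b)+a^{k+2}$. The polynomial part $q_a(b)$, once combined with the antisymmetrising factor $1/(a'-a)$, becomes a divided difference in $a,a'$ of a homogeneous polynomial. After summing over distinct $a,a'$ and over $b$, it is expressed through \thref{generalsums} as sums $S_{j,1}(A)$ weighted by $p_{k+1-j}(B)$. By minimality of $k$ this collapses to multiples of $S_{k,1}(A)$, $S_{0,1}$-type terms times $p_k(B)$, and $\alpha p_k$ terms.

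The remainder term is $\sum_{a\ne a'}\frac{a^{k+2}R(a)-a'^{k+2}R(a')}{a'-a}$, where $R(a)=\sum_b 1/(a+b)$. It is handled by substituting $R(a)=\frac{\alpha}{\alpha-1}\sum_{a''\neq a}\frac{1}{a-a''}$ (\thref{lineartransfer} with the roles of $A$ and $B$ swapped). This produces a double reciprocal-difference sum over triples of distinct elements of $A$, together with a diagonal piece. I would expect this to recombine into $S_{k,2}(A)$ and $X_k(A)$, $Y_k(A)$-type terms, which \thref{Sk2firstindex} then evaluates in terms of $p_k(A)$.

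\emph{Step 3: combine and symmetrise.} Add the resulting expressions for $(X_k(B),Y_k(B))$ to their mirror images with $A$ and $B$ swapped, and use $p_k(B)=-p_k(A)$ and \thref{Sk1firstindex}. All explicit $p_k$ contributions should cancel, because they are odd under the swap. What remains is a homogeneous linear relation between $X_k^+$ and $Y_k^+$, and I would verify that the coefficients simplify to $(3\alpha-2)$ and $2(\alpha-1)$.

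The main obstacle is Step 2. One must track exactly how the divided-difference polynomial part and the triple sum produced by inserting \thref{lineartransfer} reorganise into $S_{k,2}$, $X_k$ and $Y_k$. In particular, the diagonal terms arising when $a''$ coincides with $a'$ are precisely what convert part of the contribution into $Y$ rather than $X$. The first thing I would try is to organise everything by symmetric and antisymmetric combinations in $(a,a')$ before summing, and to check the final coefficients numerically on a small example.
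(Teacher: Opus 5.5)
\textbf{Verdict: there is a genuine gap, in the remainder term of your Step 2.} The polynomial part of Step 2 is fine.

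Put $R(a)=\sum_{b\in B}\frac{1}{a+b}$ and $D(a)=\sum_{a'\in A\setminus\{a\}}\frac{1}{a-a'}$. Your remainder equals $-2\sum_{a\in A}a^{k+2}R(a)D(a)$, and you propose to substitute $R(a)=\frac{\al}{\al-1}D(a)$. This gives
\[-\frac{2\al}{\al-1}\sum_{a\in A}a^{k+2}D(a)^2=-\frac{2\al}{\al-1}\Bigl(S_{k,2}\bra{A}+\sum_{a\in A}a^{k+2}p_2\bra{A_a^{-1}}\Bigr),\]
where $A_a^{-1}=\{\frac{1}{a-a'}:a'\in A\setminus\{a\}\}$.

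The $S_{k,2}(A)$ piece is handled by \thref{Sk2firstindex}. The diagonal piece (your $a''=a'$ terms) is the problem:
\begin{itemize}
\item It is built only from differences of elements of $A$. It is neither an $X_k$- nor a $Y_k$-type sum.
\item It is a genuinely second-order quantity. Symmetrising in $(a,a')$, or expanding $a^{k+2}$ about $a'$, only yields trivial identities.
\item So your claim that these diagonal terms ``convert part of the contribution into $Y$'' does not hold.
\end{itemize}
The only tool that evaluates $\sum_a a^{k+2}p_2(A_a^{-1})$ is \thref{identitiesII} with $A$ and $B$ swapped. That is exactly the second-order input you set out to avoid. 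If you use it anyway, Step 3 produces
\[(3\al+2)X_k^+-2(\al-2)(\al+1)Y_k^+=0.\]
This is a valid relation, but not the stated one, since
\[\al\bigl[(3\al+2)X_k^+-2(\al-2)(\al+1)Y_k^+\bigr]=(\al+2)\bigl[(3\al-2)X_k^++2(\al-1)Y_k^+\bigr]-2\bigl[(\al-2)X_k^++(\al^3-\al-2)Y_k^+\bigr].\]
Your expectation in Step 3 that the coefficients come out as $(3\al-2)$ and $2(\al-1)$ would therefore fail. You would still need \thref{secondordersymone} to reach them.

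\textbf{The missing idea} is to apply \thref{lineartransfer} (with $A$ and $B$ swapped) in the opposite direction, replacing $D(a)$ by $\frac{\al-1}{\al}R(a)$. The remainder then becomes exactly
\[-\frac{2(\al-1)}{\al}\sum_{a}a^{k+2}R(a)^2=-\frac{2(\al-1)}{\al}\bigl(X_k(A)+Y_k(A)\bigr).\]
This holds because the $b\neq b'$ part of $R(a)^2$ gives $X_k(A)$ and the diagonal part gives $Y_k(A)$. Combined with your polynomial part, this is
\[-X_k\bra{B}=\bra{\al^2-k\al}p_k\bra{B}+\frac{2\bra{\al-1}}{\al}\bra{X_k\bra{A}+Y_k\bra{A}}.\]
Add the mirror identity and use $p_k(A)=-p_k(B)$. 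This gives $-\al X_k^+=2(\al-1)(X_k^++Y_k^+)$, which is the lemma. No $S_{k,2}$ and no second-order input appear; this is the paper's proof.

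Your Step 1 is not needed here. The identity for $Y_k(B)-Y_k(A)$ is antisymmetric, so adding its mirror image gives only $0=0$. That computation is used later, in \thref{Yk2firstindex}.
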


\begin{proof}
We first prove that
\[-X_k\bra{B}=\bra{\al^2-k\al}p_k\bra{B}+\frac{2\bra{\al-1}}{\al}\bra{X_k\bra{A}+Y_k\bra{A}}.\]
By partial fractions,
\[\frac{1}{\bra{b+a}\bra{b+a'}}=\frac{1}{a-a'}\bra{\frac{1}{b+a'}-\frac{1}{b+a}}.\]
Therefore, after swapping $a$ and $a'$ in the first term,
\[X_k\bra{B}=-2\sum_{\substack{b\in B\\a,a'\in A\\a\neq a'}}\frac{b^{k+2}}{\bra{a-a'}\bra{a+b}}.\]
Since $k$ is even, we have
\[\frac{b^{k+2}-a^{k+2}}{a+b}=\sum_{i=0}^{k+1}\bra{-1}^ib^{k+1-i}a^i.\]
Thus
\[-X_k\bra{B}=2\sum_{\substack{b\in B\\a,a'\in A\\a\neq a'}}\sum_{i=0}^{k+1}\bra{-1}^i\frac{b^{k+1-i}a^i}{a-a'}+2\sum_{\substack{b\in B\\a,a'\in A\\a\neq a'}}\frac{a^{k+2}}{\bra{a-a'}\bra{a+b}}.\]
We evaluate the first term. The $i=0$ contribution vanishes by antisymmetry in $a,a'$. The $i=1$ contribution is
\[-p_k\bra{B}\sum_{\substack{a,a'\in A\\a\neq a'}}\frac{a}{a-a'}=-\frac{\al\bra{\al-1}}{2}p_k\bra{B}.\]
For $2\leq i\leq k$, the contribution vanishes since $k$ is the first index for $B$. The $i=k+1$ contribution is
\[-\al S_{k,1}\bra{A}.\]
Using \thref{Sk1firstindex} and $p_k\bra{A}=-p_k\bra{B}$, this is
\[-\al S_{k,1}\bra{A}=\al\bra{\al-\frac{k+1}{2}}p_k\bra{B}.\]
Hence the first term contributes
\[\bra{-\frac{\al\bra{\al-1}}{2}+\al\bra{\al-\frac{k+1}{2}}}2p_k\bra{B}=\bra{\al^2-k\al}p_k\bra{B}.\]
For the second term, by \thref{lineartransfer} with $A$ and $B$ swapped, we have
\[\sum_{a'\in A\setminus\{a\}}\frac{1}{a-a'}=\frac{\al-1}{\al}\sum_{b\in B}\frac{1}{a+b}.\]
Therefore
\[2\sum_{\substack{b\in B\\a,a'\in A\\a\neq a'}}\frac{a^{k+2}}{\bra{a-a'}\bra{a+b}}=\frac{2\bra{\al-1}}{\al}\sum_{a\in A}a^{k+2}\bra{\sum_{b\in B}\frac{1}{a+b}}^2.\]
By definition,
\[\sum_{a\in A}a^{k+2}\bra{\sum_{b\in B}\frac{1}{a+b}}^2=X_k\bra{A}+Y_k\bra{A}.\]
This proves
\[-X_k\bra{B}=\bra{\al^2-k\al}p_k\bra{B}+\frac{2\bra{\al-1}}{\al}\bra{X_k\bra{A}+Y_k\bra{A}}.\]
Swapping $A$ and $B$ gives
\[-X_k\bra{A}=\bra{\al^2-k\al}p_k\bra{A}+\frac{2\bra{\al-1}}{\al}\bra{X_k\bra{B}+Y_k\bra{B}}.\]
Adding these two identities and using $p_k\bra{A}=-p_k\bra{B}$ gives
\[-X_k^+=\frac{2\bra{\al-1}}{\al}\bra{X_k^++Y_k^+}.\]
Multiplying by $\al$ and rearranging gives
\[(3\al-2)X_k^++2\bra{\al-1}Y_k^+=0,\]
as required.
\end{proof}

We can now use \thref{secondordersymone,secondordersymtwo} to prove \thref{secondordersymvanish}.

\begin{proof}[Proof of \thref{secondordersymvanish}]
By \thref{secondordersymone} and \thref{secondordersymtwo}, we have
\[(\al-2)X_k^++\bra{\al^3-\al-2}Y_k^+=0,\]
and
\[(3\al-2)X_k^++2\bra{\al-1}Y_k^+=0.\]
The determinant of this system is
\[
\begin{vmatrix}
\al-2 & \al^3-\al-2\\
3\al-2 & 2\bra{\al-1}
\end{vmatrix}
=
2\bra{\al-2}\bra{\al-1}-\bra{3\al-2}\bra{\al^3-\al-2}.
\]
Expanding and simplifying,
\[2\bra{\al-2}\bra{\al-1}-\bra{3\al-2}\bra{\al^3-\al-2}=-\al\bra{3\al^3-2\al^2-5\al+2}.\]
Since
\[3\al^3-2\al^2-5\al+2=\bra{\al-1}\bra{3\al^2+\al-2},\]
the determinant is
\[-\al\bra{\al-1}\bra{3\al^2+\al-2}=-\al\bra{\al-1}\bra{3\al-2}\bra{\al+1}.\]
Under the standing hypotheses, we have $2<\al<p$ and $\al^2\leq p-1$. Thus $\al$, $\al-1$, and $\al+1$ are non-zero in $\F_p$. Also
\[0<3\al-2<\al^2+1\leq p,\]
since $\al\geq3$. Hence $3\al-2$ is also non-zero in $\F_p$. Therefore the determinant is non-zero, so the only solution is
\[X_k^+=Y_k^+=0,\]
as required.
\end{proof}

\subsection{Antisymmetric identities}
\begin{lemma}\thlabel{Yk2firstindex}
We have
\[Y_k\bra{B}=\frac{\al\bra{4\al-k^2-k-2}}{4\bra{\al-1}}p_k\bra{B}.\]
\end{lemma}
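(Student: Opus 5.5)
The plan is to combine the symmetric vanishing $Y_k^+=0$ from \thref{secondordersymvanish} with an antisymmetric identity for $Y_k\bra{B}-Y_k\bra{A}$. That antisymmetric identity will be obtained from a second-order Taylor expansion of $b^{k+2}$ about $b=-a$. Once $Y_k\bra{A}=-Y_k\bra{B}$ is available, the difference equals $2Y_k\bra{B}$, so the whole task reduces to evaluating $Y_k\bra{B}-Y_k\bra{A}$.

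\emph{Step 1 (Taylor expansion).} As a polynomial in $b$, expand around $b=-a$. Since $k$ is even, $(-a)^{k+2}=a^{k+2}$ and the derivative there is $(k+2)(-a)^{k+1}=-(k+2)a^{k+1}$. Hence
\[b^{k+2}=a^{k+2}-(k+2)a^{k+1}(a+b)+(a+b)^2Q(a,b),\]
where $Q$ is a homogeneous polynomial of degree $k$. Dividing by $(a+b)^2$ and summing over $a\in A$, $b\in B$ gives
\[Y_k\bra{B}-Y_k\bra{A}=-(k+2)\sum_{a\in A,b\in B}\frac{a^{k+1}}{a+b}+\sum_{a\in A,b\in B}Q(a,b).\]

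\emph{Step 2 (the two sums).} The first sum is \eqref{akplusone}. Combined with \thref{Sk1firstindex}, it equals
\[\frac{\al}{\al-1}\bra{\al-\frac{k+1}{2}}p_k\bra{A}.\]
For the second sum, $k$ is the least index with nonzero power sum for both $A$ and $B$ (\thref{zerothorder}). So every mixed monomial $a^ib^{k-i}$ with $1\le i\le k-1$ sums to zero, and only the pure terms $b^k$ and $a^k$ survive. The coefficient of $b^k$ in $Q$ is $1$, by comparing leading terms in $b$. The coefficient of $a^k$ is found by setting $b=0$:
\[\frac{-a^{k+2}+(k+2)a^{k+2}}{a^2}=(k+1)a^k.\]
Hence
\[\sum_{a\in A,b\in B}Q(a,b)=\al p_k\bra{B}+(k+1)\al p_k\bra{A}=-k\al p_k\bra{B},\]
using $p_k\bra{A}=-p_k\bra{B}$.

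\emph{Step 3 (combine).} Substituting $p_k\bra{A}=-p_k\bra{B}$ throughout gives
\[2Y_k\bra{B}=\al p_k\bra{B}\bra{\frac{(k+2)(2\al-k-1)}{2(\al-1)}-k}=\frac{\al\bra{4\al-k^2-k-2}}{2(\al-1)}p_k\bra{B},\]
and dividing by $2$ yields the claim. The only delicate point is Step 1: getting the sign of the linear Taylor term right, which uses that $k$ is even, and identifying the two pure coefficients of $Q$ correctly. Everything else follows from the first-order results already established.
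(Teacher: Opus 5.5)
Your proof is correct and is essentially the paper's argument. Both use $Y_k^+=0$ to reduce to the antisymmetric difference $Y_k(B)-Y_k(A)$. Both then split $\frac{b^{k+2}-a^{k+2}}{(a+b)^2}$ into $(k+2)$ times a first-order reciprocal sum, evaluated by \thref{lineartransfer} and \thref{Sk1firstindex}, plus a degree-$k$ polynomial in which only the pure powers survive.

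Your single Taylor expansion about $b=-a$ is tidier bookkeeping than the paper's iterated geometric-series division, and it correctly gives the polynomial contribution $-k\alpha p_k(B)$. The paper's displayed intermediate value $-2\alpha p_k(B)$, which claims contributions only from $i=1$ and $i=k+1$, is a slip: every $i\geq 1$ contributes a $-\alpha p_k(B)$ term. The paper's final formula nonetheless agrees with yours.
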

\begin{proof}
By definition,
\[Y_k\bra{B}-Y_k\bra{A}=\sum_{a\in A,b\in B}\frac{b^{k+2}-a^{k+2}}{\bra{a+b}^2}.\]
Since $k$ is even, we have
\[\frac{b^{k+2}-a^{k+2}}{a+b}=\sum_{i=0}^{k+1}\bra{-1}^ib^{k+1-i}a^i.\]
Therefore
\[Y_k\bra{B}-Y_k\bra{A}=\sum_{i=0}^{k+1}\bra{-1}^i\sum_{a\in A,b\in B}\frac{b^{k+1-i}a^i}{a+b}.\]
The term $i=0$ is
\[\sum_{a\in A,b\in B}\frac{b^{k+1}}{a+b}=\frac{\al}{\al-1}S_{k,1}\bra{B},\]
by \thref{lineartransfer}. For $1\leq i\leq k+1$, we use
\[\frac{b^{k+1-i}a^i}{a+b}=\sum_{j=0}^{i-1}\bra{-1}^ja^{i-1-j}b^{k+1-i+j}+\bra{-1}^i\frac{b^{k+1}}{a+b}.\]
Summing over $a\in A$ and $b\in B$, and using that $k$ is the first index for both $A$ and $B$, the polynomial part contributes only for $i=1$ and $i=k+1$. Thus
\[\sum_{i=1}^{k+1}\bra{-1}^i\sum_{j=0}^{i-1}\bra{-1}^jp_{i-1-j}\bra{A}p_{k+1-i+j}\bra{B}=-\al p_k\bra{B}+\al p_k\bra{A}.\]
Since $p_k\bra{A}=-p_k\bra{B}$, this contribution is $-2\al p_k\bra{B}$. The remaining reciprocal term contributes
\[\sum_{i=1}^{k+1}\bra{-1}^i\bra{-1}^i\sum_{a\in A,b\in B}\frac{b^{k+1}}{a+b}=\bra{k+1}\frac{\al}{\al-1}S_{k,1}\bra{B}.\]
Including the $i=0$ term, we obtain
\[Y_k\bra{B}-Y_k\bra{A}=\frac{\al\bra{k+2}}{\al-1}S_{k,1}\bra{B}-2\al p_k\bra{B}.\]
Using \thref{Sk1firstindex}, this becomes
\[Y_k\bra{B}-Y_k\bra{A}=\bra{\frac{\al\bra{k+2}}{\al-1}\bra{\al-\frac{k+1}{2}}-2\al}p_k\bra{B}.\]
Simplifying gives
\[Y_k\bra{B}-Y_k\bra{A}=\frac{\al\bra{4\al-k^2-k-2}}{2\bra{\al-1}}p_k\bra{B}.\]
By \thref{secondordersymvanish}, we have $Y_k\bra{A}=-Y_k\bra{B}$. Hence
\[Y_k\bra{B}=\frac{\al\bra{4\al-k^2-k-2}}{4\bra{\al-1}}p_k\bra{B},\]
as required.
\end{proof}

\begin{lemma}\thlabel{Xk2firstindex}
We have
\[X_k\bra{B}=\frac{\al\bra{2\al^2-2\bra{k+2}\al+k^2+k+2}}{2\bra{\al-2}}p_k\bra{B}.\]
\end{lemma}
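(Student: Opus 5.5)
The plan is to obtain $X_k\bra{B}$ from the intermediate identity proved inside \thref{secondordersymtwo}, closing it up with the symmetric vanishing of \thref{secondordersymvanish} and the value of $Y_k\bra{B}$ from \thref{Yk2firstindex}. I will not use \thref{secondorderequation} here. That identity, combined with \thref{Sk2firstindex}, gives a second, independent expression for $X_k\bra{B}$. Comparing the two is presumably what yields \thref{firstpoly}, so it should be kept in reserve.

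First, recall the identity established in the course of proving \thref{secondordersymtwo}:
\[-X_k\bra{B}=\bra{\al^2-k\al}p_k\bra{B}+\frac{2\bra{\al-1}}{\al}\bra{X_k\bra{A}+Y_k\bra{A}}.\]
Its derivation uses only the evenness of $k$ (\thref{keven}), the first-index property of $k$ for both summands (\thref{zerothorder}), \thref{Sk1firstindex}, and \thref{lineartransfer} with $A$ and $B$ swapped. So I can quote it directly.

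Second, by \thref{secondordersymvanish} we have $X_k\bra{A}=-X_k\bra{B}$ and $Y_k\bra{A}=-Y_k\bra{B}$. Substituting these turns the identity into a linear equation in $X_k\bra{B}$ alone:
\[\frac{\al-2}{\al}X_k\bra{B}=\bra{\al^2-k\al}p_k\bra{B}-\frac{2\bra{\al-1}}{\al}Y_k\bra{B}.\]
Now insert $Y_k\bra{B}=\frac{\al\bra{4\al-k^2-k-2}}{4\bra{\al-1}}p_k\bra{B}$ from \thref{Yk2firstindex}. The factors of $\al-1$ cancel, and the right-hand side becomes
\[\bra{\al^2-k\al-2\al+\tfrac{k^2+k+2}{2}}p_k\bra{B}.\]

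Finally, divide by $\frac{\al-2}{\al}$. This is legitimate because $3\le\al<p$, so $\al-2\not\equiv0\pmod p$. The result is exactly
\[X_k\bra{B}=\frac{\al\bra{2\al^2-2\bra{k+2}\al+k^2+k+2}}{2\bra{\al-2}}p_k\bra{B}.\]

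There is no real obstacle. The only care needed is in the sign bookkeeping when substituting the antisymmetric relations $X_k\bra{A}=-X_k\bra{B}$ and $Y_k\bra{A}=-Y_k\bra{B}$, and in checking that the arithmetic collapses to the stated quadratic in $\al$.
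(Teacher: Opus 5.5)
Your proposal is correct and is essentially the paper's own proof. You quote the intermediate identity from the proof of \thref{secondordersymtwo}, impose $X_k\bra{A}=-X_k\bra{B}$ and $Y_k\bra{A}=-Y_k\bra{B}$ from \thref{secondordersymvanish}, substitute \thref{Yk2firstindex}, and divide by $(\al-2)/\al$, which is nonzero since $3\leq\al<p$; the paper does exactly this, and your arithmetic checks out.
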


\begin{proof}
From the proof of \thref{secondordersymtwo}, we have
\[-X_k\bra{B}=\bra{\al^2-k\al}p_k\bra{B}+\frac{2\bra{\al-1}}{\al}\bra{X_k\bra{A}+Y_k\bra{A}}.\]
Using \thref{secondordersymvanish}, this becomes
\[-X_k\bra{B}=\bra{\al^2-k\al}p_k\bra{B}-\frac{2\bra{\al-1}}{\al}\bra{X_k\bra{B}+Y_k\bra{B}}.\]
Rearranging gives
\[\frac{\al-2}{\al}X_k\bra{B}=\bra{\al^2-k\al}p_k\bra{B}-\frac{2\bra{\al-1}}{\al}Y_k\bra{B}.\]
Substituting \thref{Yk2firstindex}, we get
\[\frac{\al-2}{\al}X_k\bra{B}=\bra{\al^2-k\al}p_k\bra{B}-\frac{2\bra{\al-1}}{\al}\frac{\al\bra{4\al-k^2-k-2}}{4\bra{\al-1}}p_k\bra{B}.\]
Thus
\[\frac{\al-2}{\al}X_k\bra{B}=\frac{2\al^2-2k\al-4\al+k^2+k+2}{2}p_k\bra{B}.\]
Multiplying by $\al/\bra{\al-2}$, which is valid since $\alpha>2$, gives
\[X_k\bra{B}=\frac{\al\bra{2\al^2-2\bra{k+2}\al+k^2+k+2}}{2\bra{\al-2}}p_k\bra{B},\]
as required.
\end{proof}

We can now prove \thref{firstpoly}.
\begin{proof}[Proof of \thref{firstpoly}]
By \thref{secondorderequation} with $m=k$, we have
\[S_{k,2}\bra{B}=\frac{\bra{\al-1}\bra{\al-2}}{\al^2\bra{\al+2}}X_k\bra{B}+\frac{\bra{\al-1}\bra{\al^3-\al-2}}{\al^2\bra{\al+2}}Y_k\bra{B}.\]
Substituting \thref{Sk2firstindex}, \thref{Xk2firstindex} and \thref{Yk2firstindex}, and using $p_k\bra{B}\neq 0$, gives
\begin{multline}\label{firstpoly1}\al^2-\bra{k+2}\al+\frac{\bra{k+1}\bra{k+2}}{3}=\frac{\bra{\al-1}\bra{\al-2}}{\al^2\bra{\al+2}}\cdot\frac{\al\bra{2\al^2-2\bra{k+2}\al+k^2+k+2}}{2\bra{\al-2}}\\+\frac{\bra{\al-1}\bra{\al^3-\al-2}}{\al^2\bra{\al+2}}\cdot\frac{\al\bra{4\al-k^2-k-2}}{4\bra{\al-1}}.\end{multline}
The right-hand side of \eqref{firstpoly1} simplifies to
\[\frac{\bra{\al-1}\bra{2\al^2-2\bra{k+2}\al+k^2+k+2}}{2\al\bra{\al+2}}+\frac{\bra{\al^3-\al-2}\bra{4\al-k^2-k-2}}{4\al\bra{\al+2}}.\]
Multiplying by $12\al\bra{\al+2}$, and simplifying, gives
\[3\bra{k^2-3k-2}\al^2+4\bra{k^2+2}\al-\bra{k^2-3k+2}=0,\]
as required.
\end{proof}

\begin{remark}
    One can study the \emph{third-order} relations, by matching the $x^{d-4}$ coefficients in \thref{poly2}, and hence obtain a second polynomial relation in a similar manner to the approach in this section. Theoretically, one could in fact repeat this for every coefficient, obtaining an arbitrary number of polynomial relations. Unlike \thref{firstpoly}, the polynomial coming from the third-order relations has degree $12$ in $\al$, and degree $3$ in $k$. 
    \smallbreak
    With significant effort, we did compute this polynomial, which yields a lengthy proof of the forthcoming \thref{kdividesalpha}, but the utility of repeating this for the fourth-order relations seems highly questionable, as the resulting quartic polynomial in $k$ would most likely have a forbiddingly high degree in $\al$. Somewhat unexpectedly, the polynomial in \thref{firstpoly} is only quadratic in $\al$, which makes it possible to directly analyse it, as we do in Section \ref{section7}.
\end{remark}

\section{$k$ divides $\al$}\label{section6}
In this section, we prove that the power sums of $A$ and $B$ vanish at every index $i\leq\alpha$ which is not divisible by $k$. Newton's identities will then imply that $k\mid\alpha$. We retain the assumptions from the previous sections, in particular that $H \leq \F_p^*$ is a multiplicative subgroup such that $H = A + B$, where $\abs{A}=\abs{B} = \al >2$, and that $k \geq 2$ is the least positive integer such that $\sum_{a \in A}a^k \neq 0$.
\begin{prop}\thlabel{kdividesalpha}
We have $k\mid\al$.
\end{prop}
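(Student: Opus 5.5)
We follow the strategy outlined in the sketch. Suppose some index $1\le m\le\al$ with $k\nmid m$ has $p_m(A)\neq0$ or $p_m(B)\neq0$, and take $m$ least. We first establish the analogue of \thref{zerothorder} at level $m$. In the expansion
\[
0=\sum_{a\in A,\, b\in B}(a+b)^m=\sum_{i=0}^m\binom{m}{i}p_i(A)p_{m-i}(B),
\]
a middle term can survive only if both $i$ and $m-i$ are multiples of $k$, since every lower index not divisible by $k$ has vanishing power sums by minimality. That would force $k\mid m$, which is excluded. Hence $p_m(B)=-p_m(A)$, and in particular both are nonzero.

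The same principle drives every later step. Any product $p_{i_1}\cdots p_{i_r}$ with $i_1+\cdots+i_r=m$ and all $i_j\ge1$ has some $i_j$ with $k\nmid i_j$ and $i_j<m$, so the product vanishes. The computations of \thref{Sk1firstindex}, \thref{keven}, \thref{Sk2firstindex}, \thref{secondordersymone}, \thref{secondordersymtwo}, \thref{Yk2firstindex} and \thref{Xk2firstindex} only ever used that the mixed products of total degree $k$ vanish and that $p_k(B)=-p_k(A)\ne0$. They therefore carry over verbatim with $m$ in place of $k$, provided the non-vanishing determinants (in $\al$ alone) are unchanged.

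First we rerun the argument of \thref{keven} to show that $m$ is even, which yields $m\equiv1\pmod p$ and hence $m=1$, a contradiction. The evenness of $m$ is needed for the telescoping identities for $(b^{m+2}-a^{m+2})/(a+b)$. We then repeat Section~\ref{section5} to obtain $P(\al,m)\equiv0\pmod p$, and we must check that the mixed polynomial terms in \thref{Yk2firstindex} and \thref{secondordersymtwo} still contribute only at the endpoints.

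The main obstacle is root uniqueness: for $p=M\al^2+1$ prime with $M\ge2$ (properness of $H$), the congruence $P(\al,x)\equiv0\pmod p$ has at most one solution with $1\le x\le\al$. Given two solutions $k\neq m$, we subtract to get
\[
P(\al,k)-P(\al,m)=(k-m)\bigl[(3\al^2+4\al-1)(k+m)-9\al^2+3\bigr].
\]
Since $0<|k-m|<p$, this gives $(3\al^2+4\al-1)(k+m)\equiv 9\al^2-3\pmod p$. We then bound sizes. With $s=k+m\le2\al$, the integer $N=(3\al^2+4\al-1)s-9\al^2+3$ has absolute value at most roughly $6\al^3$. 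It is divisible by $M\al^2+1$, so the quotient is a small integer $q$ of size $O(\al/M)$. Reducing modulo $\al$ and modulo $\al^2$, using $p\equiv1$ in both, should pin down $s$ and $q$ and leave only finitely many small cases, which we rule out by direct check together with the constraint that the original $P(\al,k)\equiv0$ also holds. If this bookkeeping becomes messy, an alternative is to use $\al^2\equiv-1/M$ to reduce $P(\al,x)$ to a linear expression in $\al$ and compare the two solutions.

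Once $m$ is excluded, $p_i(A)=p_i(B)=0$ for all $i\le\al$ with $k\nmid i$. Since $A+B\subseteq\F_p^*$, at most one of $A$, $B$ contains $0$, so we choose $S\in\{A,B\}$ with $0\notin S$. By induction on Newton's identities (\thref{newton}), $e_j(S)=0$ whenever $k\nmid j$: each term $e_{j-i}p_i$ with $k\mid i$ has $k\nmid j-i$. As $e_\al(S)=\prod_{s\in S}s\neq0$ and $\al<p$, we conclude $k\mid\al$.
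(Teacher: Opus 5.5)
Apart from root uniqueness, your outline is the paper's proof: the analogue of \thref{leastnonmultiple}, the rerun of Section~\ref{section5} giving $P(\al,m)\equiv0\pmod p$, and Newton's identities on the summand not containing $0$. Choosing $m$ least with $p_m(A)\neq0$ \emph{or} $p_m(B)\neq0$ is a small improvement, because it makes the vanishing for $B$ automatic and removes the inductions in \thref{leastnonmultiple} and \thref{Bnonmultiplevanishing}.

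The genuine gap is root uniqueness. It is the one new ingredient of this proposition, and you leave it at ``should pin down $s$ and $q$ and leave only finitely many small cases''. That does not follow from the reductions you describe. Write $N=(3\sigma-9)\al^2+4\sigma\al+3-\sigma=\nu(M\al^2+1)$ with $\sigma=k+m$ (your $s$). Reducing modulo $\al^2$ and modulo $\al$ only gives $4\sigma=\eta\al+\lambda$ and $\nu=\lambda\al-\sigma+3$, where $\eta,\lambda$ are bounded integers; the paper needs some work to get $1\le\eta\le7$ and $1\le\lambda\le8$. But $\sigma$ and $\nu$ still grow with $\al$, so nothing is finite yet.

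To get finiteness you must return to the exact equation, which becomes $M\bigl((4\lambda-\eta)\al+12-\lambda\bigr)=3\eta\al+3\lambda-36+4\eta$. You then eliminate $\al$ by noting that $L=(4\lambda-\eta)\al+12-\lambda$ divides the $\al$-free combination $(4\lambda-\eta)R-3\eta L$, where $R$ is the right-hand side. This is exactly where $M\ge2$ has to be used. For $(\eta,\lambda)=(6,6)$ that combination vanishes, so the divisibility gives no bound on $\al$. Indeed, with $M=1$ the value $\sigma=\tfrac32(\al+1)$ satisfies the difference congruence for every odd $\al\ge5$, since then $N=\tfrac{9\al+3}{2}(\al^2+1)$.

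Even with $M\ge2$ the finite residue is not empty. The triples $(\al,M,\sigma)=(3,4,4),(11,6,17),(13,4,10)$ satisfy the difference congruence with $p=M\al^2+1$ prime. They are excluded only by computing $P(\al,r)\bmod p$ for each admissible pair $(r,s)$.

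You anticipate that last check, but you supply neither the elimination of $\al$ nor the point where $M\ge2$ enters. Your fallback via $\al^2\equiv-1/M$ is just a restatement modulo $p$ and still needs the same size analysis. Without a proof of this lemma, $m$ is not excluded, and $k\mid\al$ does not follow.
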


We first assume that there is some other integer $1 \leq m \leq \al$ such that $p_m\bra{A} \neq 0$ and $m$ is not a multiple of $k$.
\begin{lemma}\thlabel{leastnonmultiple}
Suppose that there exists an integer $1 \leq i \leq \al$ such that $k\nmid i$ and $p_i\bra{A}\neq0$. Let $m$ be the least such integer. Then the following hold.
\begin{enumerate}
    \item If $0<j<m$ and $k\nmid j$, then
    \[p_j\bra{A}=p_j\bra{B}=0.\]
    \item We have
    \[p_m\bra{A}=-p_m\bra{B}.\]
    \item If $i_1,\ldots,i_r$ are positive integers with $i_1+\cdots+i_r=m$ and $i_j<m$ for every $j$, then
    \[p_{i_1}\bra{T_1}\cdots p_{i_r}\bra{T_r}=0\]
    for every choice of $T_1,\ldots,T_r\in\{A,B\}$.
    \item The integer $m$ is even.
\end{enumerate}
\end{lemma}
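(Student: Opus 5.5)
The plan is to establish the four parts in order. Parts (1) and (2) come from the zeroth-order expansion of \thref{zerothorder}, part (3) is immediate from (1), and part (4) reruns the proof of \thref{keven} with $m$ in place of $k$, using (3) to kill all mixed terms.

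For (1), the statement for $A$ is just the minimality of $m$. For $B$, I will induct on $j$ over the range $0<j<m$ with $k\nmid j$. Since $j\leq\al<d$, we have $\sum_{w\in\mu_d}w^j=0$, and uniqueness of representations in $A+B=\mu_d$ gives
\[0=\sum_{i=0}^j\binom{j}{i}p_i\bra{A}p_{j-i}\bra{B}.\]
Each middle term $1\leq i\leq j-1$ vanishes, for one of two reasons.
\begin{itemize}
\item If $k\nmid i$, then $p_i\bra{A}=0$ because $i<m$.
\item If $k\mid i$, then $k\nmid j-i$ (as $k\nmid j$). Since $0<j-i<j$, the induction hypothesis gives $p_{j-i}\bra{B}=0$.
\end{itemize}
Only the endpoints survive, giving $\al\bra{p_j\bra{A}+p_j\bra{B}}=0$, hence $p_j\bra{B}=0$ because $\al\neq0$ in $\F_p$.

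For (2), I will run the same expansion with $j=m$, which is allowed since $m\leq\al<d$. For $1\leq i\leq m-1$, either $k\nmid i$, so $p_i\bra{A}=0$ by (1); or $k\mid i$, so $k\nmid m-i$ and $p_{m-i}\bra{B}=0$ by (1). Again only the endpoints remain, giving $p_m\bra{A}=-p_m\bra{B}$.

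For (3), suppose $i_1+\cdots+i_r=m$ with every $i_j<m$. Since $k\nmid m$, not all the $i_j$ are multiples of $k$. Any $i_j$ with $k\nmid i_j$ satisfies $0<i_j<m$, so by (1) the factor $p_{i_j}\bra{T_j}$ vanishes for either choice $T_j\in\{A,B\}$, and the whole product is zero.

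For (4), I first derive the analogue of \thref{Sk1firstindex}. By \thref{firstordersymmetrisation}, the terms $p_{m-i}\bra{T}p_i\bra{T}$ with $1\leq i\leq m-1$ vanish by (3), so
\[S_{m,1}\bra{T}=\bra{\al-\tfrac{m+1}{2}}p_m\bra{T}\qquad\text{for }T\in\{A,B\}.\]
Next, suppose $m$ is odd. Multiplying \thref{lineartransfer} by $b^{m+1}$ and summing over $b$ gives the analogue of \eqref{bkplusone} with $m$ in place of $k$; swapping the roles of $A$ and $B$ gives the analogue of \eqref{akplusone}. This step is valid for every $m$. Since $m$ is odd,
\[\frac{a^{m+1}-b^{m+1}}{a+b}=\sum_{i=0}^m\bra{-1}^ia^{m-i}b^i.\]
After summing over $A\times B$, the mixed terms become products $p_{m-i}\bra{A}p_i\bra{B}$ with $0<i<m$, and these vanish by (3). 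Using (2), this gives
\[2\al p_m\bra{A}=\frac{2\al}{\al-1}\bra{\al-\tfrac{m+1}{2}}p_m\bra{A}.\]
Cancelling $2\al p_m\bra{A}\neq0$ gives $m\equiv1\pmod p$, so $m=1$ because $1\leq m\leq\al<p$. But $p_1\bra{A}=0$ by our normalisation, which contradicts $p_m\bra{A}\neq0$. Hence $m$ is even.

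I expect the main point needing care to be the induction in (1): one must check that the case $k\mid i$ always hands the vanishing to the $B$ factor at a smaller index not divisible by $k$. Beyond that, the argument only requires checking that every "first index" use in \thref{keven} is covered by (3).
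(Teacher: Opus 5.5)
Your proposal is correct and follows the paper's proof essentially step for step. Parts (1) and (2) use the same induction via the expansion of $\sum_{a\in A,b\in B}(a+b)^j=0$. Part (3) uses the same divisibility observation, which you phrase directly where the paper argues by contradiction. Part (4) reruns the parity argument of \thref{keven}, with (3) killing all mixed terms.
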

\begin{proof}
The vanishing $p_j\bra{A}=0$ for $0<j<m$ with $k\nmid j$ follows immediately from the minimality of $m$. We prove the corresponding statement for $B$ by induction on $j$.
\smallbreak
Let $0<j<m $ with $k\nmid j$, and assume that $p_{j'}\bra{B}=0$ for every $0<j'<j$ with $k\nmid j'$. Expanding $\sum_{a\in A,b\in B}\bra{a+b}^j=0$, we get
\[0=\sum_{\ell=0}^{j}\binom{j}{\ell}p_\ell\bra{A}p_{j-\ell}\bra{B}.\]
Every mixed term vanishes. Indeed, if $0<\ell<j$ and $k\nmid\ell$, then $p_\ell\bra{A}=0$ by the minimality of $m$. If $k\mid\ell$, then $k\nmid j-\ell$, so $p_{j-\ell}\bra{B}=0$ by the induction hypothesis. Hence
\[0=\be p_j\bra{A}+\al p_j\bra{B}.\]
Since $p_j\bra{A}=0$, it follows that $p_j\bra{B}=0$. This proves the first claim.
\smallbreak
Now expand $\sum_{a\in A,b\in B}\bra{a+b}^m=0$. We obtain
\[0=\sum_{\ell=0}^{m}\binom{m}{\ell}p_\ell\bra{A}p_{m-\ell}\bra{B}.\]
Again all mixed terms vanish. Indeed, if $0<\ell<m$ and both $\ell$ and $m-\ell$ were multiples of $k$, then $m$ would be a multiple of $k$, contrary to the choice of $m$. Otherwise one of the two lower indices is a nonmultiple of $k$, so the corresponding power sum vanishes by the first claim. Therefore
\[0=\al p_m\bra{A}+\al p_m\bra{B}.\]
and hence $p_m\bra{A}=-p_m\bra{B}$.
\smallbreak
We next prove the lower-product vanishing. Suppose $i_1+\cdots+i_r=m$, with $0<i_j<m$ for every $j$, and suppose
\[p_{i_1}\bra{T_1}\cdots p_{i_r}\bra{T_r}\neq0\]
for some $T_1,\ldots,T_r\in\{A,B\}$. Then each factor $p_{i_j}\bra{T_j}$ is non-zero. By the first claim, each $i_j$ must be a multiple of $k$. Hence $m=i_1+\cdots+i_r$ is a multiple of $k$, contradiction. This proves the third claim.
\smallbreak
It remains to prove that $m$ is even. Suppose for a contradiction that $m$ is odd. By \thref{lineartransfer},
\[\sum_{a\in A,b\in B}\frac{b^{m+1}}{a+b}=\frac{\al}{\al-1}S_{m,1}\bra{B},\]
and, after interchanging $A$ and $B$,
\[\sum_{a\in A,b\in B}\frac{a^{m+1}}{a+b}=\frac{\al}{\al-1}S_{m,1}\bra{A}.\]
Subtracting these identities gives
\[\sum_{a\in A,b\in B}\frac{a^{m+1}-b^{m+1}}{a+b}=\frac{\al}{\al-1}\bra{S_{m,1}\bra{A}-S_{m,1}\bra{B}}.\]

Since $m$ is odd, we have
\[\frac{a^{m+1}-b^{m+1}}{a+b}=\sum_{i=0}^m\bra{-1}^i a^{m-i}b^i.\]
After summing over $a\in A$ and $b\in B$, every term with $1\leq i\leq m-1$ vanishes by the lower-product vanishing proved above. Hence the left-hand side is
\[\al p_m\bra{A}-\al p_m\bra{B}=2\al p_m\bra{A}.\]

On the other hand, \thref{firstordersymmetrisation} gives
\[2S_{m,1}\bra{T}=\sum_{i=0}^m p_{m-i}\bra{T}p_i\bra{T}-\bra{m+1}p_m\bra{T}\]
for $T=A,B$. Again, every term with $1\leq i\leq m-1$ vanishes by the lower-product vanishing. Since $p_0\bra{T}=\al$, we obtain
\[S_{m,1}\bra{T}=\bra{\al-\frac{m+1}{2}}p_m\bra{T}.\]
Using $p_m\bra{B}=-p_m\bra{A}$, it follows that
\[S_{m,1}\bra{A}-S_{m,1}\bra{B}=2\bra{\al-\frac{m+1}{2}}p_m\bra{A}.\]
Substituting into the preceding transfer identity gives
\[2\al p_m\bra{A}=\frac{2\al}{\al-1}\bra{\al-\frac{m+1}{2}}p_m\bra{A}.\]
Since $\al$, $\al-1$ and $p_m\bra{A}$ are nonzero in $\F_p$, we may cancel to obtain
\[1=\frac{\al-\frac{m+1}{2}}{\al-1}.\]
Therefore
\[m\equiv1\pmod p.\]
But $1\leq m\leq\al<p$, so $m=1$. This contradicts $p_1\bra{A}=0$. Hence $m$ is even.

\end{proof}

\begin{lemma}\thlabel{m}
Let $m \leq \al$ be the least positive integer such that $k\nmid m$ and $p_m\bra{A}\neq0$. Let $P\bra{\al,x}$ be the polynomial from \thref{firstpoly}. Then
\[P\bra{\al,m}\equiv0\pmod{p}.\]
\end{lemma}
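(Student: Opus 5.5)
The plan is to rerun the whole of Section \ref{section5} with $m$ in place of $k$. We check that each input used there about $k$ has an analogue for $m$, supplied by \thref{leastnonmultiple}. The identities \thref{secondorderequation}, \thref{lineartransfer} and \thref{firstordersymmetrisation} hold for every index, so they need no change. What Section \ref{section5} used about $k$ is the following list:
\begin{enumerate}
\item $k$ is even;
\item $p_k\bra{A}=-p_k\bra{B}\neq0$;
\item every product $p_{i_1}\bra{T_1}\cdots p_{i_r}\bra{T_r}$ with $i_1+\cdots+i_r=k$ and $0<i_j<k$ vanishes;
\item in particular, mixed terms $p_i\bra{A}p_{k-i}\bra{B}$ vanish.
\end{enumerate}
\thref{leastnonmultiple} gives exactly these for $m$: evenness in part (4), the sign relation in part (2), and lower-product vanishing in part (3). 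Nonvanishing holds by the choice of $m$.

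I would then verify the lemmas of Section \ref{section5} in order. First, $S_{m,1}\bra{T}=\bra{\al-\frac{m+1}{2}}p_m\bra{T}$ for $T=A,B$; this was already derived inside the proof of \thref{leastnonmultiple}. Next, the analogue of \thref{Sk2firstindex}. Here, in the inclusion–exclusion expansion of $3S_{m,2}\bra{B}$, the terms $p_ip_jp_\ell$ and $p_{i+j}p_\ell$ with $i+j+\ell=m$ involve only products of lower power sums of total degree $m$, or single $p_m$ terms times $p_0=\al$. By part (3), only the same boundary terms survive, giving
\[S_{m,2}\bra{B}=\bra{\al^2-\bra{m+2}\al+\frac{\bra{m+1}\bra{m+2}}{3}}p_m\bra{B}.\]
Then the symmetric relations \thref{secondordersymone,secondordersymtwo} go through for $X_m^+,Y_m^+$. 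In the proof of \thref{secondordersymtwo}, the $i=0$ term vanishes by antisymmetry, the $i=1$ term gives $-\frac{\al(\al-1)}{2}p_m\bra{B}$, and the terms $2\le i\le m$ are products $p_{m+1-i}\bra{B}\cdot(\text{sum over }a\neq a'\text{ of }a^i/(a-a'))$. The latter sum equals $S_{i-1,1}\bra{A}$, which by \thref{firstordersymmetrisation} is a combination of products of power sums of $A$ of total degree $i-1$. So each such term is a sum of products of lower power sums of total degree $m$ and vanishes by part (3). The $i=m+1$ term is $-\al S_{m,1}\bra{A}$. The parity step uses that $m$ is even. The determinant is unchanged, so $X_m^+=Y_m^+=0$.

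Finally, the antisymmetric computations \thref{Yk2firstindex,Xk2firstindex} carry over verbatim with $k\to m$. There the polynomial parts are sums $p_{i-1-j}\bra{A}p_{m+1-i+j}\bra{B}$ of total degree $m$, and only the endpoint terms survive by part (3). Substituting into \thref{secondorderequation} with index $m$ and cancelling $p_m\bra{B}\neq0$ gives the same rational identity as \eqref{firstpoly1} with $k$ replaced by $m$, hence $P\bra{\al,m}\equiv0\pmod p$.

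The main obstacle is the bookkeeping in the vanishing of intermediate terms. In Section \ref{section5} the argument often used simply that $p_i=0$ for $0<i<k$. Now $p_k\neq0$ with $k<m$, so individual lower power sums need not vanish, and every occurrence must be rewritten as a genuine product of lower power sums of total degree exactly $m$ before part (3) applies. The delicate case is the middle terms in the proof of \thref{secondordersymtwo}. There I would expand $\sum_{a\neq a'}a^i/(a-a')$ via \thref{firstordersymmetrisation} to exhibit the degree $i-1$ product structure, noting that the correction term $-i\,p_{i-1}\bra{A}$ still pairs with $p_{m+1-i}\bra{B}$ to give total degree $m$.
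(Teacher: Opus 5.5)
Your proposal is correct and follows the paper's proof. Like the paper, you rerun Section~\ref{section5} with $m$ in place of $k$, taking evenness, the relation $p_m(A)=-p_m(B)\neq 0$, and the lower-product vanishing from \thref{leastnonmultiple}; you also make explicit a step the paper only asserts, namely the middle terms $2\le i\le m$ in the proof of \thref{secondordersymtwo}, which you handle by writing $\sum_{a\neq a'}a^i/(a-a')=S_{i-1,1}(A)$ and expanding with \thref{firstordersymmetrisation} so that each term is a product of positive-index power sums of total degree $m$.
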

\begin{proof} 
By \thref{leastnonmultiple}, we have 
\[p_m\bra{A}=-p_m\bra{B}\neq0,\] 
the integer $m$ is even, and every product involving at least two positive-index power sums whose indices have total degree $m$ vanishes. The identity in \thref{secondorderequation} holds for every exponent, and therefore 
\[S_{m,2}\bra{B}=\frac{\bra{\al-1}\bra{\al-2}}{\al^2\bra{\al+2}}X_m\bra{B}+\frac{\bra{\al-1}\bra{\al^3-\al-2}}{\al^2\bra{\al+2}}Y_m\bra{B}.\] 
Applying \thref{generalsums} with $t=2$ gives 
\[S_{m,2}\bra{T}=\bra{\al^2-\bra{m+2}\al+\frac{\bra{m+1}\bra{m+2}}{3}}p_m\bra{T}\] 
for $T=A,B$. Indeed, after inclusion-exclusion, every term other than those containing $p_0\bra{T}=\al$ and $p_m\bra{T}$ is a product of positive-index power sums whose indices sum to $m$, and hence vanishes by \thref{leastnonmultiple}. We may now repeat the symmetric calculations of Section \ref{section5}. Adding the two identities obtained from \thref{secondorderequation} gives 
\[\bra{\al-2}\bra{X_m\bra{A}+X_m\bra{B}}+\bra{\al^3-\al-2}\bra{Y_m\bra{A}+Y_m\bra{B}}=0.\] 
The proof of \thref{secondordersymtwo}, with $m$ in place of $k$, gives 
\[\bra{3\al-2}\bra{X_m\bra{A}+X_m\bra{B}}+2\bra{\al-1}\bra{Y_m\bra{A}+Y_m\bra{B}}=0.\] 
Here the only terms discarded in that proof are products involving at least two positive-index power sums whose indices sum to $m$, so they vanish by \thref{leastnonmultiple}. Since the determinant of this system is nonzero, as shown in the proof of \thref{secondordersymvanish}, we obtain 
\[X_m\bra{A}+X_m\bra{B}=Y_m\bra{A}+Y_m\bra{B}=0.\] 
The antisymmetric calculations in \thref{Yk2firstindex,Xk2firstindex} also use only the parity of the exponent, the relation between the two power sums, and the same lower-product vanishing. Replacing $k$ by $m$ therefore gives 
\[Y_m\bra{B}=\frac{\al\bra{4\al-m^2-m-2}}{4\bra{\al-1}}p_m\bra{B}\] and 
\[X_m\bra{B}=\frac{\al\bra{2\al^2-2\bra{m+2}\al+m^2+m+2}}{2\bra{\al-2}}p_m\bra{B}.\] 
Substituting these formulas and the preceding expression for $S_{m,2}\bra{B}$ into \thref{secondorderequation}, and cancelling $p_m\bra{B}\neq0$, gives 
\[3\bra{m^2-3m-2}\al^2+4\bra{m^2+2}\al-\bra{m^2-3m+2}=0.\] Thus 
\[P\bra{\al,m}\equiv0\pmod p,\] 
as required. 
\end{proof}

\begin{lemma}\thlabel{rootuniqueness}
Let $\al>2$, let $M\geq 2$, and suppose that $p=M\al^2+1$ is prime. Then the congruence
\[P\bra{\al,x}\equiv 0\pmod p\]
has at most one solution with $1\leq x\leq \al$.
\end{lemma}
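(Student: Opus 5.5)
Since $P\bra{\al,x}$ is quadratic in $x$, I would argue by contradiction through a difference (Vieta-type) relation. Expanding,
\[P\bra{\al,x}=\bra{3\al^2+4\al-1}x^2+\bra{3-9\al^2}x-\bra{6\al^2-8\al+2}.\]
Suppose $x\neq y$ are two solutions with $1\leq x,y\leq\al$. Then $p\mid P\bra{\al,x}-P\bra{\al,y}$, and
\[P\bra{\al,x}-P\bra{\al,y}=\bra{x-y}\sqbra{\bra{3\al^2+4\al-1}\bra{x+y}+3-9\al^2}.\]
Since $0<\abs{x-y}<\al<p$, the factor $x-y$ is invertible modulo $p$. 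Writing $s=x+y$, so that $3\leq s\leq 2\al-1$, we obtain
\[\bra{3\al^2+4\al-1}s\equiv 9\al^2-3\pmod p.\]

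Next I would eliminate $\al^2$ using $p=M\al^2+1$, i.e.\ $M\al^2\equiv-1$. Multiplying the congruence by $M$ gives
\[p\mid N,\qquad N:=\bra{4M\al-M-3}s+3M+9.\]
Here $N$ is a positive integer, since $4M\al>M+3$, and $N<8M\al^2+3M+9$. Hence $N=jp$ for some integer $1\leq j\leq 8$, with $j\leq 7$ except possibly in tiny cases that can be checked directly. This yields the exact integer equation
\[\bra{4M\al-M-3}s=jM\al^2+j-3M-9.\]
Heuristically it forces $s\approx j\al/4$. Write $4s=j\al+r$ for an integer $r$ and compare both sides. Multiplying by $4$ and substituting, the $\al^2$ terms cancel and we get
\[4M\al r=j\bra{M+3}\al+\bra{M+3}r+4\bra{j-3M-9},\]
so $r$ is a small bounded quantity, roughly $r\approx j\bra{M+3}/\bra{4M}$. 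I would also reduce this equation modulo $\al$ and modulo $M$ to obtain divisibility constraints. For instance, $\al$ divides $\bra{M+3}r+4\bra{j-3M-9}$, and this quantity has size $O\bra{M}$ while $\al$ can be large relative to it only in restricted ways.

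The step I expect to be the main obstacle is closing this finite analysis. The sum relation alone may admit genuine integer solutions $\bra{\al,M,s}$ for small $j$, so I would then feed the information back into the original congruence. From $s$ one gets $xy\equiv\bra{\text{constant term}}/\bra{\text{leading coefficient}}$ modulo $p$. After again replacing $M\al^2$ by $-1$, this becomes an integer equation
\[\bra{4M\al-M-3}xy=j'p+\bra{\text{explicit small terms}},\]
with $xy\leq\al^2$ and hence $j'$ bounded. Combining the two exact equations determines $x$ and $y$ as roots of an integer quadratic. The remaining work is to check that its discriminant is not a perfect square, or that the roots fall outside $[1,\al]$, or that they violate the primality of $M\al^2+1$, for each of the finitely many pairs $\bra{j,j'}$ and small residual cases in $M$ and $\al$. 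I would carry out these finite checks first with $j\leq 7$, and handle small $\al$, say $\al\leq 10$, by direct verification.
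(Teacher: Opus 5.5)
Your opening matches the paper's argument. After cancelling $x-y$ and multiplying by $M$, your $j$ with $N=jp$ is the paper's $\eta$ (in fact $1\le j\le 7$ always), and your $r=4s-j\alpha$ is the paper's $\lambda$.

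The gap is in how you close the argument. The product relation does not give bounded data. In $(4M\alpha-M-3)xy=j'p+(\text{small terms})$ the left side can be of size about $4M\alpha^3\approx 4\alpha p$, so $j'$ ranges up to roughly $4\alpha$. There is therefore no finite list of pairs $(j,j')$, and the plan of checking finitely many integer quadratics collapses. Your other suggestions do not bound anything either. The divisibility $\alpha\mid (M+3)r+4(j-3M-9)$ only gives $\alpha=O(M)$ or vanishing of the right side, and $M$ is unbounded. Likewise, ``checking $\alpha\le 10$ directly'' is not a finite task until $M$ is pinned down, since for fixed $\alpha$ infinitely many $M$ are a priori allowed.

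The missing idea is that the sum relation alone already forces finiteness. Collecting the $M$-terms in your equation gives
\[M\bigl((4r-j)\alpha+12-r\bigr)=3j\alpha+3r+4j-36,\]
that is, $ML=R$ with $L,R$ linear in $\alpha$ and $L>0$. Hence $L$ divides $(4r-j)R-3jL=\Delta_{j,r}$, a constant independent of $\alpha$. You first need rigorous bounds such as $1\le r\le 8$; you only gave a heuristic. With those bounds, $L\mid\Delta_{j,r}$ bounds $\alpha$ whenever $\Delta_{j,r}\neq0$. Here $4r=j$ is impossible, since $4\mid j\alpha+r$ would then force $4\mid r$ and $j\ge 16$. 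The only pair in range with $\Delta_{j,r}=0$ is $(j,r)=(6,6)$, where $L=R$ forces $M=1$.

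So only finitely many triples $(\alpha,M,s)$ remain. Some genuinely occur with $p$ prime, namely $(3,4,4)$, $(11,6,17)$ and $(13,4,10)$. These are excluded by evaluating $P(\alpha,x)\bmod p$ at the finitely many pairs with $x+y=s$. No parametric discriminant analysis is needed.
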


\begin{proof}
Suppose for a contradiction that $r$ and $s$ are two distinct solutions with
\[1\leq r<s\leq\alpha,\]
and let $\sigma=r+s$. Then
\[3\leq\sigma\leq2\alpha-1.\]
Subtracting the two congruences gives
\[P\bra{\alpha,s}-P\bra{\alpha,r} =\bra{s^2-r^2}\bra{3\alpha^2+4\alpha-1}+\bra{s-r}\bra{-9\alpha^2+3}\]
\[=\bra{s-r}\left(\bra{3\alpha^2+4\alpha-1}\sigma-9\alpha^2+3\right).\]
Since $P\bra{\alpha,r}\equiv P\bra{\alpha,s}\equiv0\pmod p$ and $0<s-r<\alpha<p$, we may cancel $s-r$ modulo $p$. Hence
\[p\mid\bra{3\sigma-9}\alpha^2+4\sigma\alpha+3-\sigma.\]
The integer on the right is positive, so there is a positive integer $\nu$ such that
\[\bra{3\sigma-9}\alpha^2+4\sigma\alpha+3-\sigma=\nu\bra{M\alpha^2+1}.\]
Let
\[\eta=\nu M-\bra{3\sigma-9}.\]
Rearranging the preceding identity gives
\[\eta\alpha^2=4\sigma\alpha+3-\sigma-\nu.\]

We first show that
\[1\leq\eta\leq7.\]
Since $M\alpha^2+1>\alpha^2$, we have
\[\nu<3\sigma-9+\frac{4\sigma}{\alpha}+\frac{3-\sigma}{\alpha^2}.\]
Using $\sigma\leq2\alpha-1$, we obtain
\[\nu<3\sigma-9+\frac{4\bra{2\alpha-1}}{\alpha}+\frac{3-\sigma}{\alpha^2}=3\sigma-1-\frac{4}{\alpha}-\frac{\sigma-3}{\alpha^2}<3\sigma-1.\]
It follows that
\[4\sigma\alpha+3-\sigma-\nu>4\sigma\bra{\alpha-1}+4>0,\]
and therefore $\eta\geq1$. On the other hand,
\[\eta\alpha^2=4\sigma\alpha+3-\sigma-\nu\leq4\bra{2\alpha-1}\alpha-1<8\alpha^2.\]
Thus $\eta<8$, and hence $\eta\leq7$.
\smallbreak
We now define
\[\lambda=4\sigma-\eta\alpha.\]
Multiplying by $\alpha$ and using the identity for $\eta\alpha^2$ gives
\[\lambda\alpha=4\sigma\alpha-\eta\alpha^2=\sigma+\nu-3.\]
Since $\sigma\geq3$ and $\nu\geq1$, we have $\lambda\geq1$. Moreover,
\[\sigma=\frac{\eta\alpha+\lambda}{4}.\]
Substituting this into the identity for $\eta\alpha^2$ gives
\[\nu=\frac{\bra{4\lambda-\eta}\alpha+12-\lambda}{4}.\]
Using $\nu M=3\sigma-9+\eta$, we therefore obtain
\[M\left(\bra{4\lambda-\eta}\alpha+12-\lambda\right)=3\eta\alpha+3\lambda-36+4\eta.\]

Since $\nu\geq1$, we have
\[\bra{4\lambda-\eta}\alpha+12-\lambda=4\nu\geq4.\]
Since $M\geq1$, the preceding identity also gives
\[\bra{4\lambda-\eta}\alpha+12-\lambda\leq3\eta\alpha+3\lambda-36+4\eta.\]
Equivalently,
\[\bra{\eta-\lambda}\alpha+\lambda+\eta-12\geq0.\]
If $\lambda\leq\eta$, then $\lambda\leq7$. If $\lambda>\eta$, then
\[\bra{\lambda-\eta}\alpha\leq\lambda+\eta-12.\]
Since $\alpha\geq3$, this implies
\[3\bra{\lambda-\eta}\leq\lambda+\eta-12,\]
and hence
\[2\lambda\leq4\eta-12\leq16.\]
Thus, in every case,
\[1\leq\lambda\leq8.\]

The integrality of $\sigma$ and $\nu$, together with the preceding inequalities, now leaves only finitely many possibilities for $\eta$ and $\lambda$. Checking the four possible residue classes of $\alpha$ modulo $4$ for each pair with $1\leq\eta\leq7$ and $1\leq\lambda\leq8$ gives
\[
\begin{array}{c|c}
\eta&\lambda\\ \hline
3&1,2\\
5&1,2,3,4\\
6&2,4,6\\
7&2,3,4,5,6,7
\end{array}
\]

For each of these pairs, write
\[L_{\eta,\lambda}=\bra{4\lambda-\eta}\alpha+12-\lambda, \qquad R_{\eta,\lambda}=3\eta\alpha+3\lambda-36+4\eta.\]
We have
\[ML_{\eta,\lambda}=R_{\eta,\lambda},\qquad L_{\eta,\lambda}=4\nu>0.\]
In particular, $L_{\eta,\lambda}\mid R_{\eta,\lambda}$, and hence
\[L_{\eta,\lambda}\mid\bra{4\lambda-\eta}R_{\eta,\lambda}-3\eta L_{\eta,\lambda}.\]
The expression on the right is independent of $\alpha$ and is equal to
\[\Delta_{\eta,\lambda}=\bra{4\lambda-\eta}\bra{3\lambda-36+4\eta}-3\eta\bra{12-\lambda}.\]

The only listed pair for which $\Delta_{\eta,\lambda}=0$ is $\bra{\eta,\lambda}=\bra{6,6}$. In this case
\[L_{6,6}=R_{6,6}=18\alpha+6,\]
so $M=1$, which is impossible.
\smallbreak
For every other listed pair, $\Delta_{\eta,\lambda}\neq0$, and the positive integer $L_{\eta,\lambda}$ must be a divisor of $\abs{\Delta_{\eta,\lambda}}$. Since $L_{\eta,\lambda}$ is linear in $\alpha$, this leaves only finitely many possible values of $\alpha$. Enumerating these divisors and retaining only the cases in which $\alpha,M,\sigma,\nu$ are positive integers, $3\leq\sigma\leq2\alpha-1$, and $p=M\alpha^2+1$ is prime leaves
\[
\begin{array}{c|c|c|c|c|c}
\alpha&M&p&\sigma&\eta&\lambda\\ \hline
3&4&37&4&5&1\\
11&6&727&17&6&2\\
13&4&677&10&3&1
\end{array}
\]
The code for this finite divisor check is available at \cite{TyrrellCode}.
\smallbreak
It remains to verify that none of these possibilities gives two roots. If $\alpha=3$ and $\sigma=4$, the only possible pair is $\bra{r,s}=\bra{1,3}$, and
\[P\bra{3,1}\equiv P\bra{3,3}\equiv2\pmod{37}.\]
If $\alpha=11$ and $\sigma=17$, the possible pairs are
\[\bra{6,11},\qquad\bra{7,10},\qquad\bra{8,9},\]
and the corresponding common residues of $P\bra{11,r}$ and $P\bra{11,s}$ modulo $727$ are
\[190,\qquad20,\qquad662.\]
If $\alpha=13$ and $\sigma=10$, the possible pairs are
\[\bra{1,9},\qquad\bra{2,8},\qquad\bra{3,7},\qquad\bra{4,6},\]
and the corresponding common residues modulo $677$ are
\[159,\qquad315,\qquad233,\qquad590.\]
None of these residues is zero. This contradiction proves that the congruence has at most one solution in the range $1\leq x\leq\alpha$.
\end{proof}

The remainder of the section completes the proof of \thref{kdividesalpha}. Combining \thref{firstpoly,m,rootuniqueness}, we show that the power sums of $A$ and $B$ vanish at every index $1\leq j\leq\al$ which is not divisible by $k$. We then choose $S\in\{A,B\}$ such that $0\notin S$. Newton's identities imply that $e_j\bra{S}=0$ whenever $k\nmid j$. Since
\[e_\al\bra{S}=\prod_{s\in S}s\neq0,\]
it follows that $k\mid\al$.

\begin{prop}\thlabel{nonmultiplevanishing}
For every $1\leq i\leq\al$ with $k\nmid i$, we have
\[p_i\bra{A}=0.\]
\end{prop}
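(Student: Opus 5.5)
The argument is by contradiction, combining \thref{firstpoly}, \thref{m} and \thref{rootuniqueness}. Suppose some $1\leq i\leq\al$ has $k\nmid i$ and $p_i\bra{A}\neq0$, and let $m$ be the least such index. By \thref{leastnonmultiple} this $m$ is even and satisfies $p_m\bra{A}=-p_m\bra{B}\neq0$. Then \thref{m} gives
\[P\bra{\al,m}\equiv0\pmod p.\]
On the other hand, \thref{firstpoly} gives $P\bra{\al,k}\equiv0\pmod p$.

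Both $k$ and $m$ are integers in the range $[1,\al]$. They are distinct, because $k\mid k$ while $k\nmid m$. So the congruence $P\bra{\al,x}\equiv0\pmod p$ has two distinct solutions with $1\leq x\leq\al$.

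To apply \thref{rootuniqueness} we need $p=M\al^2+1$ with $M\geq2$. Since $H=\mu_d$ is a subgroup of $\F_p^*$, we have $d\mid p-1$, so $p-1=Md$ for some positive integer $M$. By \thref{kalmyninsize}, $d=\al^2$, hence $p=M\al^2+1$. Because $H$ is proper, $d<p-1$, which forces $M\geq2$. We also have $\al>2$ by the standing assumption.

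\thref{rootuniqueness} now says the congruence has at most one solution in $[1,\al]$. This contradicts the two solutions $k\neq m$ found above. Hence no such $m$ exists, and $p_i\bra{A}=0$ for every $1\leq i\leq\al$ with $k\nmid i$.

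The only step needing care is checking that the hypotheses of \thref{leastnonmultiple} and \thref{m} are exactly what the contradiction assumption supplies. The substantive work, namely the derivation of the congruence for $m$ and the root-uniqueness lemma, has already been done. For later use one can also record the statement for $B$: its vanishing follows from part (1) of \thref{leastnonmultiple}'s induction, or by rerunning that zeroth-order expansion now that $p_j\bra{A}=0$ for all nonmultiples $j\leq\al$.
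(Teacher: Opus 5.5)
Your proof is correct and is essentially the paper's own argument: take the least bad index $m$, obtain $P\bra{\al,m}\equiv0\pmod p$ from \thref{m} and $P\bra{\al,k}\equiv0\pmod p$ from \thref{firstpoly}, and contradict \thref{rootuniqueness} because $k\neq m$ both lie in $[1,\al]$. Your explicit check that $p=M\al^2+1$ with $M\geq2$ (from $d\mid p-1$, $d=\al^2$, and the properness of $H$) supplies a hypothesis of \thref{rootuniqueness} that the paper leaves implicit at this point.
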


\begin{proof}
Suppose not. Let $m$ be the least integer such that
\[1\leq m\leq\al,\qquad k\nmid m,\qquad p_m\bra{A}\neq0.\]
By \thref{m}, we have
\[P\bra{\al,m}\equiv0\pmod p.\]
On the other hand, by \thref{firstpoly}, we have
\[P\bra{\al,k}\equiv0\pmod p.\]
By \thref{klessthanalpha}, we have $1\leq k\leq\al$, and by construction $1\leq m\leq\al$. Moreover $m\neq k$, since $k\mid k$ but $k\nmid m$. Thus the congruence
\[P\bra{\al,x}\equiv0\pmod p\]
has two distinct solutions $x=k$ and $x=m$ in the range $1\leq x\leq\al$, contradicting \thref{rootuniqueness}. Therefore no such $m$ exists, and hence
\[p_i\bra{A}=0\]
for every $1\leq i\leq\al$ with $k\nmid i$.
\end{proof}

\begin{corollary}\thlabel{Bnonmultiplevanishing}
For every $1\leq i\leq\al$ with $k\nmid i$, we have
\[p_i\bra{B}=0.\]
\end{corollary}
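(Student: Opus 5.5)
We argue by strong induction on $i$, transferring the vanishing from $A$ to $B$ via the zeroth-order expansion exactly as in the first claim of \thref{leastnonmultiple}. Fix $1\leq i\leq\al$ with $k\nmid i$, and assume that $p_{j}\bra{B}=0$ for every $0<j<i$ with $k\nmid j$. Since $i\leq\al<d$, we have $\sum_{w\in\mu_d}w^i=0$. Uniqueness of representations in $A+B=\mu_d$ then gives
\[0=\sum_{a\in A,b\in B}\bra{a+b}^i=\sum_{\ell=0}^{i}\binom{i}{\ell}p_\ell\bra{A}p_{i-\ell}\bra{B}.\]

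We next check that every mixed term with $0<\ell<i$ vanishes. If $k\nmid\ell$, then $p_\ell\bra{A}=0$ by \thref{nonmultiplevanishing}, since $\ell<i\leq\al$. If instead $k\mid\ell$, then $k\nmid i-\ell$, because otherwise $k\mid i$. As $0<i-\ell<i$, the induction hypothesis gives $p_{i-\ell}\bra{B}=0$.

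Only the two endpoint terms survive, so
\[0=\al p_i\bra{A}+\al p_i\bra{B}.\]
By \thref{nonmultiplevanishing}, $p_i\bra{A}=0$. Since $0<\al<p$, the element $\al$ is invertible in $\F_p$, and therefore $p_i\bra{B}=0$.

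The base case $i=1$ is covered by the same computation, because there are no mixed terms. There is no genuine obstacle here. The only point needing care is that the case split on $k\mid\ell$ really does force every mixed product to vanish, and the dichotomy above handles this.
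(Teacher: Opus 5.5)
Your proof is correct and follows essentially the same route as the paper. Both use strong induction on $i$, expand $\sum_{a\in A,b\in B}(a+b)^i=0$ using unique representation in $A+B=\mu_d$, kill the mixed terms by the same case split on whether $k\mid\ell$, and conclude $p_i(B)=0$ from $\al p_i(B)=0$ via \thref{nonmultiplevanishing}.
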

\begin{proof}
We prove the result by induction on $i$. Let $1\leq i\leq\al$ with $k\nmid i$, and suppose that the result has already been proved for all $i' < i$ with $k \nmid i'$.
\smallbreak
Since $i\leq\al<\al^2=\abs{H}$, we have
\[\sum_{h\in H}h^i=0.\]
Using $H=A+B$ and the fact that every element of $H$ has a unique representation as $a+b$, this gives
\[0=\sum_{a\in A,b\in B}\bra{a+b}^i=\sum_{j=0}^i\binom{i}{j}p_j\bra{A}p_{i-j}\bra{B}.\]
We now examine the terms in this sum. The term with $j=0$ is $\al p_i\bra{B}$.
The term with $j=i$ is $\al p_i\bra{A}$, which is zero by \thref{nonmultiplevanishing}, since $k\nmid i$.
\smallbreak
Now let $1\leq j\leq i-1$. If $k\nmid j$, then $p_j\bra{A}=0$ by \thref{nonmultiplevanishing}. 
\smallbreak
If $k\mid j$, then $k\nmid i-j$, since $k\nmid i$. Also $0<i-j<i$, so the induction hypothesis gives $p_{i-j}\bra{B}=0$. Thus every mixed term with $1\leq j\leq i-1$ vanishes. 
\smallbreak
Therefore
\[0=\al p_i\bra{B},\]
and since $\al<p$,
\[p_i\bra{B}=0.\]
\end{proof}

We can now prove that $k$ divides $\al$.

\begin{proof}[Proof of \thref{kdividesalpha}]
Since $H=A+B\subseteq\F_p^*$, the sets $A$ and $B$ cannot both contain $0$. Choose $S\in\{A,B\}$ such that $0\notin S$.
\smallbreak
By \thref{nonmultiplevanishing} and \thref{Bnonmultiplevanishing}, we have
\[p_i\bra{S}=0\]
for every $1\leq i\leq\al$ with $k\nmid i$. We claim that
\[e_j\bra{S}=0\]
whenever $1\leq j\leq\al$ and $k\nmid j$. We prove this by induction on $j$. Newton's identities (\thref{newton}) give
\[j e_j\bra{S}=\sum_{i=1}^{j}\bra{-1}^{i-1}e_{j-i}\bra{S}p_i\bra{S}.\]
Since $j\leq\al<p$, the integer $j$ is nonzero in $\F_p$, so it is invertible. Suppose that $k\nmid j$. In each term on the right-hand side, either $k\nmid i$, in which case $p_i\bra{S}=0$, or $k\mid i$, in which case $k\nmid j-i$. In the latter case, if $j-i>0$, the induction hypothesis gives $e_{j-i}\bra{S}=0$; the case $j-i=0$ cannot occur because it would imply $k\mid j$. Hence the whole right-hand side vanishes, and so $e_j\bra{S}=0$.
\smallbreak
Now $0\notin S$, so
\[e_\al\bra{S}=\prod_{s\in S}s\neq0.\]
By the claim, this is possible only if $k\mid\al$. Therefore $k\mid\al$, as required.
\end{proof}

\section{Uniform arithmetic obstruction}
\label{section7}

The goal of this section is to prove that $P\bra{\al,k} \not \equiv 0 \pmod{p}$ for admissible $k$ and $\al$, where $P\bra{\al,k}$ is the polynomial in \thref{firstpoly}.

\begin{prop}\thlabel{pnotdivide}
Let $k,\al$ be positive even integers such that $k \mid \al$ and $\al > 2$. Let $M \geq 2$ be an integer, and let $p = M \al^2+1$ be a prime. Then
\[p \nmid P(\al,k).\]

\end{prop}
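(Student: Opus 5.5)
The case $k=2$ is handled first and separately, since it is degenerate. Here $k^2-3k+2=0$ and $k^2-3k-2=-4$, so
\[P\bra{\al,2}=-12\al^2+24\al=-12\al\bra{\al-2}.\]
Since $\al$ is even and $\al>2$, we have $\al\geq4$, so $p=M\al^2+1\geq33$. Hence $p$ divides none of $12$, $\al$ or $\al-2$, all of which are positive and less than $p$, and the claim follows. From now on I assume $k\geq4$. Write $k=2K$ and $\al=2Kn$ with $K\geq2$ and $n\geq1$. Then all three coefficients $k^2-3k-2$, $k^2+2$ and $k^2-3k+2$ of $P$ are positive. Moreover $P\bra{\al,k}>0$, because $3\bra{k^2-3k-2}\al^2$ dominates $k^2-3k+2$.

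Suppose for contradiction that $P\bra{\al,k}=\nu\bra{M\al^2+1}$ for a positive integer $\nu$. Following the pattern of \thref{rootuniqueness}, I set
\[\ell=\nu M-3\bra{k^2-3k-2}.\]
Rearranging gives the key identity
\[\ell\al^2=4\bra{k^2+2}\al-\bra{k^2-3k+2}-\nu.\]
I then bound $\ell$. From $p>\al^2$ I get $\nu<3\bra{k^2-3k-2}+4\bra{k^2+2}/\al$. Together with $k\leq\al$, this makes the right-hand side of the key identity positive, so $\ell\geq1$. In the other direction, $\ell\al^2<4\bra{k^2+2}\al$ gives $\ell<4\bra{k^2+2}/\al$, which is at most about $4k^2/\al=8K/n$. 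Substituting $\al=2Kn$ turns the key identity into an equation in $K,n,\ell,\nu$. The term $4k^2\al=32K^3n$ compared with $\ell\al^2=4\ell K^2n^2$ is what drives $\ell n$ towards $8K$. The lower-order terms $8\al$, $k^2-3k+2$ and $\nu$ (with $\nu$ of size roughly $3k^2/M\leq 6K^2$) shift this, and tracking them carefully should leave exactly the two alternatives announced in the sketch: either $\ell n=4K$, or $n=1$ and $\ell=4K-1$.

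For each alternative I then solve explicitly for $\nu$ from the key identity. I substitute into $\nu M=3\bra{k^2-3k-2}+\ell$, which expresses $M$ as a rational function of $K$ (and $n$). Requiring $M$ to be an integer with $M\geq2$ forces a divisibility condition, as in the $L_{\eta,\lambda}\mid\Delta_{\eta,\lambda}$ trick of \thref{rootuniqueness}: a linear expression in $K$ must divide a constant. This leaves finitely many $\bra{K,n}$. Each survivor is then checked directly, either for primality of $M\al^2+1$ or for $P\bra{\al,k}\not\equiv0$.

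The main obstacle is the middle step: organising the reduction of the key identity modulo $\al^2$ (equivalently, modulo $\al$ and then the quotient) sharply enough to isolate the two alternatives. The error terms $\nu$ and $k^2-3k+2$ are not negligible when $n$ is small. I would first treat $n=1$ (so $k=\al$) by hand, then handle $n\geq2$ using $\ell<8K/n+1$ and parity and congruence information from the key identity read modulo $2K$.
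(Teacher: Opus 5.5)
\textbf{This is a genuine gap.} Your skeleton is the paper's: treat $k=2$ separately, take the cofactor $\nu$, set $\ell=\nu M-3(k^2-3k-2)$, prove a dichotomy, then run finite checks. However, the two steps that carry the proof are missing, and the one you state is mis-normalised.

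Since $3(k^2-3k-2)=6(2K^2-3K-1)$, your $\ell$ is \emph{twice} the paper's parameter, which I will call $\ell_0=\ell/2$. Halving is legitimate because $\nu$ is even: $P(\al,k)$ is even and $p$ is odd. So your $\ell$ is even. In your normalisation the alternatives are $\ell n=8K$, or $n=1$ and $\ell=8K-2$, exactly as your heuristic $\ell n\to 8K$ suggests. The pair you quote belongs to $\ell_0$, and $\ell=4K-1$ cannot occur for your even $\ell$.

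More importantly, you leave the dichotomy at ``should''. No reduction modulo $\al^2$ or $2K$ is needed; it follows from an exact formula plus a size bound. Put $e=8K-\ell n$, which is even. The key identity gives exactly
\[\nu=4K^2ne+16Kn-4K^2+6K-2,\]
while $\nu M=12K^2-18K-6+\ell$ with $M\ge2$ gives $\nu\le 6K^2-9K-3+\ell/2$. Then $e<0$ forces $\nu<0$. Once $e\ge0$ we have $\ell n\le 8K$, and then $e\ge2$ with $n\ge2$, or $e\ge4$ with $n=1$, pushes $\nu$ above that bound. Only $e=0$, or $e=2$ with $n=1$, survive.

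Your finishing plan also does not produce finiteness as stated. In the main alternative $\ell_0 n=4K$ the integer $n=4K/\ell_0$ is not fixed, and
\[M=\frac{6K^2-9K-3+\ell_0}{\bra{32/\ell_0-2}K^2+3K-1}\]
is a ratio of two quadratics in $K$ with $\ell_0$ free. So there is in general no ``linear expression in $K$ divides a constant'' relation.

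The paper proceeds in stages:
\begin{enumerate}
\item It first proves $5\le\ell_0\le20$, using $\nu>0$, the upper bound on $\nu$, and $\ell_0\mid 4K$.
\item For $\ell_0\le15$ it eliminates $K^2$ via
\[\bra{6\ell_0-\bra{32-2\ell_0}M}\frac{\nu}{2}=288K+2\bra{\ell_0^2-22\ell_0+48}\]
to get $M<3\ell_0/(16-\ell_0)$. This leaves finitely many quadratics in $K$, which are checked via their discriminants.
\item Only at $\ell_0=16$ does the denominator collapse to $3K-1$, so that your divisibility trick applies: $3K-1\mid32$.
\item For $17\le\ell_0\le20$, $K$ is bounded directly.
\end{enumerate}
The other alternative, $n=1$ and $\ell_0=4K-1$, is excluded by showing $M=2$ for $K\ge5$ and then reducing modulo $K$. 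Without these two arguments, the proposal is a correct outline of the paper's strategy rather than a proof.
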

\begin{remark}\thlabel{knot2}
    In the case $k=2$, \thref{pnotdivide} is easy. We have 
    \[P\bra{\al,2}=-12\al\bra{\al-2}.\] 
    But since $\al > 2$ and $p>\al^2$, none of $\al$, $\al-2$ or $12$ are $0$, and thus
    \[P\bra{\al,2} \not \equiv 0 \pmod{p}.\]
\end{remark}
We prove \thref{pnotdivide} by contradiction, reducing to two cases, and showing that neither can hold.

\begin{lemma}\thlabel{dichotomy}
Let $K\geq 2$, $n\geq 1$, and $M\geq 2$. Put
\[\al=2Kn,\qquad p=M\al^2+1.\]
Suppose that
\[p\mid P\bra{\al,2K}.\]
Let $\mu$ be the positive integer such that
\[P\bra{\al,2K}=\mu p,\]
and define
\[\ell:=\frac{\mu M-6\bra{2K^2-3K-1}}{2}.\]
Then $\ell$ is a positive integer,
\[\frac{\mu}{2}=4Kn\bra{4K^2+2-\ell Kn}-2K^2+3K-1,\]
and
\[\frac{\mu M}{2}=6K^2-9K-3+\ell.\]
Moreover, either
\[\ell n=4K,\]
or
\[n=1,\qquad \ell=4K-1.\]
\end{lemma}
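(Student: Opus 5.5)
The plan is to substitute $\al=2Kn$ and $x=2K$ into $P$, read off the two displayed identities by rearranging $P(\al,2K)=\mu p$, and then squeeze the quantity $t:=4K^2+2-\ell Kn$ between two bounds to force the dichotomy.

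\emph{Step 1: expand $P(\al,2K)$.} With $x=2K$ we have $x^2-3x-2=2(2K^2-3K-1)$, $x^2+2=4K^2+2$ and $x^2-3x+2=2(2K^2-3K+1)$. Hence
\[P(\al,2K)=6(2K^2-3K-1)\al^2+8Kn(4K^2+2)-2(2K^2-3K+1).\]
Every term is even and $p$ is odd, so $\mu$ is even. Since $M\geq2$, the number $\mu M-6(2K^2-3K-1)$ is also even, and therefore $\ell$ is an integer.

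\emph{Step 2: the identity for $\mu/2$.} Write $\mu p=\mu M\al^2+\mu$ and subtract $6(2K^2-3K-1)\al^2$ from both sides. This gives
\[2\ell\al^2=8Kn(4K^2+2)-2(2K^2-3K+1)-\mu.\]
Substituting $\al^2=4K^2n^2$ and dividing by $2$ yields exactly the stated formula for $\mu/2$. The formula for $\mu M/2$ is just the definition of $\ell$ rearranged.

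\emph{Step 3: positivity.}
\begin{itemize}
\item $\mu>0$, because $2K^2-3K-1>0$ for $K\geq2$, so every term of $P(\al,2K)$ is dominated by the positive $\al^2$ term.
\item $\ell>0$: since $M\geq 2$, we have $\mu<P(\al,2K)/(2\al^2)$, which is roughly $3(2K^2-3K-1)$. This is far smaller than $8Kn(4K^2+2)-2(2K^2-3K+1)$, so the right-hand side of the Step 2 identity is positive. Hence $\ell\geq1$.
\end{itemize}
I would verify the inequality $\mu<8Kn(4K^2+2)-2(2K^2-3K+1)$ with a crude estimate.

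\emph{Step 4: the dichotomy.} This is the main step. In terms of $t$, the formula for $\mu/2$ reads
\[\mu/2=4Knt-2K^2+3K-1.\]
\begin{itemize}
\item Lower bound: since $\mu/2\geq1$ and $-2K^2+3K-1<0$, we need $t\geq1$.
\item Divisibility: $\ell Kn=4K^2+2-t$ forces $K\mid t-2$. So $t=2+jK$ for an integer $j$, and $t\geq1$ with $K\geq2$ forces $j\geq0$.
\item Upper bound: combining $\mu/2\leq \mu M/4=(6K^2-9K-3+\ell)/2$ with the formula for $\mu/2$ gives
\[4Knt\leq 5K^2-\tfrac{15}{2}K-\tfrac12+\tfrac{\ell}{2},\]
where $\ell=(4K^2+2-t)/(Kn)\leq 4K$.
\end{itemize}
Now split on $j$.
\begin{itemize}
\item If $j=0$, then $t=2$ and $\ell Kn=4K^2$, that is, $\ell n=4K$.
\item If $j\geq1$ and either $n\geq2$ or $j\geq2$, then $4Knt\geq 8K(K+2)$ or $4K(2K+2)$. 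Both exceed $5K^2+2K$, which is at least the right-hand side of the upper bound; this is a contradiction.
\item What remains is $j=1$, $n=1$. Then $t=K+2$ and $\ell K=4K^2-K$, that is, $\ell=4K-1$.
\end{itemize}
The only delicacy I expect is keeping the inequality constants honest for small $K$. If needed, I would handle $K=2,3$ directly.
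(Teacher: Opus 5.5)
Your proposal is correct and takes essentially the same route as the paper. Writing $P(\al,2K)=C\al^2+D$, both arguments rearrange $P(\al,2K)=\mu p$ into $2\ell\al^2=D-\mu$. Your $j$ (with $t=2+jK$) is exactly the paper's $e=4K-\ell n$, and the dichotomy comes from the same case split on $e$ and $n$. The differences are minor. You get $\ell>0$ from the size estimate $\mu<P(\al,2K)/(2\al^2)$ rather than the paper's identity $MD-C=2\ell p$. You also use $M\geq 2$ to bound $\mu/2\leq \mu M/4$ with the crude $\ell\leq 4K$, whereas the paper uses $\mu/2\leq\mu M/2$ with sharper case-by-case bounds on $\ell$. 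Your constants already hold for every $K\geq 2$, so no separate check of $K=2,3$ is needed.
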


\begin{proof}
Throughout this proof, we write
\begin{align*}C_K&=6\bra{2K^2-3K-1},\\
D_{\al,K}&=8\bra{2K^2+1}\al-2\bra{2K^2-3K+1},\end{align*}
so that $P\bra{\al,2K}=C_K \al^2 + D_{\al,K}$.
\bigbreak
Since $K\geq 2$ and $n\geq 1$, we have $P\bra{\al,2K}>0$, so $\mu$ is positive. Using $p=M\al^2+1$, the identity $P\bra{\al,2K}=\mu p$ becomes
\begin{equation}\label{eq:quotient-main}
C_K\al^2+D_{\al,K}=\mu\bra{M\al^2+1}.
\end{equation}
Since $\al$ is even, $p=M\al^2+1$ is odd. Also $P\bra{\al,2K}$ is even, so $\mu$ is even. Since $C_K$ is even, $\mu M-C_K$ is even, and hence $\ell$ is an integer.
\smallbreak
We next show that $\ell>0$. Multiplying \eqref{eq:quotient-main} by $M$ and using $M\al^2=p-1$, we get
\[C_K\bra{p-1}+MD_{\al,K}=\mu Mp.\]
Rearranging gives
\[MD_{\al,K}-C_K=\bra{\mu M-C_K}p=2\ell p.\]
Moreover,
\[MD_{\al,K}-C_K\geq D_{\al,K}-C_K=4\bra{8K^3n-4K^2+4Kn+6K+1}>0.\]
Thus $\ell>0$.
\smallbreak
By the definition of $\ell$, we have
\[\mu M=C_K+2\ell.\]
Substituting this into \eqref{eq:quotient-main} gives
\[C_K\al^2+D_{\al,K}=\bra{C_K+2\ell}\al^2+\mu.\]
Hence
\[\mu=D_{\al,K}-2\ell\al^2.\]
Dividing the previous identity by $2$ and using $\al=2Kn$ gives
\begin{align}
\frac{\mu}{2}&=\frac{D_{\al,K}}{2}-\ell\al^2\\
&=4Kn\bra{4K^2+2-\ell Kn}-2K^2+3K-1. \label{eq:R-formula}
\end{align}
Since $\mu>0$, this proves
\[4Kn\bra{4K^2+2-\ell Kn}-2K^2+3K-1>0.\]
Dividing $\mu M=C_K+2\ell$ by $2$ gives
\[\frac{\mu M}{2}=6K^2-9K-3+\ell.\]
Using \eqref{eq:R-formula}, this is exactly
\[M\bra{4Kn\bra{4K^2+2-\ell Kn}-2K^2+3K-1}=6K^2-9K-3+\ell.\]
\smallbreak
We now show that the two identities just obtained force either $\ell n=4K$ or $n=1$ and $\ell=4K-1$. Since $M \geq 2$ and
\[\frac{\mu}{2}M=6K^2-9K-3+\ell,\]
we have
\begin{equation}\label{eq:R-bound}
0<\frac{\mu}{2}\leq 6K^2-9K-3+\ell.
\end{equation}
Define
\[e=4K-\ell n.\]
Then
\[4K^2+2-\ell Kn=2+Ke,\]
so \eqref{eq:R-formula} becomes
\begin{equation}\label{eq:R-e-formula}
\frac{\mu}{2}=\bra{4en-2}K^2+\bra{8n+3}K-1.
\end{equation}
If $e<0$, then $2+Ke\leq 2-K\leq 0$, and hence \eqref{eq:R-formula} gives
\[\frac{\mu}{2}\leq -2K^2+3K-1<0,\]
contradicting $\mu>0$. Therefore $e\geq 0$.
\smallbreak
Suppose first that $e\geq 1$ and $n\geq 2$. Since
\[\ell =\frac{4K-e}{n},\]
we have $\ell \leq 2K-1$. Hence
\[6K^2-9K-3+\ell \leq 6K^2-7K-4.\]
On the other hand, \eqref{eq:R-e-formula} gives
\[\frac{\mu}{2}\geq 6K^2+19K-1.\]
This contradicts \eqref{eq:R-bound}.
\smallbreak
Suppose next that $e\geq 2$ and $n=1$. Then $\ell =4K-e\leq 4K-2$, so
\[6K^2-9K-3+\ell \leq 6K^2-5K-5.\]
But \eqref{eq:R-e-formula} gives
\[\frac{\mu}{2}\geq 6K^2+11K-1,\]
again contradicting \eqref{eq:R-bound}.
\smallbreak
Thus the only remaining possibilities are $e=0$, or $e=1$ and $n=1$. Since $e=4K-\ell n$, these are exactly
\[\ell n=4K\]
and
\[n=1,\qquad \ell =4K-1.\]
\end{proof}

We now rule out the two options in \thref{dichotomy} in turn.

\begin{lemma}\thlabel{case1}
The case
\[n=1,\qquad \ell =4K-1\]
in \thref{dichotomy} is impossible.
\end{lemma}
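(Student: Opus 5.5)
In this case we have $e=4K-\ell n=1$ and $n=1$, so both identities of \thref{dichotomy} become explicit polynomials in $K$ alone. The plan is to turn them into a single Diophantine equation in $M$ and $K$ and then bound $M$ until no integer solution is left.

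First we substitute $e=1$, $n=1$ into \eqref{eq:R-e-formula}, which gives
\[\frac{\mu}{2}=(4-2)K^2+(8+3)K-1=2K^2+11K-1.\]
Next we substitute $\ell=4K-1$ into $\frac{\mu M}{2}=6K^2-9K-3+\ell$, which gives
\[\frac{\mu M}{2}=6K^2-5K-4.\]
Combining the two yields
\[M\bra{2K^2+11K-1}=6K^2-5K-4.\]

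The key step is to pin down $M$ by comparing sizes. Since $K\geq2$, we have $2K^2+11K-1>0$. Also
\[3\bra{2K^2+11K-1}=6K^2+33K-3>6K^2-5K-4,\]
so $M<3$. Together with the hypothesis $M\geq2$, this forces $M=2$.

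With $M=2$ the equation becomes $4K^2+22K-2=6K^2-5K-4$, that is,
\[2K^2-27K-2=0.\]
Its discriminant is $27^2+16=745$. Since $27^2=729<745<784=28^2$, it is not a perfect square, so there is no integer root $K$. This contradicts the existence of $K\geq2$, so this case cannot occur.

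I do not expect a genuine obstacle here. The only care needed is to get the specialisation of \eqref{eq:R-e-formula} right and to check the inequality $M<3$; both are short computations.
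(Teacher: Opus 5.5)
Your proof is correct and follows the paper's route. You make the same specialisation to $\mu/2=2K^2+11K-1$ and $\mu M/2=6K^2-5K-4$, and use the same comparison $6K^2-5K-4<3\bra{2K^2+11K-1}$ to force $M=2$. The only difference is the final step: you rule out $2K^2-27K-2=0$ because its discriminant $745$ is not a square, whereas the paper reduces modulo $K$ to get $K\mid 2$, which is why it must first dispose of $K=2,3,4$ by a direct check that your argument does not need.
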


\begin{proof}
Assume that
\[n=1,\qquad \ell =4K-1.\]

Substituting $n=1$ and $\ell =4K-1$ gives
\[\frac{\mu}{2}=2K^2+11K-1\]
and by \thref{dichotomy}
\[\frac{\mu M}{2}=6K^2-9K-3+\ell =6K^2-5K-4.\]
Thus
\[\frac{\mu}{2}\mid 6K^2-5K-4.\]
For $K=2,3,4$, one checks directly that
\[\frac{\mu}{2}>6K^2-5K-4,\]
contradicting the divisibility. We may therefore assume $K\geq5$.
\smallbreak
We also have
\[6K^2-5K-4<3\bra{2K^2+11K-1},\]
and hence $M<3$. Since $M\geq2$, it follows that $M=2$. Therefore
\[2\bra{2K^2+11K-1}=6K^2-5K-4.\]
Reducing this equality modulo $K$ gives
\[-2\equiv-4\pmod K,\]
and hence $K\mid2$, contradicting $K\geq5$.
\end{proof}

\begin{lemma}\thlabel{case2part1}
Suppose that
\[\ell n=4K\]
in \thref{dichotomy}. Then
\[5\leq \ell \leq20.\]
\end{lemma}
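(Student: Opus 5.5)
Write $e=4K-\ell n$ as in the proof of \thref{dichotomy}. The hypothesis $\ell n=4K$ means $e=0$ and $n=4K/\ell$. I will substitute this into \eqref{eq:R-e-formula} and squeeze $\ell$ between two inequalities already available from \thref{dichotomy}:
\[0<\frac{\mu}{2}\qquad\text{and}\qquad \frac{\mu}{2}M=6K^2-9K-3+\ell \text{ with } M\geq 2.\]
With $e=0$, \eqref{eq:R-e-formula} becomes
\[\frac{\mu}{2}=-2K^2+(8n+3)K-1=\Bigl(\frac{32}{\ell}-2\Bigr)K^2+3K-1.\]
Since $n\geq1$ is an integer, we also have $\ell\mid 4K$ and $\ell\leq 4K$.

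\emph{Lower bound.} Suppose $\ell\leq4$. Then $32/\ell-2\geq6$, so
\[\frac{\mu}{2}\geq 6K^2+3K-1.\]
But $\frac{\mu}{2}M=6K^2-9K-3+\ell$ with $M\geq2$ gives
\[\frac{\mu}{2}\leq \frac{6K^2-9K+1}{2}.\]
Even with the weaker bound $\frac{\mu}{2}\leq 6K^2-9K+1$ this is a contradiction, since $6K^2+3K-1>6K^2-9K+1$ for $K\geq1$. Hence $\ell\geq5$.

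\emph{Upper bound.} Suppose $\ell\geq21$. Then $c:=2-32/\ell>0$ and
\[\frac{\mu}{2}=3K-1-cK^2.\]
Positivity of $\mu$ therefore forces $cK^2<3K-1$. I will contradict this using $K\geq \ell/4$, which gives
\[cK\geq \frac{\ell-16}{2}.\]
\begin{itemize}
\item If $\ell\geq22$, then $cK\geq3$, so $cK^2\geq 3K>3K-1$.
\item If $\ell=21$, then $21\mid 4K$ forces $21\mid K$, so $K\geq21$. Then $cK\geq \frac{10}{21}\cdot 21=10>3$, and again $cK^2>3K-1$.
\end{itemize}
In both cases $\mu\leq0$, a contradiction. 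Hence $\ell\leq 20$.

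The only delicate point is the boundary $\ell=21$. There the crude estimate $cK\geq(\ell-16)/2=5/2$ is just short of $3$, so the divisibility $\ell\mid 4K$ has to be used to push $K$ up. All other steps are direct substitutions into the identities of \thref{dichotomy}.
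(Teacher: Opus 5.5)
Your proof is correct and follows essentially the same route as the paper. The lower bound comes from comparing $\frac{\mu}{2}=\bigl(\frac{32}{\ell}-2\bigr)K^2+3K-1$ with $\frac{\mu}{2}\le 6K^2-9K-3+\ell$, and the upper bound from combining $\ell\le 4K$ with $\mu>0$. The only difference is cosmetic: the paper splits on $K\le 5$ versus $K\ge 6$ rather than on $\ell\ge 22$ versus $\ell=21$, and since $-\frac{10}{21}K^2+3K-1<0$ already holds for $K\ge 6$, your divisibility step $21\mid K$ is harmless but not needed.
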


\begin{proof}
Assume that \thref{dichotomy} holds with
\[\ell n=4K,\]
and so
\[\frac{\mu}{2}=\bra{\frac{32}{\ell }-2}K^2+3K-1.\]
By \thref{dichotomy}, $\frac{\mu}{2}\leq6K^2-9K-3+\ell $, and hence
\[\bra{\frac{32}{\ell }-2}K^2+3K-1\leq6K^2-9K-3+\ell.\]
Multiplying by $\ell $ and rearranging gives
\[0\leq8\bra{\ell -4}K^2-12\ell K+\ell \bra{\ell -2}.\]
If $\ell \leq4$, the right-hand side is negative for every $K\geq2$, a contradiction. Hence $\ell \geq5$.
\smallbreak
It remains to prove the upper bound. If $K\leq5$, then $\ell \mid4K$ gives
\[\ell \leq4K\leq20.\]
If $K\geq6$ and $\ell \geq21$, then
\[\frac{\mu}{2}=\bra{\frac{32}{\ell }-2}K^2+3K-1\leq-\frac{10}{21}K^2+3K-1<0,\]
contradicting $\mu>0$. Therefore $\ell \leq20$.
\end{proof}

\begin{lemma}\thlabel{case2part2}
The case
\[\ell n=4K\]
in \thref{dichotomy} is impossible.
\end{lemma}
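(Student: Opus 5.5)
We would argue by contradiction, keeping the notation of \thref{dichotomy}. By \thref{case2part1} we have $5\leq\ell\leq20$ and $\ell n=4K$, so $\ell\mid 4K$. Put $R=\frac{\mu}{2}$. Then
\[\ell R=(32-2\ell)K^2+3\ell K-\ell,\qquad MR=6K^2-9K-3+\ell,\]
with $R>0$ and $M\geq2$. For each fixed $\ell$, the task is to show that these two relations force $K$ into a finite range, and then to check that range directly.

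The main step is to eliminate $K^2$. Multiplying the second relation by $\ell(32-2\ell)$ and subtracting $6\ell$ times the first gives
\[\ell R\bigl((32-2\ell)M-6\ell\bigr)=\ell\bigl[(32-2\ell)(-9K-3+\ell)-18\ell K+6\ell\bigr]=:\ell\,\Lambda_\ell(K),\]
where $\Lambda_\ell(K)$ is linear in $K$. So $R\mid\Lambda_\ell(K)$. We then split according to $\ell$.
\begin{itemize}
\item If $\ell<16$, then $R$ grows quadratically in $K$ while $\Lambda_\ell(K)$ grows only linearly. Hence either $\Lambda_\ell(K)=0$, which determines $K$ (possibly non-integrally), or $R\leq\abs{\Lambda_\ell(K)}$, which bounds $K$ explicitly. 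In the bounded range we impose $(32-2\ell)M=6\ell$ exactly when $\Lambda_\ell(K)=0$; otherwise we test the finitely many pairs $(K,\ell)$.
\item If $\ell=16$, then $R=3K-1$ is linear. The identity $3(6K^2-9K+13)=(3K-1)(6K-7)+32$ gives $3K-1\mid 32$, so only finitely many $K$ remain.
\item If $17\leq\ell\leq20$, the leading coefficient $32-2\ell$ is negative. Positivity of $R$ then bounds $K$ directly, for example $K\leq 3\ell/(2\ell-32)$ up to lower-order terms.
\end{itemize}

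This leaves a finite list of candidates $(\ell,K)$, subject to $K\geq2$, $\ell\mid4K$ and $n=4K/\ell$. For each one we check whether $R$ is a positive integer and whether $M=(6K^2-9K-3+\ell)/R$ is an integer with $M\geq2$. For any survivor we also check whether $p=M\alpha^2+1$, with $\alpha=2Kn$, is prime and whether $p\mid P(\alpha,2K)$. We expect every candidate to fail one of these tests, already at the integrality or $M\geq2$ stage.

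The main obstacle is bookkeeping rather than ideas. The case $\ell<16$ with $\Lambda_\ell(K)\equiv0$ identically in $K$ must be handled, and we expect it not to occur: that would require $(32-2\ell)\cdot 9+18\ell=0$ and the constant term to vanish, and the first condition already gives $288=0$. The explicit bounds on $K$ must also be kept small enough for a short hand or computer check, as was done in \thref{rootuniqueness}.
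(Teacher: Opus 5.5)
Your proposal is correct and follows essentially the same route as the paper. It uses the same split into $5\leq\ell\leq15$, $\ell=16$ and $17\leq\ell\leq20$, the same $3K-1\mid32$ argument, and the same positivity bound for $\ell\geq17$. Your $\Lambda_\ell(K)$ simplifies to $-288K-2(\ell^2-22\ell+48)$, which is exactly the paper's relation $\bigl(6\ell-(32-2\ell)M\bigr)\frac{\mu}{2}=288K+2(\ell^2-22\ell+48)$.

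The only difference is for $5\leq\ell\leq15$. There you use $\abs{\Lambda_\ell(K)}$ to bound $K$, which gives a range of up to about $2137$ when $\ell=15$. The paper instead uses the sign $\Lambda_\ell(K)<0$ to get $M<3\ell/(16-\ell)$, and then solves a quadratic in $K$ for each of the few pairs $(\ell,M)$. This is a much smaller finite check, and it also makes your case $\Lambda_\ell(K)=0$ vacuous.
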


\begin{proof}
Assume that \thref{dichotomy} holds with
\[\ell n=4K.\]
By \thref{case2part1}, we have
\[5\leq\ell\leq20.\]
Moreover, $\ell\mid4K$, and \thref{dichotomy} gives
\[\frac{\mu}{2}=\bra{\frac{32}{\ell}-2}K^2+3K-1\]
and
\[\frac{\mu M}{2}=6K^2-9K-3+\ell.\]
Equivalently,
\[\ell\frac{\mu}{2}=\bra{32-2\ell}K^2+3\ell K-\ell.\]

First suppose that
\[5\leq\ell\leq15.\]
Multiplying the preceding identity by $M$ and using the formula for $\mu M/2$, we obtain
\[M\bra{\bra{32-2\ell}K^2+3\ell K-\ell}=\ell\bra{6K^2-9K-3+\ell}.\]
Rearranging gives
\[\bra{2M\bra{16-\ell}-6\ell}K^2+3\ell\bra{M+3}K+\ell\bra{3-\ell-M}=0.\]

We next bound $M$. Using the two identities involving $\mu/2$ and $\mu M/2$, we have
\[\bra{6\ell-\bra{32-2\ell}M}\frac{\mu}{2} =6\left(\ell\frac{\mu}{2}\right)-\bra{32-2\ell}\frac{\mu M}{2}\]
\[=288K+2\bra{\ell^2-22\ell+48}.\]
For $5\leq\ell\leq15$ and $K\geq2$, the right-hand side is positive. Since $\mu/2>0$, it follows that
\[6\ell-\bra{32-2\ell}M>0.\]
Therefore
\[1\leq M<\frac{3\ell}{16-\ell}.\]

There are consequently only finitely many possible pairs $\bra{\ell,M}$. For each pair satisfying
\[5\leq\ell\leq15,\qquad 1\leq M<\frac{3\ell}{16-\ell},\]
the equation
\[\bra{2M\bra{16-\ell}-6\ell}K^2+3\ell\bra{M+3}K+\ell\bra{3-\ell-M}=0\]
is a quadratic equation in $K$. Its discriminant is
\[\Delta_{\ell,M}=9\ell^2\bra{M+3}^2-4\bra{2M\bra{16-\ell}-6\ell}\ell\bra{3-\ell-M}.\]
Since $K$ is an integer, $\Delta_{\ell,M}$ must be a square. Checking the finite range above, the only cases in which $\Delta_{\ell,M}$ is a square and the quadratic has a positive rational root are
\[
\begin{array}{c|c|c}
\ell&M&\text{positive roots }K\\ \hline
8&2&\frac12,\ 7\\
14&3&\frac76,\ \frac73\\
14&14&\frac12,\ 25
\end{array}
\]
None of these gives an integer $K$ satisfying $\ell\mid4K$. For $\bra{\ell,M}=\bra{8,2}$, the only positive integer root is $K=7$, but $8\nmid28$. For $\bra{\ell,M}=\bra{14,3}$, neither positive root is an integer. For $\bra{\ell,M}=\bra{14,14}$, the only positive integer root is $K=25$, but $14\nmid100$. Hence
\[5\leq\ell\leq15\]
is impossible. The code for this finite check is available at \cite{TyrrellCode}.

Now suppose that
\[\ell=16.\]
Then
\[\frac{\mu}{2}=3K-1.\]
Since $\ell\mid4K$, we have $4\mid K$. Moreover,
\[\frac{\mu M}{2}=6K^2-9K+13,\]
so
\[3K-1\mid6K^2-9K+13.\]
Consequently,
\[3K-1\mid3\bra{6K^2-9K+13}.\]
But
\[3\bra{6K^2-9K+13}=\bra{6K-7}\bra{3K-1}+32,\]
and hence
\[3K-1\mid32.\]
Since $4\mid K$ and $K\geq2$, we have $K\geq4$, so $3K-1\geq11$. The only positive divisors of $32$ which are at least $11$ are $16$ and $32$, and neither is of the form $3K-1$ with $4\mid K$. Therefore $\ell=16$ is impossible.
\smallbreak
It remains to consider
\[17\leq\ell\leq20.\]
Since $\mu/2>0$, we have
\[\bra{32-2\ell}K^2+3\ell K-\ell>0.\]
Thus
\[\bra{2\ell-32}K^2<3\ell K-\ell<3\ell K,\]
and hence
\[K<\frac{3\ell}{2\ell-32}.\]
Combining this bound with $\ell\mid4K$ leaves only the following possibilities:
\[
\begin{array}{c|c|c|c}
\ell&K&\frac{\mu}{2}&\bra{6K^2-9K-3+\ell}\bmod\frac{\mu}{2}\\ \hline
17&17&16&11\\
18&9&8&4\\
20&5&4&2
\end{array}
\]
In each case,
\[\frac{\mu}{2}\nmid6K^2-9K-3+\ell,\]
contradicting the identity
\[\frac{\mu M}{2}=6K^2-9K-3+\ell.\]
Therefore the case $\ell n=4K$ is impossible.
\end{proof}

\begin{proof}[Proof of \thref{pnotdivide}.]
The case $k=2$ follows from \thref{knot2}. We may therefore write
\[k=2K\]
for some integer $K\geq2$. Since $k\mid\al$, there is an integer $n\geq1$ such that
\[\al=2Kn.\]
Suppose for a contradiction that
\[p\mid P\bra{\al,k}.\]
By \thref{dichotomy}, either
\[n=1,\qquad \ell=4K-1,\]
or
\[\ell n=4K.\]
The first alternative is excluded by \thref{case1}, and the second is excluded by \thref{case2part2}. This contradiction proves that
\[p\nmid P\bra{\al,k},\]
as required.
\end{proof}

We are now ready to prove our main result.
\begin{proof}[Proof of \thref{main}.]
Let $H \leq \F_p^*$ be a proper multiplicative subgroup of $\F_p$, and assume that we can write
\[H = A + B,\]
where $\abs{A}, \abs{B} > 2$. If we let $\abs{A}=\al$, then it follows from \thref{kalmyninsize} that $p = M \al^2 +1 $ for some positive integer $M \geq 2$. Translating $A$ and $B$ if necessary so that $p_1\bra{A} = 0$, let $k \geq 2$ be the least positive integer such that
\[\sum_{a \in A}a^k \neq 0.\]
By \thref{keven} we know $k$ is even, and from \thref{kdividesalpha} that $k \mid \al$, which implies $\al$ is also even. Thus, by \thref{pnotdivide}, we have $P\bra{\al,k}\not \equiv 0 \pmod p$, for $\al >2$. But by \thref{firstpoly}, $\al,k$ satisfy $P\bra{\al,k}\equiv0 \pmod p$, which is a contradiction, so we cannot write $H=A+B$ with $\abs{A},\abs{B} >2$.
\smallbreak
The case when $\abs{A},\abs{B}$ are not both greater than $2$ is dealt with in \thref{trivial}, giving exactly the two cases in the statement of \thref{main}.
 
\end{proof}

\appendix

\section{Combinatorial identities}

In this appendix, we prove \thref{generalsums}, and then deduce the explicit formula in \thref{coeffs2} for the Hanson-Petridis coefficients and the complete homogeneous polynomial identity in \thref{homo}. \thref{homo} is known in the literature as Sylvester's identity - see \cite{nica2023identity} for several different proofs, as well as a discussion.

\begin{proof}[Proof of \thref{generalsums}.]
We first prove the first identity by induction on $t$. When $t=0$, we necessarily have $k\geq0$, and
\[S_{k,0}\bra{B}=\sum_{b_0\in B}b_0^k,\]
which is precisely the required identity.
\smallbreak
Now suppose that $t\geq1$, and assume that the result holds with $t-1$ in place of $t$. Let $q=k+t$, so that $q\geq0$. For pairwise distinct $b_0,\ldots,b_t$, the partial-fraction identity
\[\frac{1}{\prod_{i=1}^t\bra{b_0-b_i}}=\sum_{i=1}^t\frac{1}{b_0-b_i}\frac{1}{\prod_{j\in[t]\setminus\{i\}}\bra{b_i-b_j}}\]
holds. After summing over all pairwise distinct $\bra{t+1}$-tuples, each of the $t$ terms on the right gives the same contribution. Therefore
\[S_{k,t}\bra{B}=t\sum^*_{B^{t+1}}\frac{b_0^q}{\bra{b_0-b_1}\prod_{i=2}^t\bra{b_1-b_i}}.\]

Writing
\[b_0^q=\bra{b_0^q-b_1^q}+b_1^q\]
and swapping $b_0$ and $b_1$ in the sum arising from the second term, we obtain
\[(t+1)S_{k,t}\bra{B}=t\sum^*_{B^{t+1}}\frac{b_0^q-b_1^q}{\bra{b_0-b_1}\prod_{i=2}^t\bra{b_1-b_i}}.\]
When $q=0$, we have $k=-t<0$, so both sides of the first identity are zero. We may therefore assume that $q\geq1$. Using
\[\frac{b_0^q-b_1^q}{b_0-b_1}=\sum_{r=0}^{q-1}b_0^r b_1^{q-1-r},\]
we obtain
\[(t+1)S_{k,t}\bra{B}=t\sum_{r=0}^{q-1}\sum_{b_0\in B}b_0^r\sum^*_{\bra{B\setminus\{b_0\}}^t}\frac{b_1^{q-1-r}}{\prod_{i=2}^t\bra{b_1-b_i}}.\]
The innermost sum is
\[S_{k-r,t-1}\bra{B\setminus\{b_0\}},\]
since
\[\bra{k-r}+\bra{t-1}=q-1-r.\]
Moreover, because $0\leq r\leq q-1$, we have
\[k-r\geq1-t=-(t-1),\]
so the inductive hypothesis applies. It gives
\[tS_{k-r,t-1}\bra{B\setminus\{b_0\}}=\sum_{\substack{i_1+\cdots+i_t=k-r\\i_1,\ldots,i_t\geq0}}\sum^*_{\bra{B\setminus\{b_0\}}^t}
b_1^{i_1}\cdots b_t^{i_t},\]
where the right-hand side is zero when $k-r<0$.
\smallbreak
Substituting this identity, we obtain
\[(t+1)S_{k,t}\bra{B}=\sum_{r=0}^{q-1}\sum_{b_0\in B}b_0^r\sum_{\substack{i_1+\cdots+i_t=k-r\\i_1,\ldots,i_t\geq0}}
\sum^*_{\bra{B\setminus\{b_0\}}^t}b_1^{i_1}\cdots b_t^{i_t}\]
\[=\sum_{\substack{i_0+\cdots+i_t=k\\i_0,\ldots,i_t\geq0}}\sum^*_{B^{t+1}}b_0^{i_0}\cdots b_t^{i_t}.\]
Indeed, the terms with $r>k$ vanish, while the remaining terms correspond exactly to the choices $i_0=r$ in a composition of $k$. This proves the first identity, including the case $k<0$.
\smallbreak
For the second identity, group the ordered tuples according to their underlying unordered sets. For each set 
\[C=\{b_0,\ldots,b_t\}\sub B\]
of size $t+1$, there are $(t+1)!$ possible orderings, and for each ordering
\[\sum_{\substack{i_0+\cdots+i_t=k\\i_0,\ldots,i_t\geq0}} b_0^{i_0}\cdots b_t^{i_t}=h_k\bra{C}.\]
The value is independent of the chosen ordering. Hence
\[\sum_{\substack{i_0+\cdots+i_t=k\\i_0,\ldots,i_t\geq0}}\sum^*_{B^{t+1}}b_0^{i_0}\cdots b_t^{i_t}=(t+1)!\sum_{\substack{C\sub B\\\abs{C}=t+1}}h_k\bra{C},\]
as required.
\end{proof}

We now apply \thref{generalsums} when the number of variables is the cardinality of the underlying set, to prove the following, from which we can easily deduce \thref{coeffs2,homo}.

\begin{lemma}\thlabel{interpolationcoefficients}
Let $S\subsetneq\F_p$ have size $r$, and for each $s\in S$ define
\[\lambda_S\bra{s}:=\frac{1}{\prod_{t\in S\setminus\{s\}}\bra{s-t}}.\]
Then, for every integer $j\geq0$,
\[\sum_{s\in S}s^j\lambda_S\bra{s}=
\begin{cases}
0,&0\leq j\leq r-2,\\
h_{j-r+1}\bra{S},&j\geq r-1.
\end{cases}\]
\end{lemma}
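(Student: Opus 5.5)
The plan is to derive \thref{interpolationcoefficients} from \thref{generalsums} with $t=r-1$, where the number of distinct entries equals $\abs{S}$.

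Let $B=S$ and $t=r-1$, and let $j\geq0$. Put $k=j-t=j-r+1$, so $k\geq -t$ and
\[S_{k,t}\bra{S}=\sum^*_{S^{r}}\frac{b_0^{j}}{\prod_{i=1}^{r-1}\bra{b_0-b_i}}.\]
Every ordered $r$-tuple of pairwise distinct elements of $S$ is a permutation of $S$. Fix $b_0=s$. The remaining entries $b_1,\ldots,b_{r-1}$ run over the $(r-1)!$ orderings of $S\setminus\{s\}$, and for each ordering the denominator is $\prod_{t\in S\setminus\{s\}}\bra{s-t}$. Hence
\[S_{k,t}\bra{S}=(r-1)!\sum_{s\in S}s^j\lambda_S\bra{s}.\]

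Now apply the second identity of \thref{generalsums}. The only subset $C\sub S$ with $\abs{C}=r$ is $S$ itself, so
\[rS_{k,t}\bra{S}=r!\,h_k\bra{S}.\]
This gives $(r-1)!\sum_{s}s^j\lambda_S(s)=(r-1)!\,h_{j-r+1}\bra{S}$. With the convention $h_k=0$ for $k<0$, the right-hand side vanishes exactly when $0\leq j\leq r-2$.

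The main obstacle is dividing by $(r-1)!$ in $\F_p$. We need $p\nmid (r-1)!$, i.e.\ $r\leq p$, and this holds since $S\subsetneq\F_p$ gives $r\leq p-1$.

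A second, subtler point is that the first identity of \thref{generalsums} was stated with the factor $(t+1)$ on the left. Passing to the $h_k$ form divides $(t+1)!$ by $(t+1)$, which needs $t+1=r\neq0$ in $\F_p$. This also holds since $r<p$. Alternatively, I would compare the two forms of \thref{generalsums} directly: the compositional sum equals $r!\,h_k(S)$, so $r\,(r-1)!\sum_s s^j\lambda_S(s)=r!\,h_k(S)$, and cancelling $r!$ (invertible since $r<p$) finishes the proof.

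Finally, the empty-product convention covers $r=1$.
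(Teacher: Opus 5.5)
Your proof is correct and follows essentially the same route as the paper. Both apply \thref{generalsums} with $B=S$, $t=r-1$, $k=j-r+1$, count the $(r-1)!$ orderings of $S\setminus\{s\}$, use that $S$ is its only $r$-element subset (with $h_k=0$ for $k<0$), and cancel $r!$, which is invertible since $r\le p-1$. Your ``second, subtler point'' is unnecessary: the second identity of \thref{generalsums} keeps the factor $t+1$ on the left, so no division is needed to pass between the two forms.
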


\begin{proof}
Apply \thref{generalsums} with
\[B=S,\qquad t=r-1,\qquad k=j-r+1.\]
Since $j\geq0$, we have
\[k\geq1-r=-t,\]
so the lemma applies.
\smallbreak
For a fixed $s\in S$, there are $(r-1)!$ ways to order the remaining elements of $S$. Hence
\[S_{j-r+1,r-1}\bra{S}=(r-1)!\sum_{s\in S}\frac{s^j}{\prod_{t\in S\setminus\{s\}}\bra{s-t}}=(r-1)!\sum_{s\in S}s^j\lambda_S\bra{s}.\]
If $0\leq j\leq r-2$, then $k<0$, and \thref{generalsums} gives
\[rS_{j-r+1,r-1}\bra{S}=0.\]
Since $r!$ is nonzero in $\F_p$, it follows that
\[\sum_{s\in S}s^j\lambda_S\bra{s}=0.\]

If $j\geq r-1$, then $k\geq0$. Since $S$ is the only subset of itself having cardinality $r$, \thref{generalsums} gives
\[rS_{j-r+1,r-1}\bra{S}=r!h_{j-r+1}\bra{S}.\]
Using the preceding expression for $S_{j-r+1,r-1}\bra{S}$ and cancelling $r!$, we obtain
\[\sum_{s\in S}s^j\lambda_S\bra{s}=h_{j-r+1}\bra{S}.\]
This proves the result.

\end{proof}

\thref{coeffs2,homo} are now almost immediate corollaries of \thref{interpolationcoefficients}.

\begin{proof}[Proof of \thref{coeffs2}.]
Let $\abs{S}=r$. By \thref{interpolationcoefficients}, the coefficients $\lambda_S\bra{s}$ satisfy
\[\sum_{s\in S}s^j\lambda_S\bra{s}=
\begin{cases}
0,&0\leq j\leq r-2,\\
1,&j=r-1.
\end{cases}\]
The coefficients $c_S\bra{s}$ are defined as the unique solution to this system. Uniqueness follows from the invertibility of the
Vandermonde matrix associated with the distinct elements of $S$. Therefore
\[c_S\bra{s}=\lambda_S\bra{s}
=\frac{1}{\prod_{t\in S\setminus\{s\}}\bra{s-t}},\]
as required.
\end{proof}

\begin{proof}[Proof of \thref{homo}.]
Let $S\sub\F_p$ have cardinality $r<p$, and let $m\geq0$. Applying \thref{interpolationcoefficients} with $j=m+r-1$ gives
\[\sum_{s\in S}c_S\bra{s}s^{m+r-1}=h_m\bra{S},\]
which is the desired identity.
\end{proof}

\printbibliography
\end{document}